\newcommand{\DS}{\displaystyle}
\newcommand{\R}{\begin{ensuremath}	\mathbb R\end{ensuremath}} 
\newcommand{\N}{\begin{ensuremath}	\mathbb N\end{ensuremath}} 
\newcommand{\Bin}{\operatorname{Bin}}
\renewcommand{\d}{\begin{ensuremath}\,\mathrm d\end{ensuremath}} 
\newcommand{\del}{\partial}
\newcommand{\E}{\begin{ensuremath}	\mathbb E\end{ensuremath}} 
\renewcommand{\P}{\begin{ensuremath}\mathbb P\end{ensuremath}} 
\newcommand{\eps}{\varepsilon}
\newcommand{\match}{\operatorname{match}}
\newcommand{\sgn}{\operatorname{sgn}}
\newcommand{\Var}{\operatorname{Var}}
\newcommand{\kp}{k^\oplus}
\newcommand{\km}{k^\ominus}
\newcommand{\alpham}{\alpha^\ominus}
\newcommand{\alphap}{\alpha^\oplus}
\newtheorem{theorem}{Theorem}
\newtheorem{definition}[theorem]{Definition}
\newtheorem{remark}[theorem]{Remark}
\newtheorem{corollary}[theorem]{Corollary}
\newtheorem{observation}[theorem]{Observation}
\renewcommand{\d}{\mathrm{d}}
\newcommand{\db}[1]{\textcolor{red}{#1}}
\renewcommand{\db}[1]{#1}
\begin{document}
\title{Probabilistic Estimates of the Maximum Norm \\
       of Random Neumann Fourier Series}
\author{Dirk Blömker\footnote{Universität Augsburg,
86135 Augsburg, Germany, {\tt dirk.bloemker@math.uni-augsburg.de}} \and
Philipp Wacker\footnote{Universität Augsburg,
86135 Augsburg, Germany, {\tt phkwacker@gmail.com}} \and  
Thomas Wanner\footnote{George Mason University, Fairfax VA 22030, USA, {\tt twanner@gmu.edu}}}
\date{\today}
\maketitle

\begin{abstract}
We study the maximum norm behavior of $L^2$-normalized random
Fourier cosine series with a prescribed large wave number. Precise
bounds of this type are an important technical tool in estimates for
spinodal decomposition, the celebrated phase separation phenomenon in metal
alloys. We derive rigorous asymptotic results as the wave number converges to
infinity, and shed light on the behavior of the maximum
norm for medium range wave numbers through numerical simulations.
Finally, we develop a simplified model for describing the magnitude
of extremal values of random Neumann Fourier series. The model describes key
features of the development of maxima and can be used to predict them.
This is achieved by decoupling magnitude and sign distribution,
where the latter plays an important role for the study of the size of
the maximum norm. Since we are considering series with Neumann boundary
conditions, particular care has to be placed on understanding the behavior
of the random sums at the boundary.

\end{abstract}

\newpage 

\tableofcontents
\section{Introduction}
Random series of functions play a significant role in many
branches of mathematics and have been studied extensively.
Of particular interest in a number of applications is the problem
of estimating the maximum norm of random series. For example, in
quantum chaos applications the maximum norm of the eigenfunctions
of the Laplacian on bounded domains is a measure for localization
effects, and it was shown in~\cite{aurich:etal:99a} that one can
estimate these norms of deterministic eigenfunctions through random
superpositions of plane waves and using methods due
to Kahane~\cite{kahane:85a}.

This interplay between stochastic techniques and deterministic
applications can also be seen in other contexts. Consider for 
example one of the standard models for phase separation in binary
alloys which is due to Cahn and Hilliard~\cite{cahn:59a, cahn:hilliard:58a}.
They proposed the fourth-order parabolic partial differential equation
\begin{equation} \label{ch}
  \del_t u = -\Delta(\eps^2\Delta u + h(u))
  \qquad\text{in } G \; ,
\end{equation}
subject to homogeneous Neumann boundary conditions $\del_\nu u =
\del_\nu \Delta u = 0$ on $\del G$, and for some sufficiently
smooth domain $G \subset \R^d$. In this model, the unknown
function~$u$ is an order parameter which represents the concentration
difference of the two alloy components, i.e., values of~$u$ close
to~$\pm 1$ represent the pure components, while values in between 
correspond to mixtures, with $u = 0$ implying equal concentrations
of both components. Moreover, the small parameter $\eps > 0$ is
a measure for interaction length
\db{which is usually on an atomistic length scale}, and the nonlinearity is the negative
derivative of a double-well potential. 
\db{A typical example is
$h(u) = u - u^3$, while the $h$ in original work of Cahn and Hilliard had logarithmic poles.}

If one observes the evolution of the Cahn-Hilliard model originating at
some almost constant homogeneous state $u(0,\cdot) \approx m$, and if
the initial concentration difference~$m$ satisfies the condition
$h'(m) > 0$, then it is well-known that
\db{provided sufficiently small $\epsilon>0$} 
(\ref{ch}) exhibits spontaneous
phase separation through a process called spinodal decomposition. The
resulting dynamics of the phase variable~$u$ exhibits the formation
of complicated and intricate patterns, which are generated by the 
local convergence of the function values of~$u$ to~$\pm 1$, while at
the same time keeping the number of separating interfaces as small
as possible, see for example~\cite{bloemker:etal:05a} and the references
therein.
From a mathematical point of view, spinodal decomposition in the
classical Cahn-Hilliard model~(\ref{ch}) has been studied in a series
of papers~\cite{maier:wanner:98a, maier:wanner:00a, sander:wanner:99a,
sander:wanner:00a} through deterministic methods, and they provide
an explanation for both the observed complicated patterns and their
generation. In particular, it is shown that the Cahn-Hilliard equation
exhibits surprising linear behavior even far from the constant stationary
state $u \equiv m$, and~\cite{sander:wanner:99a, sander:wanner:00a}
provide lower bounds for the region of linear behavior. Unfortunately,
however, these lower bounds turn out to be suboptimal.

It was shown in~\cite{wanner:04a} that optimal lower bounds can
be obtained, if instead of the deterministic estimates used
in~\cite{sander:wanner:99a, sander:wanner:00a} one employs a
probabilistic approach. More precisely, the suboptimality of the
deterministic results is due to the possibility of large ratios
between the maximum norm and the $L^2(G)$-norm of functions
representing spinodally decomposed patterns, since the deterministic
approach needs to incorporate the value of these ratios for all
possible patterns. In practice, however, the ratios are reasonably
small, and by studying random Fourier series in combination with
randomly chosen initial conditions for the deterministic
problem~(\ref{ch}) one can show that for ``typical'' initial
conditions linear behavior prevails up to much larger distances
from the homogeneous state. For more details,
see~\cite{desi:sander:wanner:06a, wanner:04a}.

As a model, the deterministic Cahn-Hilliard equation~(\ref{ch})
ignores thermal fluctuations which are present in any material.
This can be resolved by adding a stochastic additive term, see
for example~\cite{cook:70a, langer:71a}, and leads to the stochastic
Cahn-Hilliard-Cook model
\begin{equation} \label{chc}
  \del_t u = -\Delta(\eps^2\Delta u + h(u)) +
    \del_t W
    \qquad \text{in } G \; ,
\end{equation}
which is again considered subject to Neumann boundary
conditions $\del_\nu u = \del_\nu \Delta u = 0$ on~$\del G$,
and for some sufficiently smooth domain $G \subset \R^d$.
While the nonlinearity~$h$ and the interaction parameter~$\eps$
are as before, the additive noise term~$\del_t W$ is the derivative
of a small $Q$-Wiener process~$W$, which will be described in
more detail below. 
\db{Ideally, one would expect space-time white noise with a 
small noise strength, which is on the order of an atomistic length scale, too.}
For a survey of the phase separation dynamics
of the Cahn-Hilliard-Cook model~(\ref{chc}) see for
example~\cite{bloemker:etal:05a}.

Spinodal decomposition can also be observed in the stochastic
Cahn-Hilliard model, and some of the above-mentioned results
could be extended to the case of~(\ref{chc}). More precisely,
in~\cite{bloemker:etal:01b} it was shown that results analogous
to~\cite{maier:wanner:98a, maier:wanner:00a} hold,
while~\cite{bloemker:etal:08a} generalizes the approach
of~\cite{sander:wanner:99a, sander:wanner:00a}. We would like
to stress that even though the basic explanation of spinodal
decomposition as a phenomenon driven by unexpectedly linear
behavior remains, the proof techniques used in the stochastic
setting are completely different.

Despite the above results, a complete description of spinodal
decomposition which generalizes the approach described
in~\cite{wanner:04a} to the stochastic case remains elusive,
and we now describe this somewhat surprising fact in more detail.
During spinodal decomposition, an initially flat surface
$u \approx m$ separates and closely follows the linearized
dynamics for unexpectedly large times. In the stochastic
setting, the linearized dynamics near the constant solution
$u \equiv m$ is described by the evolution equation
\begin{equation} \label{e:linSPDE}
  \del_t u = Au + \del_t W
  \qquad\text{in }G \; ,
\end{equation}
where the linearized operator is given by $A = -\eps^2 \Delta^2 -
h'(m) \Delta$, subject to Neumann boundary conditions and average
mass zero. Since~$m$ is constant,
this operator is self-adjoint and has a complete orthonormal 
system of eigenfunctions~$e_k \in L^2(G)$, for $k \in \N$,
with associated eigenvalues
\begin{equation} \label{e:dispersion}
  \lambda_k = \mu_k \left( h'(m) - \eps^2 \mu_k \right)
  \qquad\mbox{ for }\qquad
  k \in \N \; .
\end{equation}
One can easily see that the eigenfunctions~$e_k$ are
the eigenfunctions of the negative Laplacian subject
to homogeneous Neumann boundary conditions, with
corresponding ordered eigenvalues~$0 < \mu_1 \le \mu_2
\le \ldots \to \infty$. It is well known~\cite{daprato:zabczyk:14a}
that the solution of~(\ref{e:linSPDE}) starting at
zero is the stochastic convolution
\begin{equation} \label{stochconv}
  W_A(t) =
   \int_0^t e^{(t-s)A} \d W(s) =
  \sum_{k\in\N}\alpha_k \int_0^t e^{(t-s)\lambda_k}
    \d B_k(s) \cdot e_k \; ,
\end{equation}
where the second identity holds for independent Brownian
motions~$B_k$ if the $Q$-Wiener process has a joint eigenbase
with~$A$ such that $Qe_k = \alpha_k^2 e_k$, which 
\db{for simplicity of discussion} we assume
throughout this paper.

But why can the probabilistic method used in~\cite{wanner:04a}
not easily be applied in the stochastic setting? In the deterministic
case, we studied random initial conditions~$u_0$, which are selected 
in such a way that their ratio of maximum norm and $L^2(G)$-norm
is small. Then the solution of the linearized equation is
given by~$e^{tA}u_0$, and this allows us to obtain estimates 
on the norm ratios along the solution due to the differentiability
of the solution with respect to time. In contrast, in the stochastic
Cahn-Hilliard-Cook setting, the linearized solution always starts 
at zero, and the probabilistic aspects enter through the above
representation of the stochastic convolution --- and the previous
approach of bounding the norm ratios cannot easily be applied.

Motivated by the above discussion the present paper is concerned with
obtaining a better understanding of when random Fourier series
of the type given in~(\ref{stochconv}) exhibit small ratios 
between their maximum norm and their $L^2(G)$-norm. We
are interested in particular in characterizations which would
allow us to extend the results of~\cite{wanner:04a} to the 
stochastic partial differential equation case. More precisely,
we consider the following situation, which is based on the 
available spinodal decomposition explanations.

Return for the moment to the eigenvalue formula presented
in~(\ref{e:dispersion}). This so-called {\em dispersion relation\/}
shows how the eigenvalues~$\lambda_k$ of the linearized
Cahn-Hilliard operator can be computed from the
eigenvalues~$\mu_k \ge 0$ of the negative Laplacian subject
to homogeneous Neumann boundary conditions. If we define the
quadratic polynomial $p(s) = s \cdot (h'(m) - \eps^2 s)$, then
one clearly has $\lambda_k = p(\mu_k)$ for all $k \in \N$.
We would like to point out that the polynomial~$p$ is positive
between its two zeros at~$s = 0$ and~$s = h'(m) / \eps^2$,
i.e., any value of~$\mu_k$ in this interval gives rise to a
positive eigenvalue~$\lambda_k > 0$. These positive eigenvalues 
are of course responsible for the instability of the homogeneous
state, and in fact, the {\em most positive eigenvalues\/}~$\lambda_k$
are the driving force for pattern formation during spinodal
decomposition. One can readily see that the quadratic polynomial~$p$
achieves its maximum $\lambda_{\max} = h'(m)^2 / (4 \eps^2)$
at $s = h'(m) / (2 \eps^2)$, and therefore superpositions of the
eigenfunctions~$e_k$ which correspond to values $\mu_k \approx
h'(m) / (2 \eps^2)$ accurately describe the microstructures 
observed during phase separation, see again~\cite{maier:wanner:98a,
maier:wanner:00a, sander:wanner:99a, sander:wanner:00a}. We now
choose a constant~$0 \ll \gamma < 1$ and define
\begin{displaymath}
  \Lambda := \left\{ k \in \N : \lambda_k > \gamma
    \lambda_{\max} \right\} \; ,
  \quad\mbox{ where }\quad
  \lambda_{\max} = \frac{h'(m)^2}{4 \eps^2} \; .
\end{displaymath}
Since the eigenvalues~$\mu_k$ are ordered by their size, there
exist suitable integers $1 \le \km \le \kp$ such that~$\Lambda$
can be rewritten as
\begin{equation} \label{eq:deflambda:kpm}
  \Lambda = \left\{ k \in \N : \km \le k \le \kp \right\}
  \qquad\mbox{ with }\qquad
  \kp - \km \sim \eps^{-d}
  \quad\mbox{ as }\quad
  \eps \to 0 \; ,
\end{equation}
where the last proportionality is due to standard results on the
asymptotic distribution of Laplacian eigenvalues on bounded
domains~$G \subset \R^d$, see for example~\cite{courant:hilbert:53a}.
In~\cite{maier:wanner:98a, maier:wanner:00a, sander:wanner:99a,
sander:wanner:00a}, the finite-dimensional function space which is
spanned by the eigenfunctions~$e_k$ for $k \in \Lambda$ is called
the {\em dominating subspace\/}, and the functions in this
space exhibit the characteristic patterns which are observed during
spinodal decomposition. Furthermore, it was shown in these papers that
solutions of the Cahn-Hilliard model which originate close to the
homogeneous state are very likely to stay close to the dominating subspace,
and~\cite{wanner:04a} uncovered that most functions in the dominating
subspace exhibit small maximum-$L^2(G)$-norm ratios. As mentioned
before, this is the principal reason for the unexpectedly linear behavior
observed during spinodal decomposition.

Based on the above discussion, the present paper focuses on the behavior
of the stochastic convolution in the invariant dominating subspace. More
precisely, let~$P_\Lambda : L^2(G) \to L^2(G)$ denote the
orthogonal projection onto the dominating subspace, then we have
\begin{displaymath}
  P_\Lambda W_A(t) = \sum_{k \in \Lambda} \alpha_k c_k \cdot e_k
  \quad\text{with}\quad
  c_k = \int_0^t e^{(t-s)\lambda_k} \d B_k(s) \; .
\end{displaymath}
Note that due to It\=o's isometry the random variables~$c_k$ 
\db{for $k\in \Lambda$ are real-valued}
Gaussian random variables with mean zero and variance
\begin{displaymath}
  \mathbb{E}c_k^2= \int_0^t e^{(t-s)2\lambda_k}ds 
  = \frac1{2\lambda_k}(1-e^{-2\lambda_k t}) 
  \approx \frac1{2\lambda_k}
  \approx \frac{2 \eps^2}{h'(m)^2}
\end{displaymath}
for times 
\db{$t\gg 1/\lambda_{\max}$ which on the order $\eps^2$.  Here}
we have used the fact
that~$\lambda_k>0$ for all eigenvalues which correspond
to indices in~$\Lambda$ and that~$\gamma$ is close to one.
\db{Moreover, we used that spinodal decomposition usually 
happens on a time-scale of order $\eps^2\ln(\eps^{-1})$, see \cite{bloemker:etal:01b}.}

If we assume that the noise process acts on each of these
modes with the same intensity, then also the constants~$\alpha_k$
are of the same size. After normalization, in the remainder of
this paper we therefore study random sums of the form
\begin{displaymath}
  f(x) = \sum_{k \in \Lambda} c_k e_k(x)\;,
\end{displaymath}
where the coefficients~$c_k$ are independent and identically
distributed standard Gaussian random variables with mean zero
and variance one. Our point of view is that sums of this form
can act as a surrogate for the mild solution of the linearized
Cahn-Hilliard-Cook equation in the dominant subspace. For the
random functions~$f$, we study the size of their $L^\infty(G)$-norms
in relation to their $L^2(G)$-norms. While the above simplification
removes the time dependence from the problem, our study focuses
on understanding the maximum norm behavior of~$f$ in a way which
we believe will allow for a straightforward inclusion of time later
on. More precisely, we will show that
\begin{displaymath}
  \P \left( \frac{\| f \|_{L^\infty(G)}}
                 {\| f \|_{L^2(G)}} <
    C \cdot \log \eps^{-1}\right)
  \xrightarrow{\eps\to 0} 1 \; ,
\end{displaymath}
yet in doing so we will shed light on the actual mechanism that
controls the size of the maximum norm. This is accomplished through
a mixture of analysis, modeling, and numerical simulations. In order
to keep the presentation simple, much of the paper concentrates on
the one-dimensional case $d = 1$, although we do address extensions
to higher dimensions as well.

The remainder of the paper is organized as follows. In
Section~\ref{sec:brute} we introduce the specific one-dimensional
setting that is used for most of the paper. In addition, we obtain a
first crude estimate for the asymptotic behavior of the maximum norm
of~$f$ as~$\eps \to 0$ through purely probabilistic means. While this
result will provide a first step, it is indirect in nature and does not
explain exactly how the maximum norms are generated. This question is
addressed in Section~\ref{sec:forcing}, where we study the effect of
the signs of the random coefficients~$c_k$ in the definition of~$f$ on
the maximum norm. For this, we will have to treat the boundary and the
interior of the domain~$G$ separately. We show that only equal
signs force the worst-case norm behavior, which of course is an extremely
rare event. Finally, in Section~\ref{sec:modelextreme} we try to explain
how local extrema are generated, and how this relates to matchings
between the signs of the eigenfunctions~$e_k$ and their respective random
coefficients~$c_k$. In addition, we introduce a simplified model which
exhibits the properties of the random function~$f$ in relation to the
generation of local extreme values. This in turn leads to an intuitive
explanation of the behavior of maximum norms of the random functions~$f$.
The section closes with generalizations to higher dimensions.
%
%
%
\section{Moment-Based Probabilistic Bounds}
\label{sec:brute}
%
%
%
This section lays the groundwork for our study of the maximum
norms of normalized random functions subject to Neumann boundary
conditions. In addition to introducing our precise setup, we
provide some intuition into the norm ratio behavior. We then review
indirect probabilistic approaches for estimating the ratio.
\subsection{Random Fourier Cosine Sums}
Beginning with this section, we consider only the
one-dimensional special case $G = [0,1]$. Furthermore,
we consider the Cahn-Hilliard-Cook model with total mass $m = 0$,
i.e., the identity $h'(m) = 1$ holds. In this situation, the
eigenfunctions of the negative Laplacian are cosines with varying
wave numbers, and one can easily see that after $L^2(0,1)$-normalization
they are given by $e_k(x) = \sqrt{2} \cos(k\pi x)$, with associated
eigenvalues $\mu_k = k^2 \pi^2$ for $k \in \N$. Notice that the constant
eigenfunction is excluded from consideration, since the Cahn-Hilliard
model is usually studied on function spaces which respect the mass
constraint. For more details we refer the reader to~\cite{maier:wanner:98a,
maier:wanner:00a, sander:wanner:99a, sander:wanner:00a}. One can easily
see that the dispersion relation~(\ref{e:dispersion}) now takes the
form
\begin{displaymath}
  \lambda_k = k^2 \pi^2 \left( h'(m) -\eps^2 k^2 \pi^2 \right) =
  k^2 \pi^2 -\eps^2 k^4 \pi^4
  \qquad\mbox{ for }\qquad
  k \in \N \; ,
\end{displaymath}
which provides a direct link between the wave number~$k$ of the
eigenmode~$e_k$ and the associated eigenvalue~$\lambda_k$ of the
linearization~$A$ of the Cahn-Hilliard equation. Furthermore, a
simple calculation shows that the index set for the dominating
subspace is given by
\begin{equation} \label{e:defalphapm}
  \Lambda = \left\{ \km, \ldots, \kp \right\} =
  \left\{ \left\lceil \frac{\alpham}{\eps} \right\rceil \, , \;
    \ldots , \; \left\lfloor \frac{\alphap}{\eps} \right\rfloor
    \right\} \; ,
  \quad\text{ where }\quad 
  \alpha^{\oplus,\ominus} = \sqrt{\frac{1\pm\sqrt{1-\gamma}}{2\pi^2}}
    \; .
\end{equation}
As a model for the projected stochastic convolution~$P_\Lambda W_A(t)$
in the dominating subspace we consider random weighted sums of the cosine
basis functions whose wave numbers lie in~$\Lambda$. More precisely, we
consider the following setting.
\begin{definition} \label{def:deff}
For $\gamma \in (0,1)$ let $\Lambda = \{\km, \ldots, \kp\} = \left\{\lceil \alpham
/ \eps \rceil, \ldots, \lfloor \alphap / \eps\rfloor\right\}$ denote the index
set of dominating wave numbers defined in~(\ref{e:defalphapm}). Furthermore,
let~$c_k$ for $k\in\Lambda$ denote a family of independent and identically
distributed standard normal random variables. Then we define a {\em random Fourier
cosine sum\/} $f : [0,1] \to \R$ via
\begin{equation}\label{eq:fnc}
  f(x) = \sum_{k \in \Lambda} c_k \sqrt{2} \cos(k \pi x) 
       = \sum_{k \in \Lambda} c_k e_k(x) \; .
\end{equation}
Notice that the basis functions~$e_k$ are orthonormal in $L^2(0,1)$.
\end{definition}
Our goal is to understand the relation between typical maximum norm values
of functions~$f$ as in~(\ref{eq:fnc}) and their $L^2(0,1)$-norms. As mentioned
in the introduction, part of this study will be rigorous, while other parts 
will be numerical in nature. For the numerical simulations in the remainder
of this paper, unless otherwise noted, we assume that
\begin{displaymath}
  \gamma = 0.8 \; ,
  \qquad\mbox{ and therefore }\qquad
  \alpha^\ominus \approx 0.16735
  \quad\mbox{ and }\quad
  \alpha^\oplus \approx 0.27077 \; ,
\end{displaymath}
and we generally pick the $\eps$-values listed in Table~\ref{tab:modes}.
In this table, we also list the values of~$\km$ and~$\kp$ for each of these
cases, as well as the dimension~$|\Lambda|$ of the dominating subspace.
The final column will be discussed in more detail later on.
\begin{table}
  \centering
  \begin{tabular}{c|cccc|c}
    $r$ & $\eps = 10^{-r}$ & $\km$ & $\kp$ & $|\Lambda|$ &
      $2 \sqrt{|\Lambda| / \pi}$\\\hline
    2.0 & 0.01 & 17 & 27 & 11 & 3.7424 \\
    2.5 & 0.003162 & 53 & 85 & 33 & 6.4820\\
    3.0 & 0.001 & 168 & 270 & 103 & 11.4518 \\
    3.5 & 0.0003162 & 530 & 856 & 327 & 20.4046 \\
    4.0 & 0.0001 & 1674 & 2707 & 1034 & 36.2840
  \end{tabular}
  \caption{Simulation parameters used throughout the paper. For the shown
    values of~$\eps$, the table lists the bounds~$\km$ and~$\kp$ of the index 
    set~$\Lambda$ defined in~(\ref{e:defalphapm}), as well as its size.
    The last column will be explained in more detail later.}
  \label{tab:modes}
\end{table}
\begin{figure}
  \centering
  \scalebox{0.7}{
  \setlength{\unitlength}{1pt}
  \begin{picture}(0,0)
  \includegraphics{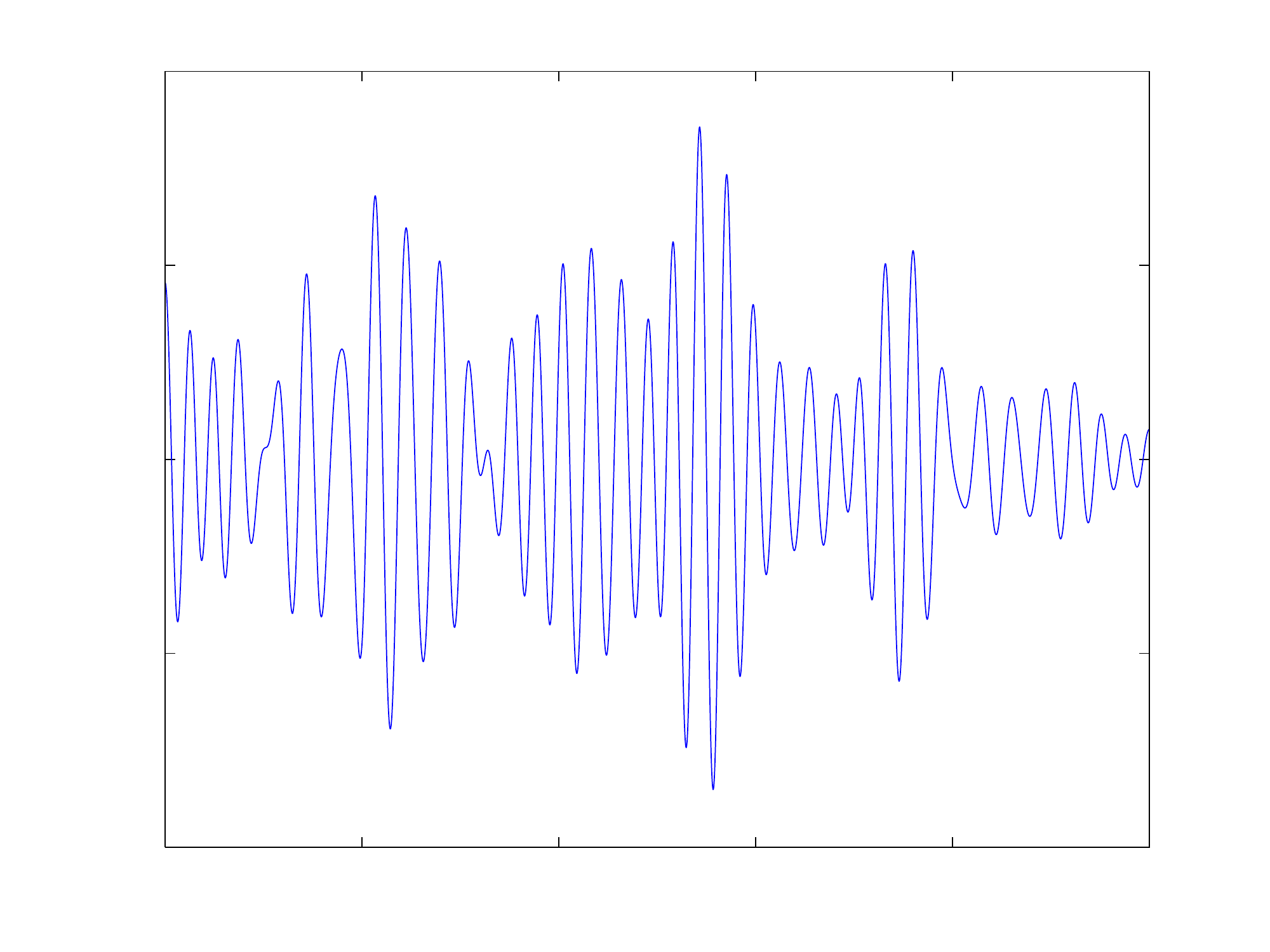}
  \end{picture}%
  \begin{picture}(576,432)(0,0)
  \fontsize{20}{0}
  \selectfont\put(74.88,42.5189){\makebox(0,0)[t]{\textcolor[rgb]{0,0,0}{{0}}}}
  \fontsize{20}{0}
  \selectfont\put(164.16,42.5189){\makebox(0,0)[t]{\textcolor[rgb]{0,0,0}{{0.2}}}}
  \fontsize{20}{0}
  \selectfont\put(253.44,42.5189){\makebox(0,0)[t]{\textcolor[rgb]{0,0,0}{{0.4}}}}
  \fontsize{20}{0}
  \selectfont\put(342.72,42.5189){\makebox(0,0)[t]{\textcolor[rgb]{0,0,0}{{0.6}}}}
  \fontsize{20}{0}
  \selectfont\put(432,42.5189){\makebox(0,0)[t]{\textcolor[rgb]{0,0,0}{{0.8}}}}
  \fontsize{20}{0}
  \selectfont\put(521.28,42.5189){\makebox(0,0)[t]{\textcolor[rgb]{0,0,0}{{1}}}}
  \fontsize{20}{0}
  \selectfont\put(69.8755,47.52){\makebox(0,0)[r]{\textcolor[rgb]{0,0,0}{{-20}}}}
  \fontsize{20}{0}
  \selectfont\put(69.8755,135.54){\makebox(0,0)[r]{\textcolor[rgb]{0,0,0}{{-10}}}}
  \fontsize{20}{0}
  \selectfont\put(69.8755,223.56){\makebox(0,0)[r]{\textcolor[rgb]{0,0,0}{{0}}}}
  \fontsize{20}{0}
  \selectfont\put(69.8755,311.58){\makebox(0,0)[r]{\textcolor[rgb]{0,0,0}{{10}}}}
  \fontsize{20}{0}
  \selectfont\put(69.8755,399.6){\makebox(0,0)[r]{\textcolor[rgb]{0,0,0}{{20}}}}
  \end{picture}
  
  }
  \caption{A typical instance of a random Fourier cosine sum~$f$ as
    defined in Definition~\ref{def:deff}. For the image we chose
    the parameters $\eps=10^{-3}$ and $\gamma=0.8$.}
  \label{fig:typFnc}
\end{figure}

Random Fourier cosine sums as defined in Definition~\ref{def:deff}
usually exhibit highly oscillatory behavior for small values of~$\eps$,
since the wave numbers of all involved basis functions are of the
order~$1 / \eps$. In fact, a classical result due to Karlin~\cite{karlin:68a}
shows that every~$f$ as in~(\ref{eq:fnc}) contains between~$\km$ and~$\kp$
zeros. This is demonstrated in Figure~\ref{fig:typFnc}, where we show a
sample random Fourier cosine sum for~$\eps = 10^{-3}$. Notice that~$f$
exhibits fast oscillations at a frequency of order~$\eps$ with a slow
modulation. 
\db{A fundamental difference to many other studies of random Fourier sums is, 
that not only the number of terms increases with  $\eps\to0$, 
but also the functions over which the sum is taken changes.}

As a first step towards understanding the behavior of the maximum norm
of~$f$ in relation to its $L^2$-norm, we would like to point out that the
worst-case behavior can easily be determined, see also~\cite{wanner:04a}.
\begin{observation}[Worst-Case Behavior]
Consider an arbitrary random Fourier cosine sum as in
Definition~\ref{def:deff}. Since all cosines are uniformly
bounded by one, the Cauchy-Schwarz inequality immediately
yields the estimate
\begin{displaymath}
  \frac{\| f \|_{L^\infty(0,1)}}{\| f\|_{L^2(0,1)}} \le
  \frac{\sum_{k \in\Lambda} \sqrt{2} |c_k|}
       {\sqrt{\sum_{k\in\Lambda} c_k^2} } \le
  \frac{\sqrt{2} \cdot \sqrt{|\Lambda|} \cdot
       \sqrt{\sum_{k \in\Lambda} c_k^2}}
       {\sqrt{\sum_{k\in\Lambda} c_k^2} } =
  \sqrt{2 \cdot |\Lambda|} \sim \eps^{-1/2} \; .
\end{displaymath}
Moreover, one can easily see, for example by choosing all
coefficients~$c_k$ equal to one, that both of the above
inequalities can be turned into equalities. For this one only
has to notice that the cosines attain their maximum value at
the left interval endpoint $x = 0$, and therefore we have
$\| f \|_{L^\infty(0,1)} = f(0)$ whenever the
coefficients~$c_k$ are positive for all $k \in \Lambda$.
\end{observation}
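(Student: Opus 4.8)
The statement has two parts, an upper bound valid for every realization and a matching lower bound for one specific realization; both are elementary, so the plan is simply to carry out the two estimates carefully.

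For the upper bound I would proceed as follows. In the numerator, combine the triangle inequality for the supremum norm with the uniform bound $\|e_k\|_{L^\infty(0,1)}=\sqrt2$, which follows from $|\cos(k\pi x)|\le 1$; this gives $\|f\|_{L^\infty(0,1)}\le\sqrt2\sum_{k\in\Lambda}|c_k|$. Then apply the Cauchy--Schwarz inequality to the vectors $(1)_{k\in\Lambda}$ and $(|c_k|)_{k\in\Lambda}$ to obtain $\sum_{k\in\Lambda}|c_k|\le\sqrt{|\Lambda|}\,\bigl(\sum_{k\in\Lambda}c_k^2\bigr)^{1/2}$. In the denominator, orthonormality of the $e_k$ in $L^2(0,1)$ yields the exact identity $\|f\|_{L^2(0,1)}^2=\sum_{k\in\Lambda}c_k^2$. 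Dividing, the random factor $\bigl(\sum_{k\in\Lambda}c_k^2\bigr)^{1/2}$ cancels and one is left with the deterministic bound $\sqrt{2|\Lambda|}$. For the asymptotics, note that $|\Lambda|=\lfloor\alphap/\eps\rfloor-\lceil\alpham/\eps\rceil+1$ differs from $(\alphap-\alpham)/\eps$ by at most a bounded additive term, so $|\Lambda|\sim(\alphap-\alpham)\,\eps^{-1}$ and hence $\sqrt{2|\Lambda|}\sim\eps^{-1/2}$.

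For sharpness I would exhibit the realization $c_k\equiv 1$. Since all cosines satisfy $\cos(k\pi\cdot 0)=1$, we get $f(0)=\sqrt2\,|\Lambda|$, while $|f(x)|\le\sqrt2\,|\Lambda|$ for every $x$ because $|\cos(k\pi x)|\le 1$; therefore $\|f\|_{L^\infty(0,1)}=f(0)=\sqrt2\,|\Lambda|$. On the other hand $\|f\|_{L^2(0,1)}=\sqrt{|\Lambda|}$, so the ratio equals exactly $\sqrt{2|\Lambda|}$, which shows the bound is attained and that the triangle and Cauchy--Schwarz steps above become equalities for this realization. More generally, the identity $\|f\|_{L^\infty(0,1)}=f(0)$ persists whenever all $c_k>0$, since then $f(0)=\sqrt2\sum_{k\in\Lambda}|c_k|$ already saturates the triangle inequality.

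There is essentially no obstacle here: the result rests only on Cauchy--Schwarz together with the observation that every basis function attains its maximum simultaneously at the left endpoint $x=0$ --- exactly the phenomenon that the rest of the paper must understand more quantitatively. The only point requiring a trivial bit of care is the counting of $|\Lambda|$ via the floor and ceiling in its definition, which affects only the constant in the $\eps^{-1/2}$ scaling.
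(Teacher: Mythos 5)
Your proposal is correct and follows essentially the same route as the paper: the triangle inequality with the uniform bound $|\cos(k\pi x)|\le 1$, Cauchy--Schwarz against the constant vector, orthonormality for the $L^2$-norm, and the realization $c_k\equiv 1$ (with all cosines peaking simultaneously at $x=0$) to show both inequalities are attained. The extra care you take with the floor/ceiling count of $|\Lambda|$ is a harmless refinement of the paper's appeal to $|\Lambda|\sim\eps^{-1}$.
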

The observation shows that as we choose~$\eps$ closer and closer 
to zero, the norm ratio grows proportional to~$\eps^{-1/2}$. In fact,
the observation even provides explicit functions~$f$ for which this
asymptotic behavior is realized.
\begin{figure}[t]
  \centering
  \includegraphics[width=0.55\textwidth]{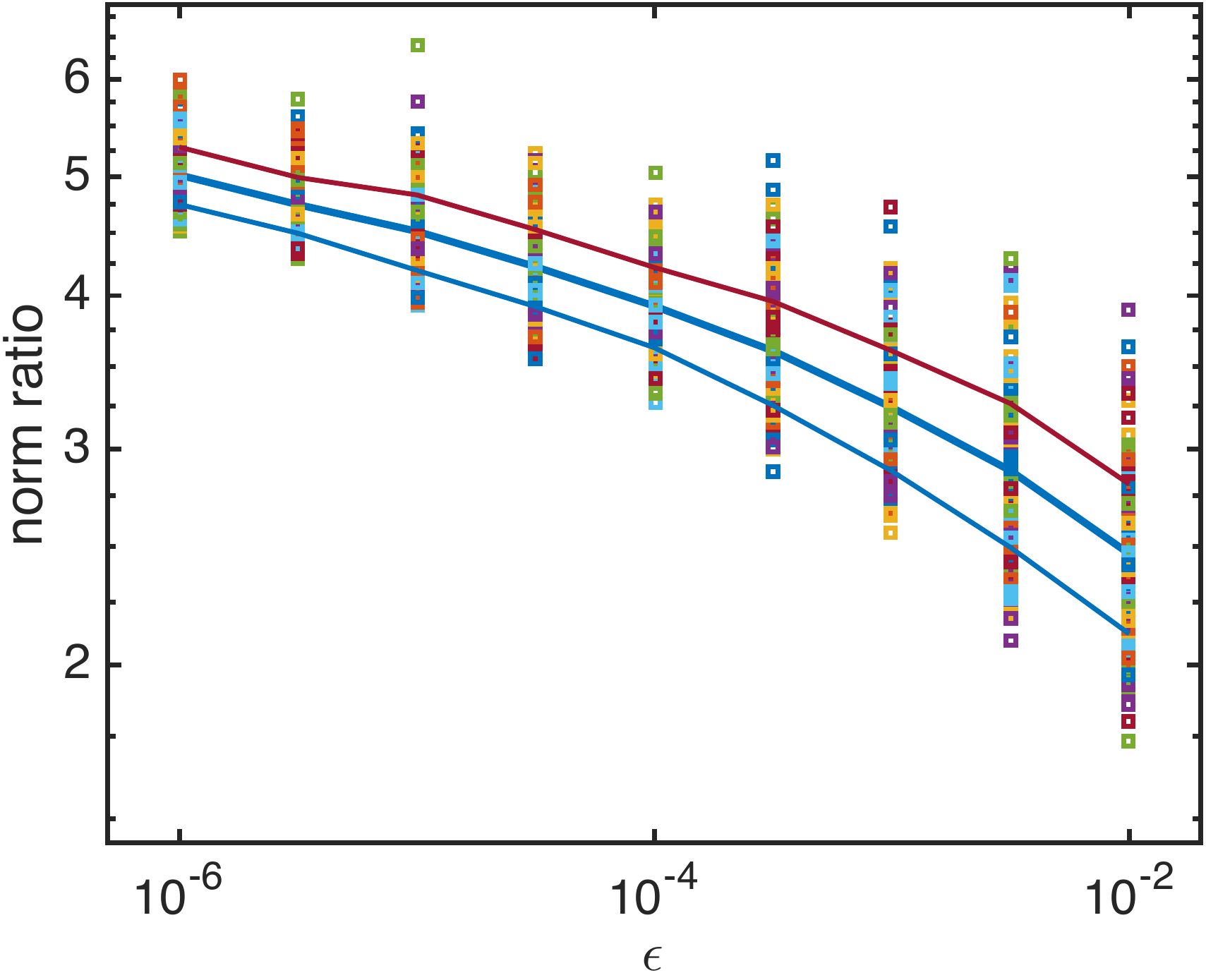}
  \caption{Monte Carlo simulations of random Fourier cosine sums for
           $\eps$-values between~$10^{-6}$ and~$10^{-2}$. In each case,
           the ratio of the maximum norm and the $L^2$-norm of the function~$f$
           is indicated by a square. The thick blue line shows the mean values
           of the simulations.}
  \label{fig:meanBehav}
\end{figure}

But what happens in the stochastic setting? How do the ratios behave
for typical instances of the random Fourier cosine sum? To gain some
intuition, we performed Monte Carlo type simulations for values
of~$\eps$ between~$10^{-6}$ and~$10^{-2}$, with sample size~$250$
in each case, and recorded the norm ratio. The results are shown
in Figure~\ref{fig:meanBehav}, which also depicts the expected value
of the simulations for each $\eps$-value in blue. This leads to the
following observation.
\begin{observation}[Typical Behavior]
For typical instances of the random Fourier cosine sum~$f$ introduced
in Definition~\ref{def:deff}, the norm ratio $\|f\|_{L^\infty(0,1)} /
\|f\|_{L^2(0,1)}$ appears to level off as~$\eps$ approaches zero.
In addition, the variances observed in the Monte Carlo simulations
seem to decrease as~$\eps$ approaches zero.
\end{observation}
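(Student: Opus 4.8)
Since this is an empirical observation drawn from the Monte Carlo data in Figure~\ref{fig:meanBehav}, rather than a formal theorem, in place of a proof I would lay out the heuristic that it reflects and that the remainder of the paper turns into rigorous estimates. The plan is to control the numerator and the denominator of the norm ratio separately, show that each concentrates, and conclude that the ratio concentrates around a quantity that grows only like $\sqrt{\log\eps^{-1}}$ — slow enough that over any realistic window of $\eps$ it looks as though it is leveling off.

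For the denominator, note that $\|f\|_{L^2(0,1)}^2 = \sum_{k\in\Lambda} c_k^2$ is $\chi^2$-distributed with $|\Lambda|$ degrees of freedom, so by the law of large numbers — or, quantitatively, by standard $\chi^2$ (Bernstein-type) concentration — it equals $(1+o(1))|\Lambda|$ with probability tending to one, with relative fluctuations of order $|\Lambda|^{-1/2}\sim\eps^{1/2}\to0$. Hence $\|f\|_{L^2(0,1)}\approx\sqrt{|\Lambda|}\sim\eps^{-1/2}$ with vanishing relative variance. For the numerator I would split $[0,1]$ into a boundary layer $[0,\delta\eps]\cup[1-\delta\eps,1]$ and the interior. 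At a fixed interior point $f(x)$ is centered Gaussian with $\Var f(x) = 2\sum_{k\in\Lambda}\cos^2(k\pi x) = |\Lambda| + \sum_{k\in\Lambda}\cos(2k\pi x)$, and the trigonometric remainder is $O(1)$ away from the endpoints, so $\Var f(x)\approx|\Lambda|$ uniformly there. Because $f$ is a trigonometric polynomial with frequencies up to $\kp\pi\sim\eps^{-1}$, it has on the order of $\eps^{-1}$ nearly decorrelated local extrema, and a union bound over a net of that size — made rigorous by a chaining argument together with the Bernstein inequality $\|f'\|_{L^\infty(0,1)}\le\kp\pi\,\|f\|_{L^\infty(0,1)}$ — yields $\max_{\text{interior}}|f|\le C\sqrt{|\Lambda|}\sqrt{\log\eps^{-1}}$ with high probability. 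In the boundary layer all cosines satisfy $\cos(k\pi x) = 1 + O((k\pi\delta\eps)^2) = 1 + O(\delta^2)$, so there $f$ is close to $\sqrt2\sum_{k\in\Lambda}c_k$, a single Gaussian of standard deviation $\sqrt{2|\Lambda|}$; the boundary thus contributes only an $O(\sqrt{|\Lambda|})$ term, which is smaller than the interior contribution. Combining, $\|f\|_{L^\infty(0,1)}\le C\sqrt{|\Lambda|}\sqrt{\log\eps^{-1}}$ with probability tending to one, and the dispersion of this maximum around its median is controlled by the Borell--Tsirelson--Ibragimov--Sudakov inequality, which bounds it by the maximal pointwise standard deviation $\sqrt{2|\Lambda|}$ — again small relative to the mean.

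Dividing the two estimates, the ratio is at most $C\sqrt{\log\eps^{-1}}$ with probability tending to one. This increases only by a modest factor over the four decades of $\eps$ shown in Figure~\ref{fig:meanBehav}, which is exactly why it appears to level off, and the concentration of both numerator and denominator forces the empirical variance of the ratio to shrink — this is the second half of the observation. The honest caveat that I would state explicitly is that no genuine limit exists: a Gaussian maximum over $\sim\eps^{-1}$ roughly independent cells each of standard deviation $\sim\sqrt{|\Lambda|}$ is necessarily of order $\sqrt{|\Lambda|\log\eps^{-1}}$, so the ratio grows like $\sqrt{\log\eps^{-1}}$, and the numerics are merely consistent with this very slow growth rather than with true boundedness. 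The crude bound $C\log\eps^{-1}$ established later in this section is a deliberately simple overestimate of it.

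The step I expect to be the main obstacle is the passage from the two one-sided estimates to a clean statement about the \emph{ratio}: its numerator and denominator are built from the same coefficients $c_k$ and are therefore correlated, so rather than analyze the quotient directly one should intersect the two high-probability events $\{\|f\|_{L^2(0,1)}^2\in(1\pm\eta)|\Lambda|\}$ and $\{\|f\|_{L^\infty(0,1)}\le C\sqrt{|\Lambda|\log\eps^{-1}}\}$. The remaining technical issues are pinning down the correct boundary-layer width $\delta\eps$, handling the genuinely non-stationary (narrow-band) structure of $f$ near $x=0$ where all modes align, and converting the ``$\eps^{-1}$ independent extrema'' heuristic into a rigorous covering/chaining estimate — precisely the bookkeeping that the remainder of Section~\ref{sec:brute} and the later sections carry out, where the boundary contribution will be isolated and treated on its own.
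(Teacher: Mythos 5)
This statement is an empirical observation drawn from the Monte Carlo data of Figure~\ref{fig:meanBehav}; the paper offers no proof of it, only the simulations themselves together with the explicit caveat (immediately after the observation) that one should expect slow logarithmic growth rather than true convergence. You correctly recognize this, and your heuristic is sound and consistent with everything the paper later establishes. It does, however, follow a genuinely different route from the one the paper takes. The paper backs up the observation in two stages: first a rigorous but crude $\eps^{-\delta}$ bound (Theorem~\ref{thm:main}) obtained via fractional Sobolev embeddings and moment estimates (Theorem~\ref{thm:brute}) combined with the $\chi^2$ lower tail for the $L^2$-norm, and then a $C\log\eps^{-1}$ bound (Theorem~\ref{thm:main2}) proved only for a \emph{simplified model} in which the roughly $\bar k\sim\eps^{-1}$ local extrema are replaced by independent Gaussians. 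You instead propose standard Gaussian-process machinery applied directly to $f$: $\chi^2$ concentration for the denominator, a net/chaining argument with the Bernstein inequality for trigonometric polynomials for the interior supremum, Borell--Tsirelson--Ibragimov--Sudakov for the fluctuations of the maximum, and a separate boundary-layer analysis near $x=0$ (which parallels Sections~\ref{sec:forcing} and~\ref{sec:L2}). Your route would, if carried out, yield the sharper order $\sqrt{\log\eps^{-1}}$ for the actual random sum $f$ rather than the model, whereas the paper trades that sharpness for an explicit mechanistic picture (match numbers, sign forcing, the binomial trade-off) of \emph{how} the extrema arise. Both explanations of the apparent leveling-off and of the shrinking empirical variance are compatible; your caveat that no genuine limit exists matches the paper's own remark following the observation.
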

This second observation identifies a large disparity between the
worst-case and the typical behavior of the norm ratios, and as mentioned
in the introduction, this lies at the heart of spinodal decomposition.
We would like to point out, however, that one should not expect the norm
ratios shown in Figure~\ref{fig:meanBehav} to converge to a fixed
value. In fact, we do anticipate a small logarithmic growth of the norm
ratios as $\eps \to 0$.
\subsection{Bounds on the Moments} 
For the remainder of this section, we briefly review probabilistic
estimates for the maximum norms of random Fourier cosine sums.
These estimates are indirect in the sense that they do not provide
information on which specific instances of~$f$ realize the norm
bounds. 

Probabilistic methods for deriving uniform bounds for Gaussian
random functions are well-known. In the following, we present
an approach based on fractional Sobolev spaces, which originated
in~\cite{daprato:debussche:96a}; see also~\cite{daprato:zabczyk:14a}.
To adapt this approach to our setting, we first establish an
asymptotic moment bound. Rather than giving almost sure bounds
for the norm ratio of interest to us, namely
$\|f\|_{L^\infty(0,1)} / \|f\|_{L^2(0,1)}$, this first
result only provides bounds on the ratio $(\mathbb{E}
\|f\|^p_{L^\infty(0,1)})^{1/p} / (\E\|f\|_{L^2(0,1)}^2)^{1/2}$,
which involves the expected values of the norms of the random
Fourier cosine sum~$f$. 
\begin{theorem} \label{thm:brute}
Consider random Fourier cosine sums~$f$ as in Definition~\ref{def:deff}.
Then for arbitrary constants $p > 1$ and $0 < \eta < 2$ there exists a
constant $C > 0$ which is independent of~$\eps$ such that
\begin{displaymath}
   \mathbb{E}\|f\|^{p}_{L^\infty(0,1)} \leq
   C \left( \sum_{k\in\Lambda} k^\eta \right)^{p/2} +
     C \, |\Lambda|^{p/2} \; .
\end{displaymath}
\end{theorem}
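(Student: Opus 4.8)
\emph{Plan.} The plan is to control $\|f\|_{L^\infty(0,1)}$ by a fractional Sobolev norm $\|f\|_{W^{\alpha,q}(0,1)}$ with a \emph{large} integrability exponent $q$ and a correspondingly \emph{small} smoothness exponent $\alpha$, and then to estimate the $q$-th moment of that norm directly, exploiting that $f$ is a centered Gaussian process (this is the Da~Prato--Debussche type argument referred to above). Taking $q$ large is the key point: the Sobolev embedding $W^{\alpha,q}(0,1)\hookrightarrow C([0,1])$ requires only $\alpha q>1$, so $\alpha$ may be taken arbitrarily close to $0$; the naive choice $q=2$ would instead force $\alpha>\tfrac12$ and hence only reach $\eta>1$. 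Since the embedding is into $C([0,1])$ up to and including the endpoints, the Neumann boundary plays no special role in this particular estimate.

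\emph{Choice of parameters and the embedding.} Given $p>1$ and $0<\eta<2$, fix any $q>\max\{2,\,p,\,2/\eta\}$, put $\beta:=\eta/2$, and pick $\alpha$ with $1/q<\alpha<\beta$; this is possible precisely because $1/q<\eta/2$, and the hypothesis $\eta<2$ guarantees $\beta\in(0,1)$, which is needed below. The Sobolev embedding then yields an $\eps$-independent constant $C=C(\alpha,q)$ with $\|f\|_{L^\infty(0,1)}\le C\,\|f\|_{W^{\alpha,q}(0,1)}$, and we use the Gagliardo norm
\[
  \|f\|_{W^{\alpha,q}}^q=\|f\|_{L^q}^q+\int_0^1\!\!\int_0^1\frac{|f(x)-f(y)|^q}{|x-y|^{1+\alpha q}}\,\mathrm{d}x\,\mathrm{d}y.
\]
Taking expectations, applying Fubini, and using that $f(x)$ and $f(x)-f(y)$ are centered Gaussian, so that $\mathbb{E}|f(x)|^q=m_q\,\sigma(x)^q$ and $\mathbb{E}|f(x)-f(y)|^q=m_q\,\rho(x,y)^q$ with $m_q=\mathbb{E}|Z|^q$ for a standard normal $Z$, reduces the whole estimate to the two variance functions $\sigma(x)^2=\mathbb{E}f(x)^2=2\sum_{k\in\Lambda}\cos^2(k\pi x)$ and $\rho(x,y)^2=\mathbb{E}(f(x)-f(y))^2=2\sum_{k\in\Lambda}(\cos(k\pi x)-\cos(k\pi y))^2$.

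\emph{The two moment estimates.} For the $L^q$-part, the pointwise bound $\sigma(x)^2\le 2|\Lambda|$ together with $\int_0^1\sigma(x)^2\,\mathrm{d}x=|\Lambda|$ gives $\int_0^1\sigma(x)^q\,\mathrm{d}x\le C\,|\Lambda|^{q/2}$, hence $\mathbb{E}\|f\|_{L^q}^q\le C\,|\Lambda|^{q/2}$. For the Gagliardo part, the elementary interpolation inequality $|\sin t|\le|t|^{\beta}$ (valid for $0<\beta\le1$ and all $t$) yields $(\cos(k\pi x)-\cos(k\pi y))^2\le C\,k^{2\beta}|x-y|^{2\beta}$, so that $\rho(x,y)^q\le C\bigl(\sum_{k\in\Lambda}k^{2\beta}\bigr)^{q/2}|x-y|^{\beta q}$. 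Since $(\beta-\alpha)q>0$, the integral $\int_0^1\!\int_0^1|x-y|^{(\beta-\alpha)q-1}\,\mathrm{d}x\,\mathrm{d}y$ is a finite constant, and because $2\beta=\eta$ this gives $\mathbb{E}\int_0^1\!\int_0^1\frac{|f(x)-f(y)|^q}{|x-y|^{1+\alpha q}}\,\mathrm{d}x\,\mathrm{d}y\le C\bigl(\sum_{k\in\Lambda}k^{\eta}\bigr)^{q/2}$. Adding the two contributions, $\mathbb{E}\|f\|_{L^\infty}^q\le C\bigl(\sum_{k\in\Lambda}k^{\eta}\bigr)^{q/2}+C\,|\Lambda|^{q/2}$.

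\emph{Descending to exponent $p$ and the main obstacle.} Since $q\ge p$, Jensen's inequality gives $\mathbb{E}\|f\|_{L^\infty}^p\le(\mathbb{E}\|f\|_{L^\infty}^q)^{p/q}$, and subadditivity of $t\mapsto t^{p/q}$ (as $p/q\le1$) turns the previous display into the claimed bound, with a new constant $C$ depending only on $p$ and $\eta$ but not on $\eps$. I expect the main obstacle to be the very first step: one has to recognize that the $L^2$-based estimate $\|f\|_{L^\infty}\lesssim\|f\|_{H^{1/2+\delta}}$ is too weak to reach small $\eta$, and therefore that one must work at a high moment exponent $q$ (so that $\alpha$ can be chosen small) and only descend to $p$ at the end, the Gaussian computation then resting on the pseudometric bound $\rho(x,y)\lesssim\bigl(\sum_{k\in\Lambda}k^{2\beta}\bigr)^{1/2}|x-y|^{\beta}$ coming from $|\sin t|\le|t|^{\beta}$. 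Everything else is bookkeeping of exponents together with standard Gaussian moment identities.
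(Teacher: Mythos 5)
Your proposal is correct and follows essentially the same route as the paper: a Gagliardo-norm fractional Sobolev embedding $W^{\alpha,q}\hookrightarrow C([0,1])$ with $\alpha q>1$ and $\alpha$ small, Gaussian moment identities reducing everything to the variance functions, the H\"older-type bound $|\cos(k\pi x)-\cos(k\pi y)|^2\le C_\eta k^\eta|x-y|^\eta$, and a final descent from the large auxiliary exponent to $p$ (you via Jensen and subadditivity, the paper via H\"older — the same device). The only cosmetic differences are your explicit parameter bookkeeping with $q$ and $\beta=\eta/2$ and your use of $|\sin t|\le|t|^\beta$ in place of the paper's two-step interpolation of the cosine difference.
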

\db{Note that the constant in the previous theorem will depend very badly on $p$.
It grows faster than exponential, and we will need large $p$ in the sequel.} 

Before proving the theorem, we demonstrate how it can be used to
bound the above-mentioned moment ratio
$(\mathbb{E} \|f\|^p_{L^\infty(0,1)})^{1/p} / (\E\|f\|_{L^2(0,1)}^2)^{1/2}$.
Recall that we are interested in the asymptotic behavior as $\eps \to 0$.
In this case, it has already been shown that $|\Lambda| \sim \eps^{-1}$, as well
as $k \sim \eps^{-1}$ for all $k \in \Lambda$. This leads to the following
bound for the maximum norm moment in terms of~$\eps$.
\begin{corollary} \label{cor:brute}
Consider random Fourier cosine sums~$f$ as in Definition~\ref{def:deff}.
Then for any choice of $p > 1$ and $\delta > 0$ there exists a constant
$C > 0$ such that for $0 < \eps \le 1$ we have
\begin{displaymath}
  \left( \mathbb{E} \|f\|^p_{L^\infty(0,1)} \right)^{1/p} \le
  C \eps^{-\delta/2} \left( \mathbb{E} \|f\|_{L^2(0,1)}^{2}
    \right)^{1/2} =
  C \eps^{-(1+\delta)/2} \; .
\end{displaymath}
\end{corollary}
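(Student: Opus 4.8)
The plan is to derive Corollary~\ref{cor:brute} directly from Theorem~\ref{thm:brute} by substituting the known asymptotic sizes of $|\Lambda|$ and of the wave numbers $k \in \Lambda$, and then estimating the $L^2$-norm exactly. First I would recall that, by~(\ref{e:defalphapm}), every $k \in \Lambda$ satisfies $\km \le k \le \kp$ with $\km \sim \alpham/\eps$ and $\kp \sim \alphap/\eps$, so there is a constant $C_1 > 0$ (depending only on $\gamma$) with $k \le C_1 \eps^{-1}$ for all $k \in \Lambda$ and all $0 < \eps \le 1$; similarly $|\Lambda| = \kp - \km + 1 \le C_2 \eps^{-1}$. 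Given $\delta > 0$, I would choose $\eta := \delta \in (0,2)$ — shrinking $\delta$ if necessary so that $\delta < 2$, which is harmless — and apply Theorem~\ref{thm:brute} with this $\eta$ and the given $p$. This yields a constant $C$ with
\begin{displaymath}
  \mathbb{E}\|f\|^p_{L^\infty(0,1)} \le
  C \left( \sum_{k \in \Lambda} k^\delta \right)^{p/2} + C \, |\Lambda|^{p/2}
  \le C \left( |\Lambda| \cdot (C_1 \eps^{-1})^\delta \right)^{p/2} + C \, (C_2 \eps^{-1})^{p/2} .
\end{displaymath}

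Next I would bound both terms on the right by a single power of $\eps$. Using $|\Lambda| \le C_2 \eps^{-1}$ in the first term gives $\left(|\Lambda| (C_1 \eps^{-1})^\delta\right)^{p/2} \le C' \eps^{-(1+\delta)p/2}$, and the second term is $(C_2\eps^{-1})^{p/2} = C'' \eps^{-p/2} \le C'' \eps^{-(1+\delta)p/2}$ since $\eps \le 1$ and $\delta > 0$. Absorbing constants, we obtain $\mathbb{E}\|f\|^p_{L^\infty(0,1)} \le \widetilde C \, \eps^{-(1+\delta)p/2}$, hence $\left(\mathbb{E}\|f\|^p_{L^\infty(0,1)}\right)^{1/p} \le \widetilde C^{1/p} \, \eps^{-(1+\delta)/2}$.

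For the denominator, I would compute the second moment of the $L^2$-norm exactly: by orthonormality of the $e_k$ in $L^2(0,1)$ and independence of the standard Gaussians $c_k$,
\begin{displaymath}
  \mathbb{E}\|f\|^2_{L^2(0,1)} = \mathbb{E} \sum_{k \in \Lambda} c_k^2 = |\Lambda| ,
\end{displaymath}
so $\left(\mathbb{E}\|f\|^2_{L^2(0,1)}\right)^{1/2} = |\Lambda|^{1/2} \sim \eps^{-1/2}$, and in fact $|\Lambda|^{1/2} \ge c \eps^{-1/2}$ for some $c > 0$. Combining, $\left(\mathbb{E}\|f\|^p_{L^\infty(0,1)}\right)^{1/p} \le \widetilde C^{1/p} \eps^{-(1+\delta)/2} = \widetilde C^{1/p} \eps^{-\delta/2} \cdot \eps^{-1/2} \le (\widetilde C^{1/p}/c) \, \eps^{-\delta/2} \left(\mathbb{E}\|f\|^2_{L^2(0,1)}\right)^{1/2}$, which is the claimed inequality with $C := \widetilde C^{1/p}/c$; the final equality in the corollary statement is then just the evaluation $\left(\mathbb{E}\|f\|^2_{L^2(0,1)}\right)^{1/2} = |\Lambda|^{1/2} \sim \eps^{-1/2}$.

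This argument is essentially bookkeeping, so there is no serious obstacle; the only point requiring a little care is the interplay of the exponents. One must check that choosing $\eta = \delta$ (rather than something smaller) still keeps $\eta < 2$, and that the $|\Lambda|$-term $k^\eta$-sum really contributes the dominant power $\eps^{-(1+\delta)p/2}$ rather than something worse — this is fine precisely because $k \sim \eps^{-1}$ and $|\Lambda| \sim \eps^{-1}$ scale the same way, so $\sum_{k \in \Lambda} k^\eta \sim \eps^{-1-\eta}$. The mild subtlety that the corollary is stated for all $\delta > 0$ whereas Theorem~\ref{thm:brute} needs $\eta < 2$ is dispatched by noting that proving the bound for small $\delta$ immediately gives it for large $\delta$ as well, since $\eps^{-\delta/2}$ is increasing in $\delta$ for $\eps \le 1$.
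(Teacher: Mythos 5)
Your proposal is correct and follows essentially the same route as the paper: apply Theorem~\ref{thm:brute} with a suitable $\eta$, use $k \sim \eps^{-1}$ and $|\Lambda| \sim \eps^{-1}$ to reduce both terms to $\eps^{-(1+\delta)p/2}$, and divide by $(\mathbb{E}\|f\|_{L^2}^2)^{1/2} = |\Lambda|^{1/2} \sim \eps^{-1/2}$. The only (cosmetic) difference is the choice of $\eta$: the paper takes $\eta = \min\{\delta,1\}$ so that $\eta < 2$ holds automatically, while you take $\eta = \delta$ and handle $\delta \ge 2$ by the monotonicity of $\eps^{-\delta/2}$ — both are fine.
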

\begin{proof}
Let $\eta = \min\{ \delta, 1 \}$. Then there is a constant $C>0$
such that for $0 < \eps \le 1$ we have
\begin{displaymath}
  \mathbb{E}\|f\|^p_{L^\infty(0,1)} \le
  C \left( \sum_{k\in\Lambda} k^\eta \right)^{p/2}  + C |\Lambda|^{p/2} \le
  C \left( \eps^{-p(\eta+1)/2} + \eps^{-p/2} \right) \le
  C \eps^{-p(\delta+1)/2} \; .
\end{displaymath}
Moreover, we have already seen that our assumptions on the random
coefficients~$c_k$ imply the identity $\mathbb{E}\|f\|^2_{L^2(0,1)} =
|\Lambda| \sim \eps^{-1}$, i.e., one has
$(\mathbb{E}\|f\|^2_{L^2(0,1)})^{1/2} \sim \eps^{-1/2}$. Taking the
$p$-th root finally yields the result.
\end{proof}
We would like to point out that the constant~$C$ in both of the previous
results is usually not small. In fact, it usually grows exponentially
in the moment exponent~$p$. We now turn our attention to the proof
of Theorem~\ref{thm:brute}.
\begin{proof}[Proof of Theorem~\ref{thm:brute}]
Our proof is based on the fractional Sobolev spaces~$W^{\alpha, p}(G)$,
and we refer the reader to~\cite{hitchhiker, runst:sickel:96a} for more
details. For the moment, recall that for $\alpha\in(0,1)$ and~$G = [0,1]$
the definition of the $W^{\alpha, p}(G)$-norm is given by
\begin{displaymath}
  \|u\|_{W^{\alpha, p}}^p =
  \int_G \int_G \frac{|u(x)-u(y)|^p}{|x-y|^{1+\alpha p}}\, dx \, dy
    + \|u\|_{L^p}^p \; .
\end{displaymath}
Note further that as long as the inequality $\alpha p > 1$ holds,
one has the continuous Sobolev embedding $W^{\alpha, p}(G)\subset
C^0(\overline{G})$. For sufficiently large $p > 1/\alpha$ we then
obtain
\begin{displaymath}
  \mathbb{E}\|f\|^p_{L^\infty(0,1)} \leq
  C \, \mathbb{E}\|f\|^p_{W^{\alpha,p}} =
  C \, \mathbb{E} \int_0^1 \int_0^1
    \frac{|f(x)-f(y)|^p}{|x-y|^{1 + \alpha p}} \, dx \, dy +
    C \, \mathbb{E}\|f\|^p_{L^p}\;.
\end{displaymath}
Note that according to Definition~\ref{def:deff}, the random
variables~$f(x)$ and~$f(x) - f(y)$ are real-valued Gaussian
random variables, as they are the sum of independent Gaussians.
Thus we can bound higher moments by the second to obtain the
estimate
\begin{displaymath}
  \mathbb{E}\|f\|^p_{L^\infty(0,1)} \leq
  C \int_0^1 \int_0^1 \frac{\left(\mathbb{E}|f(x)-f(y)|^2\right)^{p/2}}
    {|x-y|^{1 + \alpha p}} \, dx \, dy +
    C \int_0^1 \left( \mathbb{E}|f(x)|^2 \right)^{p/2} \, dx
    \;.
\end{displaymath}
One can easily see that
\begin{displaymath}
  \mathbb{E}|f(x)|^2 =
  2 \sum_{k\in\Lambda} \cos^2(k\pi x) \leq  2 |\Lambda| \; ,
\end{displaymath}
and for any $\eta \in (0,2)$ there exists a constant $C_\eta$
such that
\begin{eqnarray*}
  \mathbb{E}|f(x)-f(y)|^2 & = &
    2 \sum_{k\in\Lambda} \left| \cos(k\pi x) - \cos(k\pi y)
    \right|^2 \\
  & \leq & C_\eta \sum_{k\in\Lambda} \left| \cos(k\pi x) -
    \cos(k\pi y) \right|^\eta
  \; \leq \;
    C_\eta \sum_{k\in\Lambda} k^\eta \pi^\eta
    |x-y|^\eta \;.
\end{eqnarray*}
Combined, these estimates imply
\begin{eqnarray*}
  \mathbb{E}\|f\|^p_{L^\infty(0,1)} & \leq &
    C \int_0^1 \int_0^1 \frac{\Big( C_\eta \sum_{k\in\Lambda}
    k^\eta \pi^\eta |x-y|^\eta \Big)^{p/2}}{|x-y|^{1 + \alpha p}}
    \, dx \, dy + C
    \int_0^1 \left( |\Lambda| \right)^{p/2} \, dx \\[2ex]
  & \leq & C \Big( \sum_{k\in\Lambda} k^\eta \Big)^{p/2}
    \int_0^1 \int_0^1 |x-y|^{\eta p/2 - 1 - \alpha p}
    \, dx \, dy + C |\Lambda|^{p/2} \; ,
\end{eqnarray*}
where the constants~$C$ depend on~$p$ and~$\eta$. Finally,
choose $\eta \in (2\alpha, 2)$ and $p > 1/\alpha$. Then
we have $\eta p/2 - 1 - \alpha p > -1$, and the above
estimate yields
\begin{displaymath}
  \mathbb{E}\|f\|^p_{L^\infty(0,1)} \leq
  C \Big( \sum_{k\in\Lambda} k^\eta \Big)^{p/2} +
    C |\Lambda|^{p/2}\;.
\end{displaymath}
for some constant~$C > 0$. Thus, we have proved the result for
large~$p$, which in combination with H\"older's inequality also
establishes its validity for smaller values of~$p$.
\end{proof}
%
%
%
\subsection{Probabilistic Maximum Norm Bounds} 
%
%
%
After the preparations of the last section, we can now finally derive
probabilistic bounds on the norm ratio $\|f\|_{L^\infty(0,1)}/\|f\|_{L^2(0,1)}$.
These bounds will be true \db{only asymptotically  and} with high probability. Unfortunately,
however, they will not shed any light on which instances of the random
Fourier cosine sum~$f$ realize the small norm ratios. Moreover, they do
not cover the range of moderately small values of~$\eps$. This is due
to the fact that the involved constants depend in a highly nonlinear
way on the parameters~$q$ and~$\delta$ below.
\begin{theorem} \label{thm:main}
Consider random Fourier cosine sums~$f$ as in Definition~\ref{def:deff}.
Then for any choice of $\delta>0$ and any $q>1$ there exists a constant
$C>0$ such that
\begin{displaymath}
  \mathbb{P}\left( \|f\|_{L^\infty(0,1)} \leq
    \eps^{-\delta} \|f\|_{L^2(0,1)} \right) \geq 1-C\eps^q
  \; .
\end{displaymath}
As before, the constant~$C$ is independent of~$\eps$. In other words,
with probability close to one the norm ratio $\|f\|_{L^\infty(0,1)} /
\|f\|_{L^2(0,1)}$ grows at most like~$\eps^{-\delta}$ as $\eps \to 0$.
\end{theorem}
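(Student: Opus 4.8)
The plan is to bootstrap the high-probability bound of the theorem from the moment estimate already established in Corollary~\ref{cor:brute}, combined with a concentration estimate for the random denominator $\|f\|_{L^2(0,1)}^2 = \sum_{k\in\Lambda} c_k^2$. The key observation is that this denominator is a sum of $|\Lambda|\sim\eps^{-1}$ independent squared standard normals, so it concentrates sharply around its mean $|\Lambda|$; on an event of overwhelming probability we may therefore replace $\|f\|_{L^2(0,1)}$ by a deterministic lower bound of order $\eps^{-1/2}$, and it then remains only to bound the probability that $\|f\|_{L^\infty(0,1)}$ exceeds an appropriate multiple of $\eps^{-\delta-1/2}$. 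That last bound comes straight out of Markov's inequality applied to the $p$-th moment estimate, once $p$ is taken large enough in terms of $q$ and $\delta$. For $\eps$ bounded away from zero the assertion is trivial (the right-hand side $1-C\eps^q$ can be made nonpositive by enlarging $C$), so throughout one works with $\eps$ small.

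In detail, I would first fix $q>1$ and $\delta>0$ and introduce the good event $A := \{\,\|f\|_{L^2(0,1)}^2 \ge \tfrac12 |\Lambda|\,\}$. A standard lower-tail large deviation estimate for the chi-squared sum $\sum_{k\in\Lambda}(c_k^2-1)$ (for instance a Bernstein bound) yields $\mathbb{P}(A^c) \le e^{-c|\Lambda|}$ for an absolute constant $c>0$; since $|\Lambda| \sim \eps^{-1}$, this is $o(\eps^q)$ as $\eps\to 0$, hence $\le C\eps^q$ on $(0,1]$ after enlarging $C$. Next, I would apply Corollary~\ref{cor:brute} with an exponent $p$ still to be chosen and with the corollary's tolerance parameter equal to $\delta$, giving $\mathbb{E}\|f\|_{L^\infty(0,1)}^p \le C^p \eps^{-p(1+\delta)/2}$ for $0<\eps\le 1$. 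Using $|\Lambda| \sim \eps^{-1}$ together with Markov's inequality,
\[
  \mathbb{P}\!\left( \|f\|_{L^\infty(0,1)} > \eps^{-\delta}\sqrt{\tfrac12|\Lambda|} \right)
  \;\le\; \frac{\mathbb{E}\|f\|_{L^\infty(0,1)}^p}{\eps^{-p\delta}\,(\tfrac12|\Lambda|)^{p/2}}
  \;\le\; C' \,\eps^{-p(1+\delta)/2}\cdot \eps^{p\delta}\cdot \eps^{p/2}
  \;=\; C'\,\eps^{p\delta/2},
\]
so choosing $p \ge 2q/\delta$ makes this probability at most $C'\eps^q$.

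Finally, on $A$ one has $\|f\|_{L^2(0,1)} \ge \sqrt{|\Lambda|/2}$, so the event $\{\|f\|_{L^\infty(0,1)} > \eps^{-\delta}\|f\|_{L^2(0,1)}\} \cap A$ is contained in the event just estimated; combining the two tail bounds gives
\[
  \mathbb{P}\!\left(\|f\|_{L^\infty(0,1)} > \eps^{-\delta}\|f\|_{L^2(0,1)}\right)
  \;\le\; \mathbb{P}(A^c) + C'\eps^{p\delta/2}
  \;\le\; C\eps^q
\]
for a suitable $\eps$-independent constant $C>0$, which is the claim. I do not anticipate a genuine obstacle in this argument; the only delicate points are bookkeeping — tracking how the constant from Corollary~\ref{cor:brute} depends on $p$ (it grows faster than exponentially in $p$, but $p$ is fixed once $q$ and $\delta$ are chosen), quoting the chi-squared concentration bound with explicit subexponential decay so that it is genuinely negligible against $\eps^q$, and disposing of the uninteresting regime of $\eps$ close to one.
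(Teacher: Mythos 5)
Your proposal is correct and follows essentially the same route as the paper: split off a high-probability event on which $\|f\|_{L^2(0,1)}$ is bounded below by a multiple of $\sqrt{|\Lambda|}\sim\eps^{-1/2}$, control its complement via the lower tail of the $\chi^2_{|\Lambda|}$ distribution (the paper integrates the density explicitly and invokes Stirling where you quote a Bernstein/Laurent--Massart bound, but both give decay exponential in $|\Lambda|\sim\eps^{-1}$), and then apply Markov's inequality to the $p$-th moment from Corollary~\ref{cor:brute} with $p>2q/\delta$. The bookkeeping in your display is correct, so no gap remains.
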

\begin{proof}
Let $y > 0$ be an arbitrary constant, whose precise value will be fixed
later in the proof. Then we obtain
\begin{eqnarray*}
  \mathbb{P}\left( \|f\|_{L^\infty(0,1)} \leq \eps^{-\delta}
    \|f\|_{L^2(0,1)} \right) & \geq &
    \mathbb{P}\left( \|f\|_{L^\infty(0,1)} \leq \eps^{-\delta} y
    \;\mbox{ and }\; y \leq \|f\|_{L^2(0,1)} \right) \\[1ex]
  & \geq & 1 - \mathbb{P}\left( \|f\|_{L^\infty(0,1)} >
    \eps^{-\delta} y \right) -
    \mathbb{P}\left( y > \|f\|_{L^2(0,1)} \right) \\[1ex]
  & \geq & 1 - \mathbb{E} \|f\|_{L^\infty(0,1)}^p \cdot
    \left( \frac{\eps^{\delta}}{y} \right)^p -
    \mathbb{P}\left( y^2 > \|f\|_{L^2(0,1)}^2 \right) \; .
\end{eqnarray*}
The remaining two terms will be bounded separately. For the
second one, we can use the fact that the random variable
$\|f\|_{L^2(0,1)}^2$ is chi-squared distributed with
$N = |\Lambda| \sim \eps^{-1}$ degrees of freedom. Using
the explicit representation of its density, one then obtains
\begin{eqnarray*}
  \mathbb{P}\left( y^2 > \|f\|_{L^2(0,1)}^2 \right) & = &
    \int_0^{y^2}\frac{x^{N/2-1} e^{-x/2}}{2^{N/2} \, \Gamma(N/2)}
    \, dx \; \leq \;
    \int_0^{y^2}\frac{x^{N/2-1}}{2^{N/2} \, \Gamma(N/2)} \, dx \\[1ex]
  & = & \frac{(y^2)^{N/2}}{2^{N/2} \,
    \Gamma\left(\frac{N}{2}+1\right)} \; \leq \;
    C {N}^{-1/2}~{\left( \frac{e y^2}{N} \right)}^{N/2}\;,
\end{eqnarray*}
where we used Stirling's formula for the last inequality.
Thus, this probability is exponentially small in~$\eps$ as long
as~$y$ is not too large. More precisely, if we now define
\begin{displaymath}
  y = \sqrt{\frac{|\Lambda|}{2e}} \sim \eps^{-1/2}
  \; ,
\end{displaymath}
then $e y^2/N = 1/2$, and the probability~$\mathbb{P}( y^2 >
\|f\|_{L^2(0,1)}^2)$ decays exponentially fast in~$\eps$.
We now turn our attention to the first term in the first 
estimate. Using Corollary~\ref{cor:brute} one obtains
\begin{displaymath}
  \mathbb{E} \|f\|_{L^\infty(0,1)}^p \cdot
    \left( \frac{\eps^{\delta}}{y} \right)^p \; \leq \;
   C \eps^{-p(1+\delta)/2} \cdot \eps^{p(\delta+1/2)} \; \leq \;
   C \eps^{\delta p/2} \; .
\end{displaymath}
Choosing $p > 2q/\delta$ completes the proof of the theorem.  
\end{proof}
%
%
%
\section{Boundary Behavior and Sign Forcing}
\label{sec:forcing}
%
%
%
\subsection{Random Fourier Cosine Sums with Forced Signs}
While 
\db{for the asymptotic regime $\eps\to0$}
the results of the last section provide some initial
insight into the typical behavior of norm ratios for random
Fourier cosine sums~(\ref{eq:fnc}), they do not indicate 
for what instances of~$f$ the smaller norm ratios are realized.
Therefore, our next step is an attempt to understand the behavior 
of functions of the above type by forcing the signs of the
normal coefficients. This approach is motivated by the fact that
the worst-case behavior is observed if all signs of the
coefficients~$c_k$ are equal. Throughout this section, we
consider random functions of the form
\begin{equation} \label{eq:fnc:gm}
  g^{(m)}(x) = \sum_{k\in\Lambda} s_k \cdot |c_k| \cdot
  \sqrt{2} \, \cos(k \pi x) \; ,
\end{equation}
where~$\Lambda$ and the coefficients~$c_k$ are defined as in
Definition~\ref{def:deff}. Newly introduced are the sign
coefficient factors $s_k \in \{-1, 1\}$ for $k \in \Lambda$,
where we assume that $|\{k \in \Lambda : s_k = 1 \}| = m$, for
some integer $m \in \{ 0, \ldots, |\Lambda| \}$. In other words,
the superscript~$m$ in~(\ref{eq:fnc:gm}) determines the number
of positive coefficients in the random Fourier cosine sum. Note,
however, that the locations of the positive signs are randomly
distributed among all of the involved mode functions.
\begin{figure}

\scalebox{0.7}{
\setlength{\unitlength}{1pt}
\begin{picture}(0,0)
\includegraphics{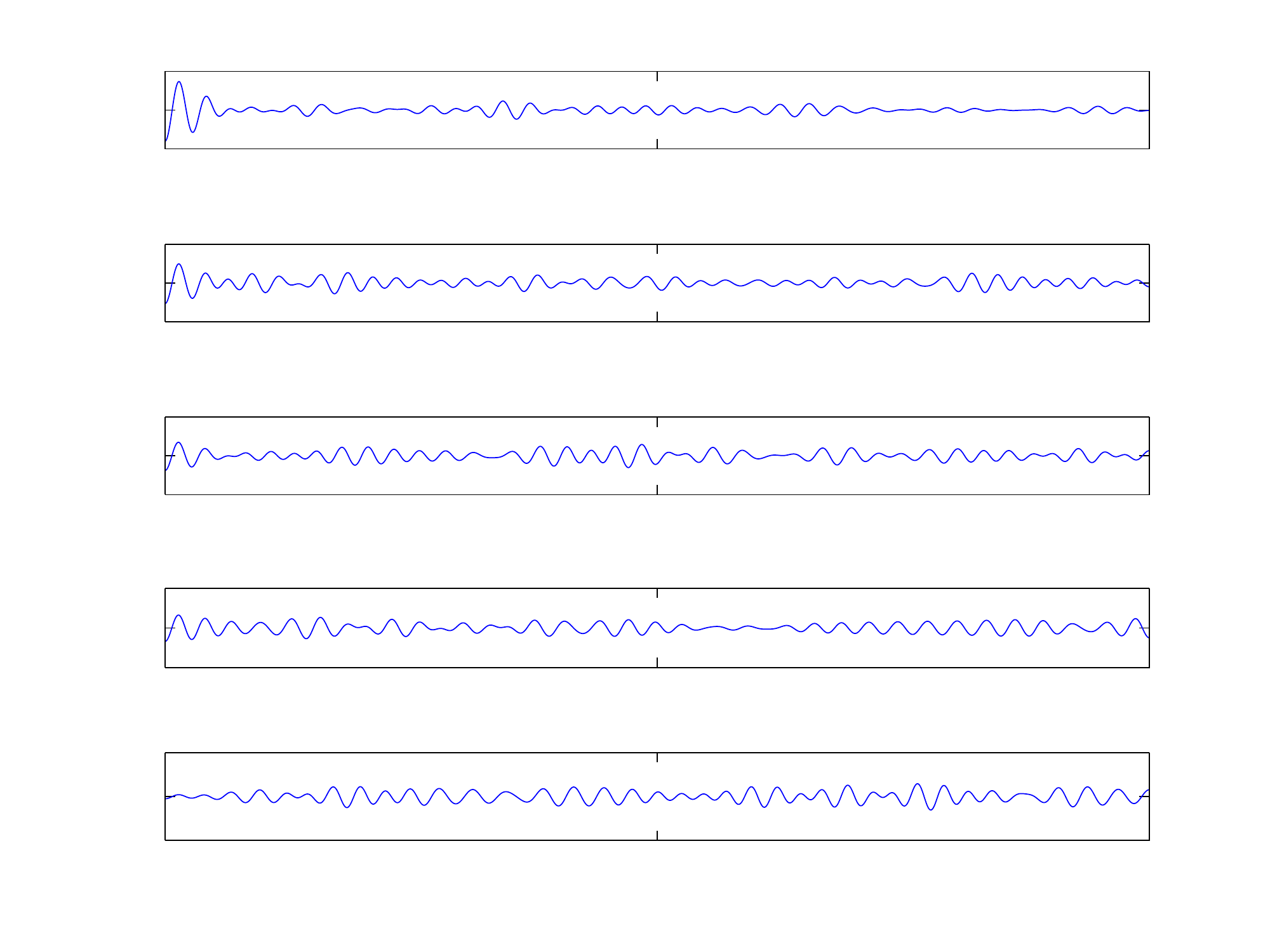}
\end{picture}%
\begin{picture}(576,432)(0,0)
\fontsize{20}{0}
\selectfont\put(74.88,359.418){\makebox(0,0)[t]{\textcolor[rgb]{0,0,0}{{0}}}}
\fontsize{20}{0}
\selectfont\put(298.08,359.418){\makebox(0,0)[t]{\textcolor[rgb]{0,0,0}{{0.5}}}}
\fontsize{20}{0}
\selectfont\put(521.28,359.418){\makebox(0,0)[t]{\textcolor[rgb]{0,0,0}{{1}}}}
\fontsize{20}{0}
\selectfont\put(69.8755,364.441){\makebox(0,0)[r]{\textcolor[rgb]{0,0,0}{{-6}}}}
\fontsize{20}{0}
\selectfont\put(69.8755,382.021){\makebox(0,0)[r]{\textcolor[rgb]{0,0,0}{{0}}}}
\fontsize{20}{0}
\selectfont\put(69.8755,399.6){\makebox(0,0)[r]{\textcolor[rgb]{0,0,0}{{6}}}}
\fontsize{20}{0}
\selectfont\put(74.88,280.967){\makebox(0,0)[t]{\textcolor[rgb]{0,0,0}{{0}}}}
\fontsize{20}{0}
\selectfont\put(298.08,280.967){\makebox(0,0)[t]{\textcolor[rgb]{0,0,0}{{0.5}}}}
\fontsize{20}{0}
\selectfont\put(521.28,280.967){\makebox(0,0)[t]{\textcolor[rgb]{0,0,0}{{1}}}}
\fontsize{20}{0}
\selectfont\put(69.8755,285.99){\makebox(0,0)[r]{\textcolor[rgb]{0,0,0}{{-6}}}}
\fontsize{20}{0}
\selectfont\put(69.8755,303.569){\makebox(0,0)[r]{\textcolor[rgb]{0,0,0}{{0}}}}
\fontsize{20}{0}
\selectfont\put(69.8755,321.149){\makebox(0,0)[r]{\textcolor[rgb]{0,0,0}{{6}}}}
\fontsize{20}{0}
\selectfont\put(74.88,202.516){\makebox(0,0)[t]{\textcolor[rgb]{0,0,0}{{0}}}}
\fontsize{20}{0}
\selectfont\put(298.08,202.516){\makebox(0,0)[t]{\textcolor[rgb]{0,0,0}{{0.5}}}}
\fontsize{20}{0}
\selectfont\put(521.28,202.516){\makebox(0,0)[t]{\textcolor[rgb]{0,0,0}{{1}}}}
\fontsize{20}{0}
\selectfont\put(69.8755,207.539){\makebox(0,0)[r]{\textcolor[rgb]{0,0,0}{{-6}}}}
\fontsize{20}{0}
\selectfont\put(69.8755,225.118){\makebox(0,0)[r]{\textcolor[rgb]{0,0,0}{{0}}}}
\fontsize{20}{0}
\selectfont\put(69.8755,242.698){\makebox(0,0)[r]{\textcolor[rgb]{0,0,0}{{6}}}}
\fontsize{20}{0}
\selectfont\put(74.88,45.6094){\makebox(0,0)[t]{\textcolor[rgb]{0,0,0}{{0}}}}
\fontsize{20}{0}
\selectfont\put(298.08,45.6094){\makebox(0,0)[t]{\textcolor[rgb]{0,0,0}{{0.5}}}}
\fontsize{20}{0}
\selectfont\put(521.28,45.6094){\makebox(0,0)[t]{\textcolor[rgb]{0,0,0}{{1}}}}
\fontsize{20}{0}
\selectfont\put(69.8755,50.5892){\makebox(0,0)[r]{\textcolor[rgb]{0,0,0}{{-6}}}}
\fontsize{20}{0}
\selectfont\put(69.8755,70.5081){\makebox(0,0)[r]{\textcolor[rgb]{0,0,0}{{0}}}}
\fontsize{20}{0}
\selectfont\put(69.8755,90.4271){\makebox(0,0)[r]{\textcolor[rgb]{0,0,0}{{6}}}}
\fontsize{20}{0}
\selectfont\put(74.88,124.063){\makebox(0,0)[t]{\textcolor[rgb]{0,0,0}{{0}}}}
\fontsize{20}{0}
\selectfont\put(298.08,124.063){\makebox(0,0)[t]{\textcolor[rgb]{0,0,0}{{0.5}}}}
\fontsize{20}{0}
\selectfont\put(521.28,124.063){\makebox(0,0)[t]{\textcolor[rgb]{0,0,0}{{1}}}}
\fontsize{20}{0}
\selectfont\put(69.8755,129.057){\makebox(0,0)[r]{\textcolor[rgb]{0,0,0}{{-6}}}}
\fontsize{20}{0}
\selectfont\put(69.8755,147.039){\makebox(0,0)[r]{\textcolor[rgb]{0,0,0}{{0}}}}
\fontsize{20}{0}
\selectfont\put(69.8755,165.02){\makebox(0,0)[r]{\textcolor[rgb]{0,0,0}{{6}}}}
\end{picture}

}
\caption{
Sample random instances of the normalized functions
           $g^{(m)} / \|g^{(m)}\|_{L^2(0,1)}$ defined in~(\ref{eq:fnc:gm})
           for the parameter value $\eps=10^{-2.5}$. In this case,
           the random sums involve $|\Lambda| = 33$ modes. From top to
           bottom the images correspond to the sign parameters
           $m=0, 4, 8, 12, 16$.}\label{fig:typg}
\end{figure}

%
%
\begin{table}
  \centering
 
  \begin{tabular}{c|cccc}
    $m$ & $\frac{|g^{(m)}(0)|}{\|g^{(m)}\|_{L^2(0,1)}}$ &
      $\frac{\|g^{(m)}|_{[0.2,1]}\|_{L^\infty(0.2,1)}}{\|g^{(m)}\|_{L^2(0,1)}}$ &
      $\frac{\|g^{(m)}\|_{L^\infty(0,1)}}{\|g^{(m)}\|_{L^2(0,1)}}$ \\ \hline
    0 & \fbox{4.8}  & 1.4 & \fbox{4.8} \\
    4 & \fbox{3.2}  & 1.5 & \fbox{3.2} \\
    8 & 2.3 & 1.8 & 2.2 \\
    12 & \fbox{2.0}  & 1.5 & \fbox{2.0} \\
    16 & 0.3  & \fbox{1.8} & \fbox{1.8}
  \end{tabular}
  \caption{Specific norm ratios for the instances of~$g^{(m)}$ shown
           in Figure~\ref{fig:typg}. The table contains the size of the
           function value at zero, the maximum norm over the interval~$[0.2,1]$,
           as well as the maximum norm over the whole interval~$[0,1]$, in
           each case normalized by the $L^2(0,1)$-norm of the function.}
  \label{tab:prop}
\end{table}
\begin{figure}
  \centering
  \scalebox{0.35}{
  \setlength{\unitlength}{1pt}
  \begin{picture}(0,0)
  \includegraphics{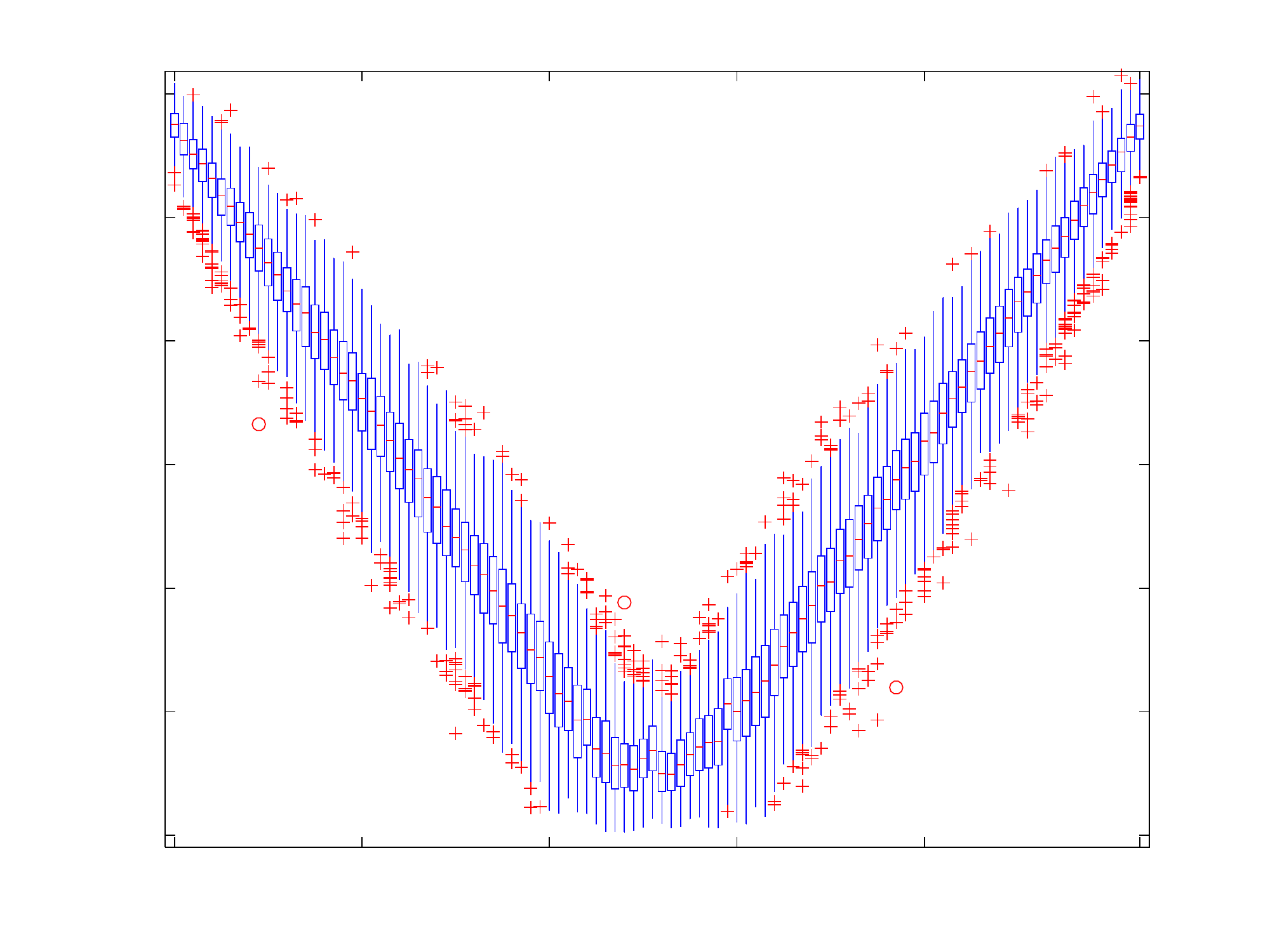}
  \end{picture}%
  \begin{picture}(576,432)(0,0)
  \fontsize{30}{0}
  \selectfont\put(79.1314,42.5189){\makebox(0,0)[t]{\textcolor[rgb]{0,0,0}{{0}}}}
  \fontsize{30}{0}
  \selectfont\put(164.16,42.5189){\makebox(0,0)[t]{\textcolor[rgb]{0,0,0}{{20}}}}
  \fontsize{30}{0}
  \selectfont\put(249.189,42.5189){\makebox(0,0)[t]{\textcolor[rgb]{0,0,0}{{40}}}}
  \fontsize{30}{0}
  \selectfont\put(334.217,42.5189){\makebox(0,0)[t]{\textcolor[rgb]{0,0,0}{{60}}}}
  \fontsize{30}{0}
  \selectfont\put(419.246,42.5189){\makebox(0,0)[t]{\textcolor[rgb]{0,0,0}{{80}}}}
  \fontsize{30}{0}
  \selectfont\put(517.029,42.5189){\makebox(0,0)[t]{\textcolor[rgb]{0,0,0}{{103}}}}
  \fontsize{30}{0}
  \selectfont\put(69.8755,52.9272){\makebox(0,0)[r]{\textcolor[rgb]{0,0,0}{{0}}}}
  \fontsize{30}{0}
  \selectfont\put(69.8755,109.008){\makebox(0,0)[r]{\textcolor[rgb]{0,0,0}{{2}}}}
  \fontsize{30}{0}
  \selectfont\put(69.8755,165.089){\makebox(0,0)[r]{\textcolor[rgb]{0,0,0}{{4}}}}
  \fontsize{30}{0}
  \selectfont\put(69.8755,221.17){\makebox(0,0)[r]{\textcolor[rgb]{0,0,0}{{6}}}}
  \fontsize{30}{0}
  \selectfont\put(69.8755,277.251){\makebox(0,0)[r]{\textcolor[rgb]{0,0,0}{{8}}}}
  \fontsize{30}{0}
  \selectfont\put(69.8755,333.332){\makebox(0,0)[r]{\textcolor[rgb]{0,0,0}{{10}}}}
  \fontsize{30}{0}
  \selectfont\put(69.8755,389.413){\makebox(0,0)[r]{\textcolor[rgb]{0,0,0}{{12}}}}
  \end{picture}
  
  }
    \scalebox{0.35}{
    \setlength{\unitlength}{1pt}
    \begin{picture}(0,0)
    \includegraphics{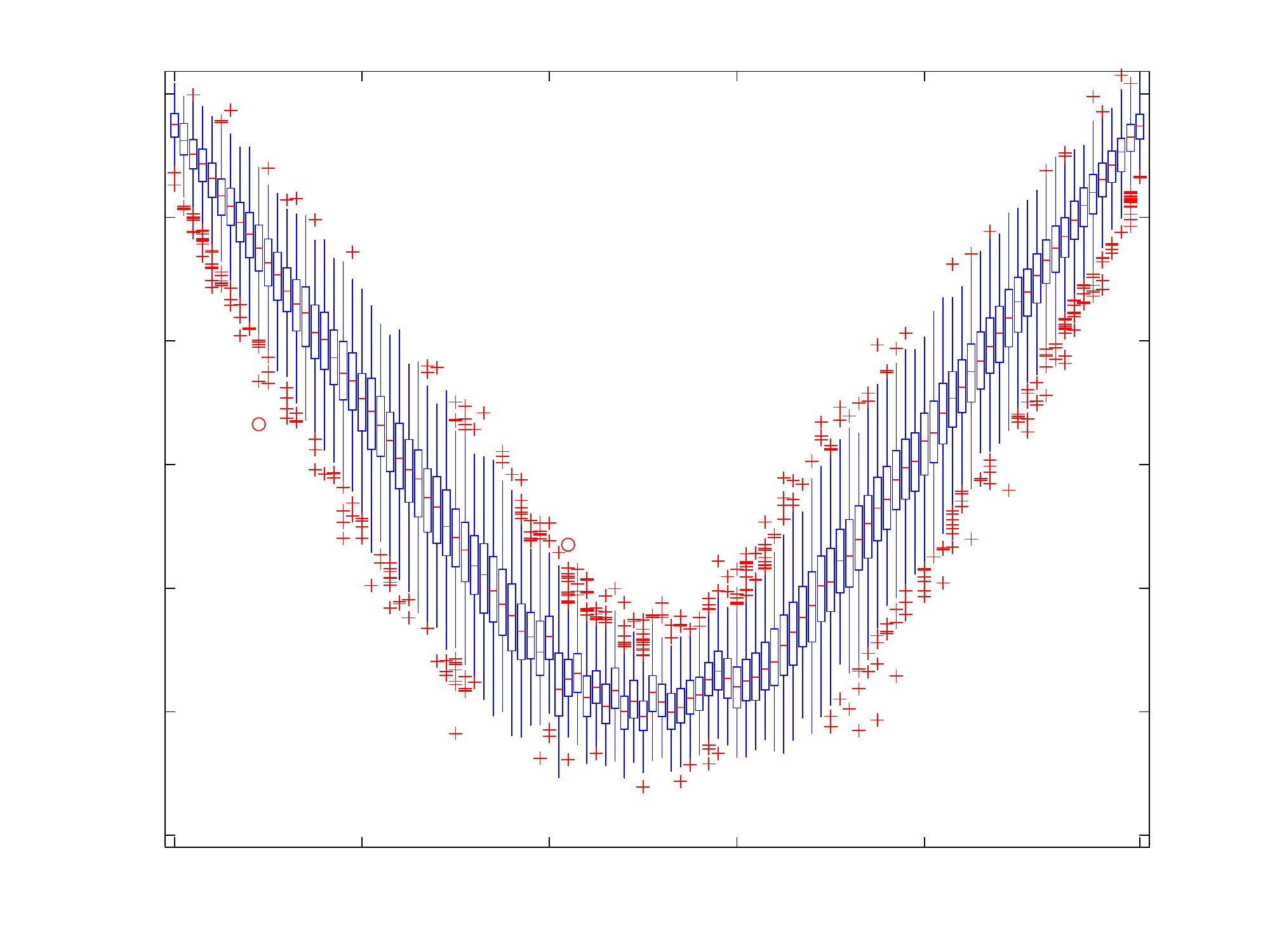}
    \end{picture}%
    \begin{picture}(576,432)(0,0)
    \fontsize{30}{0}
    \selectfont\put(79.1314,42.5189){\makebox(0,0)[t]{\textcolor[rgb]{0,0,0}{{0}}}}
    \fontsize{30}{0}
    \selectfont\put(164.16,42.5189){\makebox(0,0)[t]{\textcolor[rgb]{0,0,0}{{20}}}}
    \fontsize{30}{0}
    \selectfont\put(249.189,42.5189){\makebox(0,0)[t]{\textcolor[rgb]{0,0,0}{{40}}}}
    \fontsize{30}{0}
    \selectfont\put(334.217,42.5189){\makebox(0,0)[t]{\textcolor[rgb]{0,0,0}{{60}}}}
    \fontsize{30}{0}
    \selectfont\put(419.246,42.5189){\makebox(0,0)[t]{\textcolor[rgb]{0,0,0}{{80}}}}
    \fontsize{30}{0}
    \selectfont\put(517.029,42.5189){\makebox(0,0)[t]{\textcolor[rgb]{0,0,0}{{103}}}}
    \fontsize{30}{0}
    \selectfont\put(69.8755,52.9272){\makebox(0,0)[r]{\textcolor[rgb]{0,0,0}{{0}}}}
    \fontsize{30}{0}
    \selectfont\put(69.8755,109.008){\makebox(0,0)[r]{\textcolor[rgb]{0,0,0}{{2}}}}
    \fontsize{30}{0}
    \selectfont\put(69.8755,165.089){\makebox(0,0)[r]{\textcolor[rgb]{0,0,0}{{4}}}}
    \fontsize{30}{0}
    \selectfont\put(69.8755,221.17){\makebox(0,0)[r]{\textcolor[rgb]{0,0,0}{{6}}}}
    \fontsize{30}{0}
    \selectfont\put(69.8755,277.251){\makebox(0,0)[r]{\textcolor[rgb]{0,0,0}{{8}}}}
    \fontsize{30}{0}
    \selectfont\put(69.8755,333.332){\makebox(0,0)[r]{\textcolor[rgb]{0,0,0}{{10}}}}
    \fontsize{30}{0}
    \selectfont\put(69.8755,389.413){\makebox(0,0)[r]{\textcolor[rgb]{0,0,0}{{12}}}}
    \end{picture}
    
    } \\[2ex]
    \scalebox{0.35}{
    \setlength{\unitlength}{1pt}
    \begin{picture}(0,0)
    \includegraphics{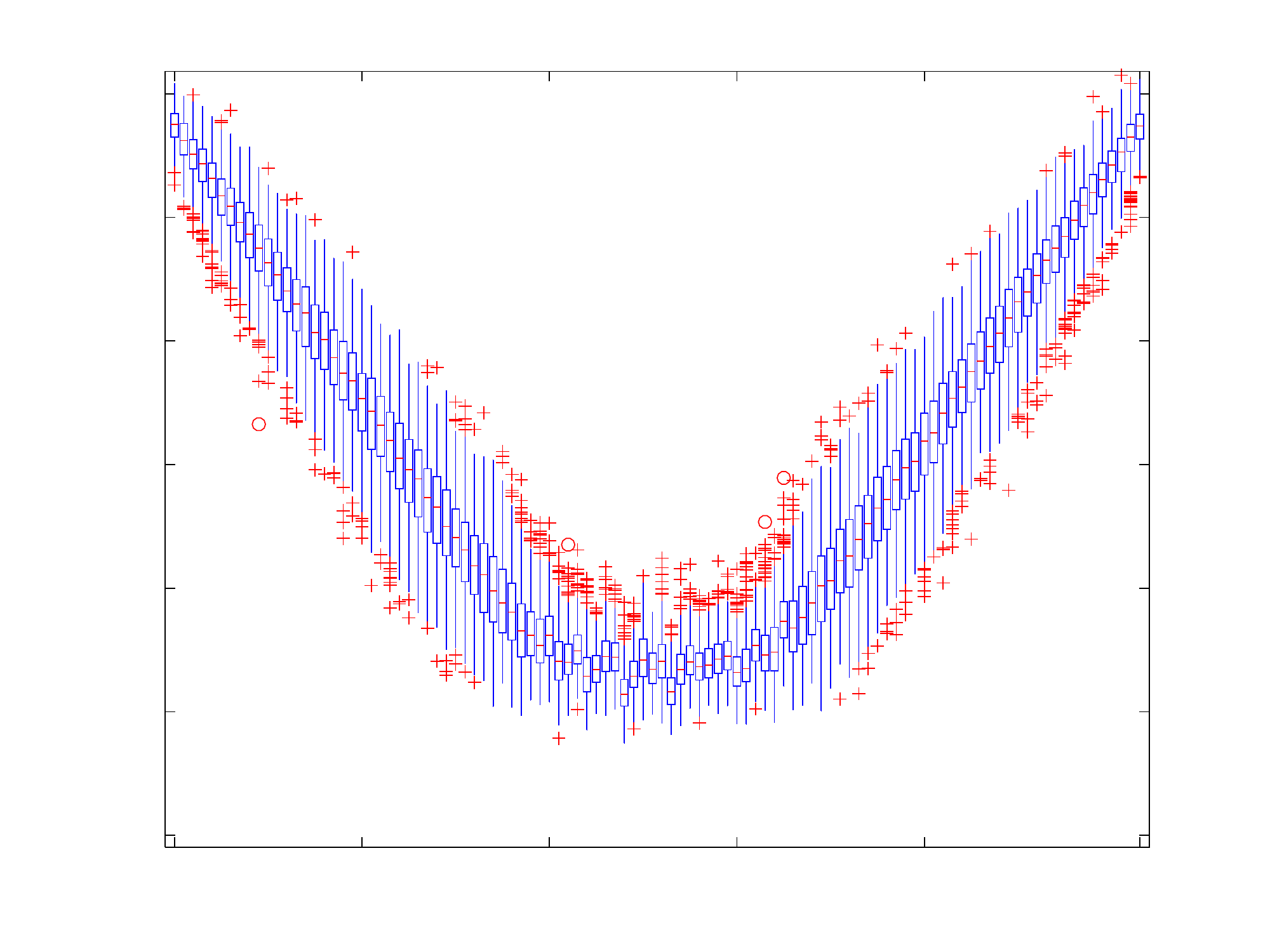}
    \end{picture}%
    \begin{picture}(576,432)(0,0)
    \fontsize{30}{0}
    \selectfont\put(79.1314,42.5189){\makebox(0,0)[t]{\textcolor[rgb]{0,0,0}{{0}}}}
    \fontsize{30}{0}
    \selectfont\put(164.16,42.5189){\makebox(0,0)[t]{\textcolor[rgb]{0,0,0}{{20}}}}
    \fontsize{30}{0}
    \selectfont\put(249.189,42.5189){\makebox(0,0)[t]{\textcolor[rgb]{0,0,0}{{40}}}}
    \fontsize{30}{0}
    \selectfont\put(334.217,42.5189){\makebox(0,0)[t]{\textcolor[rgb]{0,0,0}{{60}}}}
    \fontsize{30}{0}
    \selectfont\put(419.246,42.5189){\makebox(0,0)[t]{\textcolor[rgb]{0,0,0}{{80}}}}
    \fontsize{30}{0}
    \selectfont\put(517.029,42.5189){\makebox(0,0)[t]{\textcolor[rgb]{0,0,0}{{103}}}}
    \fontsize{30}{0}
    \selectfont\put(69.8755,52.9272){\makebox(0,0)[r]{\textcolor[rgb]{0,0,0}{{0}}}}
    \fontsize{30}{0}
    \selectfont\put(69.8755,109.008){\makebox(0,0)[r]{\textcolor[rgb]{0,0,0}{{2}}}}
    \fontsize{30}{0}
    \selectfont\put(69.8755,165.089){\makebox(0,0)[r]{\textcolor[rgb]{0,0,0}{{4}}}}
    \fontsize{30}{0}
    \selectfont\put(69.8755,221.17){\makebox(0,0)[r]{\textcolor[rgb]{0,0,0}{{6}}}}
    \fontsize{30}{0}
    \selectfont\put(69.8755,277.251){\makebox(0,0)[r]{\textcolor[rgb]{0,0,0}{{8}}}}
    \fontsize{30}{0}
    \selectfont\put(69.8755,333.332){\makebox(0,0)[r]{\textcolor[rgb]{0,0,0}{{10}}}}
    \fontsize{30}{0}
    \selectfont\put(69.8755,389.413){\makebox(0,0)[r]{\textcolor[rgb]{0,0,0}{{12}}}}
    \end{picture}
    
    }
  \caption{Boxplot of Monte Carlo simulations to determine typical values
           of the restricted norm ratios $\|g^{(m)}|_{[0,c]}\|_{L^\infty(0,1)} /
           \|g^{(m)}\|_{L^2(0,1)}$ for varying values of~$c$. In all 
           images, the horizontal axis represents the sign forcing 
           parameter~$m$, and for each~$m$ we performed $N=150$
           simulations. All graphs are for $\eps = 10^{-3}$, and from
           top left to bottom middle we consider the cases $c=0.01$,
           $c=0.1$, and $c=0.3$, respectively.}
  \label{fig:varc}
\end{figure}

In order to get a first impression of the effects of sign forcing on
the behavior of random Fourier cosine sums, Figure~\ref{fig:typg} depicts
five instances of~$g^{(m)}$ for the parameter~$\eps = 10^{-2.5}$. In the
simulations, the set of modes for which the sign~$s_k$ is positive gets
chosen by a standard urn model. These first simulations lead to a number
of observations.
\begin{observation}
Consider the sample random Fourier cosine sums shown in Figure~\ref{fig:typg}.
In the associated Table~\ref{tab:prop}, we have recorded a number of norm 
ratios for these five instances of~$g^{(m)}$. In addition to the function value
at zero, we consider the maximum norm over the interval~$[0.2,1]$, and the
maximum norm over the whole interval~$[0,1]$, in each case normalized by the
$L^2(0,1)$-norm of the function. These computations indicate the following:
\begin{itemize}
\item The function value $|g^{(m)}(0)|$ is more or less linearly decreasing
as the sign parameter~$m$ increases from~$0$ to~$\Lambda / 2$.
\item The largest function values over the interval~$[0.2, 1]$ seem to be more
or less of the same order as~$m$ changes, with fluctuations only due to the
presence of noise.
\item For small values of~$m$, the maximum norm of~$g^{(m)}$ over the interval~$[0,1]$
is attained at the point~$x = 0$, while for larger values of~$m$ the largest
function value occurs somewhere in the interval~$[0.2, 1]$.
\end{itemize}
\end{observation}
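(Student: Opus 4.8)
The plan is to isolate the distinguished point $x=0$—where every cosine resonates to the value one—from the rest of the interval, and to note that the $L^2$-normalization is essentially insensitive to both the sign vector and the parameter~$m$. Since the $e_k$ are orthonormal, $\|g^{(m)}\|_{L^2(0,1)}^2=\sum_{k\in\Lambda}c_k^2$ is chi-squared with $|\Lambda|\sim\eps^{-1}$ degrees of freedom, hence $\|g^{(m)}\|_{L^2(0,1)}=\sqrt{|\Lambda|}\,(1+o(1))$ with probability tending to~$1$ by the concentration already exploited in the proof of Theorem~\ref{thm:main}. At $x=0$ one has the exact formula $g^{(m)}(0)=\sqrt2\sum_{k\in\Lambda}s_k|c_k|$; conditioning on the magnitudes $(|c_k|)$ and averaging over the uniformly random placement of the $m$ positive signs gives conditional mean $\bigl(\tfrac{2m}{|\Lambda|}-1\bigr)\sum_k|c_k|$ and conditional variance $O\bigl(\sum_k|c_k|^2\bigr)=O(|\Lambda|)$, while $\sum_k|c_k|=\sqrt{2/\pi}\,|\Lambda|\,(1+o(1))$ by the law of large numbers. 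This yields, with high probability,
\[
  \frac{|g^{(m)}(0)|}{\|g^{(m)}\|_{L^2(0,1)}}
  \;=\; 2\sqrt{\frac{|\Lambda|}{\pi}}\,\Bigl(1-\frac{2m}{|\Lambda|}\Bigr)+O(1)
  \qquad\text{for } 0\le m\le \tfrac{|\Lambda|}{2},
\]
which is linear and decreasing in~$m$, accounts for the last column of Table~\ref{tab:modes} (the case $m=0$), and by the symmetry $m\leftrightarrow|\Lambda|-m$ (flip every sign) also covers $m\ge|\Lambda|/2$. This is the first bullet.

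For the second bullet, fix $\delta>0$ (say $\delta=0.2$) and use the elementary summation identity
\[
  \sum_{k=\km}^{\kp}\cos(k\pi x)
  \;=\;\frac{\sin\bigl((\kp+\tfrac12)\pi x\bigr)-\sin\bigl((\km-\tfrac12)\pi x\bigr)}{2\sin(\pi x/2)},
\]
which shows $\bigl|\sum_{k\in\Lambda}\cos(k\pi x)\bigr|\le 1/|\sin(\pi x/2)|\le C_\delta$ uniformly on $[\delta,1]$. Plugging this bound into the second-moment computation for $g^{(m)}$—which now carries the sign-correlation factor $\mathbb E[s_ks_j]\le1$—gives $\mathbb E\,g^{(m)}(x)^2\le C|\Lambda|$ and $\mathbb E\,|g^{(m)}(x)-g^{(m)}(y)|^2\le C_\eta\bigl(\sum_{k\in\Lambda}k^\eta\bigr)|x-y|^\eta$ on $[\delta,1]$, with constants independent of~$m$. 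Replaying the fractional-Sobolev chaining argument of Theorem~\ref{thm:brute}—after dominating $p$-th moments of the (non-Gaussian, low-degree) increments $g^{(m)}(x)-g^{(m)}(y)$ by a $p$-dependent multiple of $\bigl(\mathbb E|g^{(m)}(x)-g^{(m)}(y)|^2\bigr)^{p/2}$ via a moment-comparison inequality—then yields, exactly as in Corollary~\ref{cor:brute} and Theorem~\ref{thm:main}, that the restricted ratio $\bigl\|g^{(m)}|_{[\delta,1]}\bigr\|_{L^\infty(\delta,1)}/\|g^{(m)}\|_{L^2(0,1)}$ is $O(\eps^{-\delta'/2})$ with high probability for every $\delta'>0$, uniformly in~$m$. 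Hence it grows at most like $\sqrt{\log\eps^{-1}}$ and, to leading order, does not depend on~$m$, matching the near-constant middle column of Table~\ref{tab:prop}.

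The third bullet follows by comparing the two estimates. One always has $\|g^{(m)}\|_{L^\infty(0,1)}\ge|g^{(m)}(0)|$, and by the first bullet the right-hand side dominates the $O(\eps^{-\delta'/2})$ interior bound as soon as $|\Lambda|-2m$ is of larger order than $\sqrt{|\Lambda|\log|\Lambda|}$; moreover the kernel above is sharply peaked only at $x=0$, with side values $O(1/x)$, so in this regime $\mathbb E\,g^{(m)}(x)^2$ is largest in a shrinking neighborhood of $x=0$, and the global maximum is attained at (or immediately adjacent to) the left endpoint and equals $|g^{(m)}(0)|$ up to lower-order terms. Once $m$ comes within $O(\sqrt{|\Lambda|\log|\Lambda|})$ of $|\Lambda|/2$, however, the boundary value has dropped to $O(\sqrt{|\Lambda|})$, which lies below the $O(\sqrt{|\Lambda|\log|\Lambda|})$ interior maximum, so the maximum migrates into $(\delta,1)$; the symmetry $m\leftrightarrow|\Lambda|-m$ handles $m$ near $|\Lambda|$.

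The main obstacle is the genuine uniformity in~$m$ of the interior estimate: once the magnitudes $(|c_k|)$ are fixed, $g^{(m)}$ is a function of a \emph{uniformly random sign slice} of $\{-1,1\}^{|\Lambda|}$ rather than of a product measure, so neither the Gaussian structure nor the hypercontractivity used in Theorem~\ref{thm:brute} applies directly; one must substitute a concentration or moment estimate for Lipschitz (equivalently, degree-two polynomial) functions of the random sign vector, or instead average over the signs first and exploit the cancellation in the Dirichlet-type kernel before handling the $|c_k|$. A secondary technical nuisance is the boundary layer $x\in(0,\delta)$, where $g^{(m)}$ has Lipschitz constant of order $|\Lambda|^2$: pinning the global maximum exactly at $x=0$ for small~$m$ needs a brief extra argument, although the monotone decay of $\mathbb E\,g^{(m)}(x)^2$ away from the resonance point is enough for the qualitative claims recorded in the Observation.
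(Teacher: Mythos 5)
First, be aware that the statement you are proving is an \emph{Observation}: in the paper it carries no proof at all. It is an empirical summary of the five sample functions in Figure~\ref{fig:typg} and the norm ratios recorded in Table~\ref{tab:prop}, and the paper's ``justification'' is deferred and heuristic --- Section~\ref{sec:L2} derives the linear boundary decay via the half-normal expectation of $g^{(m)}(0)$ and the $\chi$-concentration of the $L^2$-norm, culminating in the non-rigorous formulas~(\ref{eqn:leftep1}) and~(\ref{eqn:leftep2}), while the near-constant interior behavior is explained only through the surrogate model of Definition~\ref{def:simplemodel} in Section~\ref{sec:modelextreme}, with the logarithmic bound of Theorem~\ref{thm:main2} proved for the \emph{model}, not for $g^{(m)}$ itself. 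Your first bullet reproduces exactly the paper's Section~\ref{sec:L2} computation (your $2\sqrt{|\Lambda|/\pi}\,(1-2m/|\Lambda|)$ is identical to~(\ref{eqn:leftep2})), so there you are on the paper's route. Your second bullet is a genuinely different and more ambitious route: instead of replacing $g^{(m)}$ by a binomial/normal toy model, you bound the interior supremum of $g^{(m)}$ directly by combining the Dirichlet-kernel cancellation $\bigl|\sum_{k\in\Lambda}\cos(k\pi x)\bigr|\le C_\delta$ on $[\delta,1]$ with the fractional-Sobolev chaining of Theorem~\ref{thm:brute}. If it could be completed, this would buy something the paper does not have: a rigorous, $m$-uniform statement about the actual forced-sign sums, rather than a heuristic about a surrogate.

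That said, two points need flagging. The gap you yourself identify is real and is the crux: once the signs are constrained to a slice with exactly $m$ positives, $g^{(m)}(x)-g^{(m)}(y)$ is no longer Gaussian, so the step ``bound $p$-th moments by the $p/2$ power of the second moment'' in the proof of Theorem~\ref{thm:brute} is unavailable, and some substitute (concentration for linear functionals of a random slice of the hypercube, conditionally on the $|c_k|$, combined with control of the half-normal magnitudes) must be supplied before Corollary~\ref{cor:brute} and Theorem~\ref{thm:main} can be ``replayed.'' The second point is an overclaim you do not flag: from ``the restricted ratio is $O(\eps^{-\delta'/2})$ with high probability for every $\delta'>0$'' you conclude it ``grows at most like $\sqrt{\log\eps^{-1}}$.'' This does not follow --- the constants blow up as $\delta'\to 0$ (the paper stresses exactly this after Theorem~\ref{thm:brute} and before Theorem~\ref{thm:main}), so the polynomial family of bounds yields only sub-polynomial growth, not a logarithmic rate. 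The logarithmic rate in the paper comes solely from the independence structure imposed in Definition~\ref{def:simplemodel}. For the qualitative claims actually made in the Observation (linear decay at $x=0$, roughly $m$-independent interior magnitude, and the resulting crossover of the location of the maximum), your argument is sound in outline and sharper than what the paper offers; just do not present the $\sqrt{\log}$ rate as a consequence of the chaining bound.
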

While the above observation is merely based on a few sample
functions, it does seem to indicate that depending on the number~$m$
of positive coefficient signs~$s_k$, the maximum norm of typical
functions~$g^{(m)}$ is either attained at the left domain boundary,
or somewhere away from this boundary point.

At first glance, the above observation might seem surprising. For
a fairly large contiguous range of sign forcing parameters which start
at $m = 0$, the maximum norm of the random Fourier cosine sum~$f$ is
attained exactly at the left boundary point, while from some $m$-value
onwards, the maximum is suddenly observed in the interval~$[0.2, 1]$.
To study this phenomenon further, we performed Monte Carlo simulations
to determine typical values of the restricted norm ratios
$\|g^{(m)}|_{[0,c]}\|_{L^\infty(0,1)} / \|g^{(m)}\|_{L^2(0,1)}$
for varying intervals~$[0,c]$. For three different values of~$c$ and
every value of~$m$ between~$0$ and~$|\Lambda|$, we determined the
norm ratios for $\eps = 10^{-3}$ via $N = 150$ simulations. The results
are shown in the boxplots of Figure~\ref{fig:varc}. Notice that for
$c = 0.01$, the normalized maximum norm over the interval~$[0,c]$
decays linearly from about~$11.5$ to almost zero, as~$m$ ranges from
zero to~$|\Lambda|/2$. As~$m$ increases further, the behavior reverses.
In this case, one would expect that the maximum norm closely resembles
the behavior of~$g^{(m)}$ at $x = 0$, due to the smallness of the interval.
If instead of~$c = 0.01$ we consider intervals with right endpoint
$c = 0.1$ or $c = 0.3$, the norm ratios do not get as close to zero
as before. In fact, they seem to bottom out at about~$3$. We have
observed that as~$c$ increases beyond around $c \approx 0.2$, the
bottom horizontal part shows hardly any change anymore.

\begin{figure}
\scalebox{0.4}{
\setlength{\unitlength}{1pt}
\begin{picture}(0,0)
\includegraphics{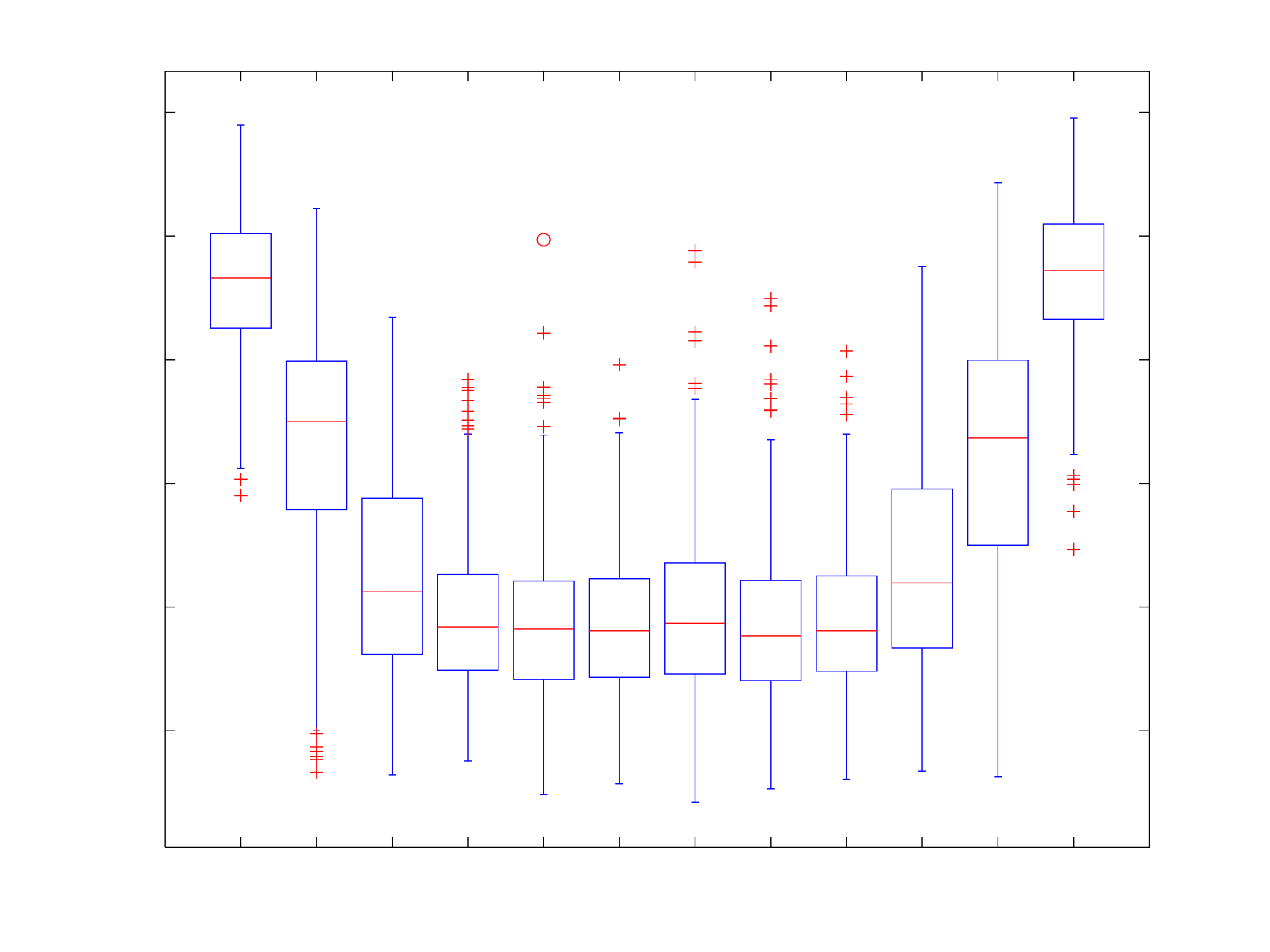}
\end{picture}%
\begin{picture}(576,432)(0,0)
\fontsize{20}{0}
\selectfont\put(109.218,42.5189){\makebox(0,0)[t]{\textcolor[rgb]{0,0,0}{{0}}}}
\fontsize{20}{0}
\selectfont\put(143.557,42.5189){\makebox(0,0)[t]{\textcolor[rgb]{0,0,0}{{1}}}}
\fontsize{20}{0}
\selectfont\put(177.895,42.5189){\makebox(0,0)[t]{\textcolor[rgb]{0,0,0}{{2}}}}
\fontsize{20}{0}
\selectfont\put(212.234,42.5189){\makebox(0,0)[t]{\textcolor[rgb]{0,0,0}{{3}}}}
\fontsize{20}{0}
\selectfont\put(246.572,42.5189){\makebox(0,0)[t]{\textcolor[rgb]{0,0,0}{{4}}}}
\fontsize{20}{0}
\selectfont\put(280.911,42.5189){\makebox(0,0)[t]{\textcolor[rgb]{0,0,0}{{5}}}}
\fontsize{20}{0}
\selectfont\put(315.249,42.5189){\makebox(0,0)[t]{\textcolor[rgb]{0,0,0}{{6}}}}
\fontsize{20}{0}
\selectfont\put(349.588,42.5189){\makebox(0,0)[t]{\textcolor[rgb]{0,0,0}{{7}}}}
\fontsize{20}{0}
\selectfont\put(383.926,42.5189){\makebox(0,0)[t]{\textcolor[rgb]{0,0,0}{{8}}}}
\fontsize{20}{0}
\selectfont\put(418.265,42.5189){\makebox(0,0)[t]{\textcolor[rgb]{0,0,0}{{9}}}}
\fontsize{20}{0}
\selectfont\put(452.603,42.5189){\makebox(0,0)[t]{\textcolor[rgb]{0,0,0}{{10}}}}
\fontsize{20}{0}
\selectfont\put(486.942,42.5189){\makebox(0,0)[t]{\textcolor[rgb]{0,0,0}{{11}}}}
\fontsize{20}{0}
\selectfont\put(69.8755,100.385){\makebox(0,0)[r]{\textcolor[rgb]{0,0,0}{{2}}}}
\fontsize{20}{0}
\selectfont\put(69.8755,156.495){\makebox(0,0)[r]{\textcolor[rgb]{0,0,0}{{2.5}}}}
\fontsize{20}{0}
\selectfont\put(69.8755,212.606){\makebox(0,0)[r]{\textcolor[rgb]{0,0,0}{{3}}}}
\fontsize{20}{0}
\selectfont\put(69.8755,268.716){\makebox(0,0)[r]{\textcolor[rgb]{0,0,0}{{3.5}}}}
\fontsize{20}{0}
\selectfont\put(69.8755,324.827){\makebox(0,0)[r]{\textcolor[rgb]{0,0,0}{{4}}}}
\fontsize{20}{0}
\selectfont\put(69.8755,380.937){\makebox(0,0)[r]{\textcolor[rgb]{0,0,0}{{4.5}}}}
\end{picture}

}
\scalebox{0.4}{
\setlength{\unitlength}{1pt}
\begin{picture}(0,0)
\includegraphics{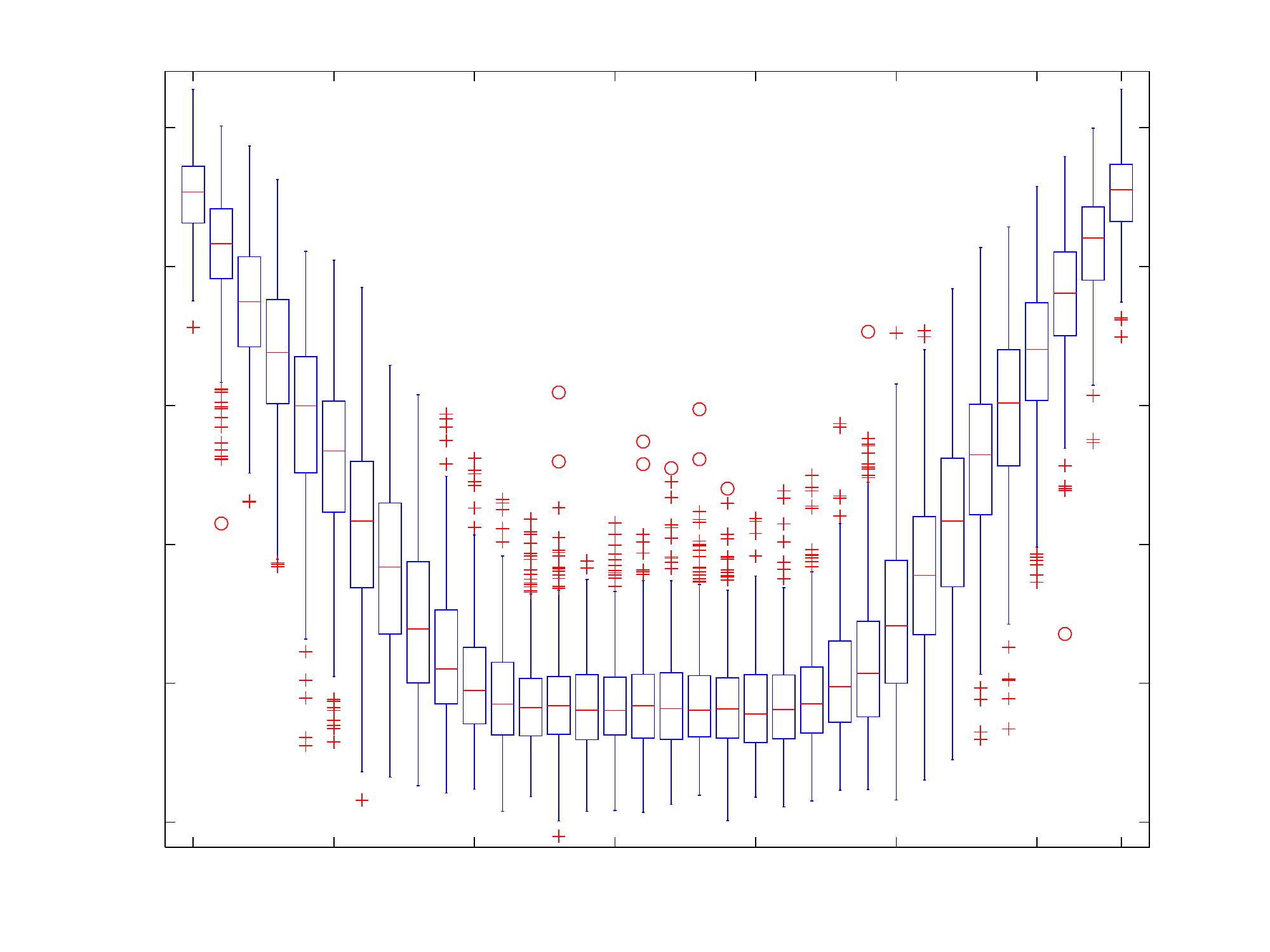}
\end{picture}%
\begin{picture}(576,432)(0,0)
\fontsize{20}{0}
\selectfont\put(87.6343,42.5189){\makebox(0,0)[t]{\textcolor[rgb]{0,0,0}{{0}}}}
\fontsize{20}{0}
\selectfont\put(151.406,42.5189){\makebox(0,0)[t]{\textcolor[rgb]{0,0,0}{{5}}}}
\fontsize{20}{0}
\selectfont\put(215.177,42.5189){\makebox(0,0)[t]{\textcolor[rgb]{0,0,0}{{10}}}}
\fontsize{20}{0}
\selectfont\put(278.949,42.5189){\makebox(0,0)[t]{\textcolor[rgb]{0,0,0}{{15}}}}
\fontsize{20}{0}
\selectfont\put(342.72,42.5189){\makebox(0,0)[t]{\textcolor[rgb]{0,0,0}{{20}}}}
\fontsize{20}{0}
\selectfont\put(406.491,42.5189){\makebox(0,0)[t]{\textcolor[rgb]{0,0,0}{{25}}}}
\fontsize{20}{0}
\selectfont\put(470.263,42.5189){\makebox(0,0)[t]{\textcolor[rgb]{0,0,0}{{30}}}}
\fontsize{20}{0}
\selectfont\put(508.526,42.5189){\makebox(0,0)[t]{\textcolor[rgb]{0,0,0}{{33}}}}
\fontsize{20}{0}
\selectfont\put(69.8755,58.9059){\makebox(0,0)[r]{\textcolor[rgb]{0,0,0}{{2}}}}
\fontsize{20}{0}
\selectfont\put(69.8755,121.931){\makebox(0,0)[r]{\textcolor[rgb]{0,0,0}{{3}}}}
\fontsize{20}{0}
\selectfont\put(69.8755,184.956){\makebox(0,0)[r]{\textcolor[rgb]{0,0,0}{{4}}}}
\fontsize{20}{0}
\selectfont\put(69.8755,247.981){\makebox(0,0)[r]{\textcolor[rgb]{0,0,0}{{5}}}}
\fontsize{20}{0}
\selectfont\put(69.8755,311.007){\makebox(0,0)[r]{\textcolor[rgb]{0,0,0}{{6}}}}
\fontsize{20}{0}
\selectfont\put(69.8755,374.032){\makebox(0,0)[r]{\textcolor[rgb]{0,0,0}{{7}}}}
\end{picture}
}

\scalebox{0.4}{
\setlength{\unitlength}{1pt}
\begin{picture}(0,0)
\includegraphics{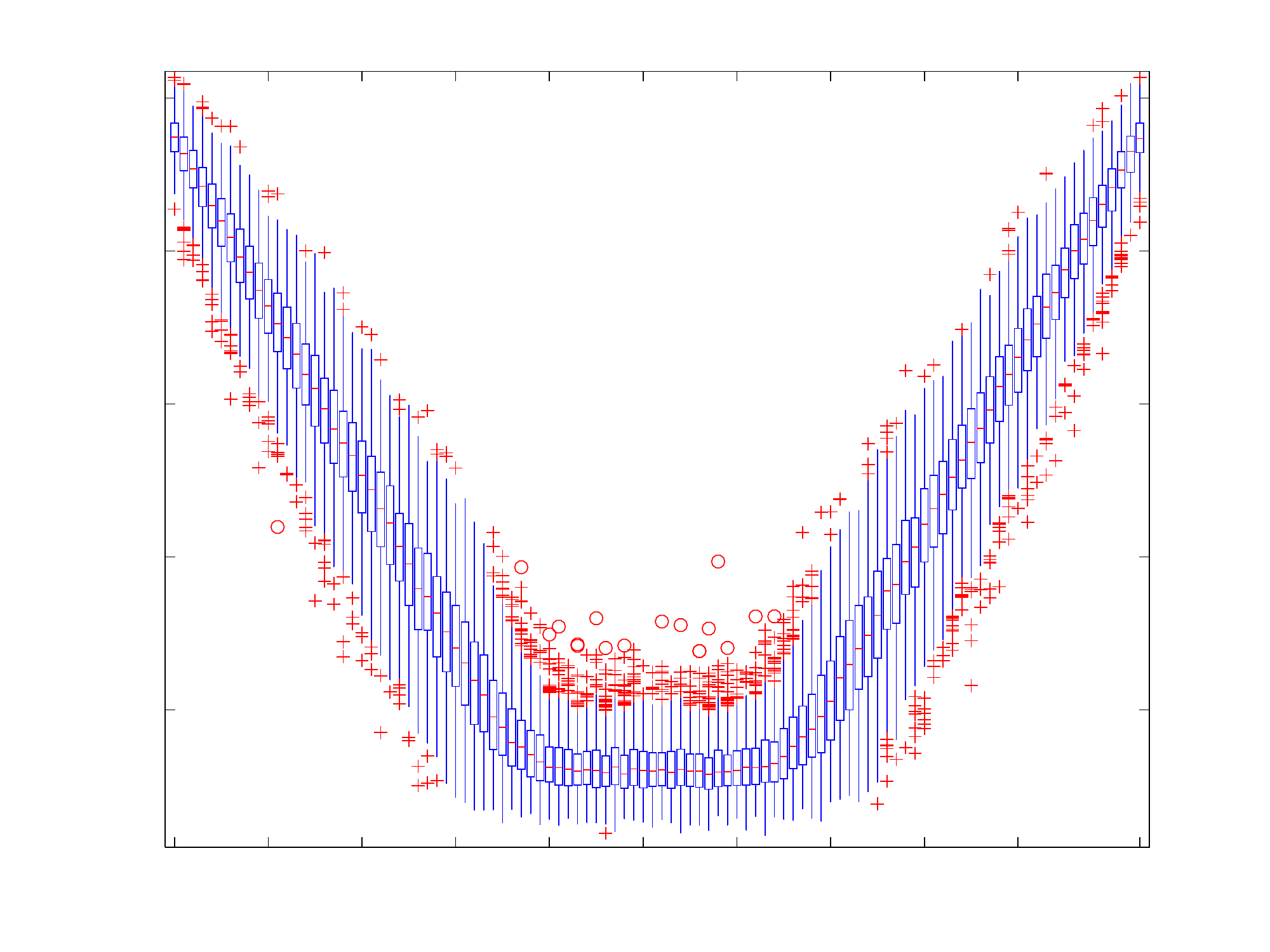}
\end{picture}%
\begin{picture}(576,432)(0,0)
\fontsize{20}{0}
\selectfont\put(79.1314,42.5189){\makebox(0,0)[t]{\textcolor[rgb]{0,0,0}{{0}}}}
\fontsize{20}{0}
\selectfont\put(121.646,42.5189){\makebox(0,0)[t]{\textcolor[rgb]{0,0,0}{{10}}}}
\fontsize{20}{0}
\selectfont\put(164.16,42.5189){\makebox(0,0)[t]{\textcolor[rgb]{0,0,0}{{20}}}}
\fontsize{20}{0}
\selectfont\put(206.674,42.5189){\makebox(0,0)[t]{\textcolor[rgb]{0,0,0}{{30}}}}
\fontsize{20}{0}
\selectfont\put(249.189,42.5189){\makebox(0,0)[t]{\textcolor[rgb]{0,0,0}{{40}}}}
\fontsize{20}{0}
\selectfont\put(291.703,42.5189){\makebox(0,0)[t]{\textcolor[rgb]{0,0,0}{{50}}}}
\fontsize{20}{0}
\selectfont\put(334.217,42.5189){\makebox(0,0)[t]{\textcolor[rgb]{0,0,0}{{60}}}}
\fontsize{20}{0}
\selectfont\put(376.731,42.5189){\makebox(0,0)[t]{\textcolor[rgb]{0,0,0}{{70}}}}
\fontsize{20}{0}
\selectfont\put(419.246,42.5189){\makebox(0,0)[t]{\textcolor[rgb]{0,0,0}{{80}}}}
\fontsize{20}{0}
\selectfont\put(461.76,42.5189){\makebox(0,0)[t]{\textcolor[rgb]{0,0,0}{{90}}}}
\fontsize{20}{0}
\selectfont\put(517.029,42.5189){\makebox(0,0)[t]{\textcolor[rgb]{0,0,0}{{103}}}}
\fontsize{20}{0}
\selectfont\put(69.8755,109.89){\makebox(0,0)[r]{\textcolor[rgb]{0,0,0}{{4}}}}
\fontsize{20}{0}
\selectfont\put(69.8755,179.298){\makebox(0,0)[r]{\textcolor[rgb]{0,0,0}{{6}}}}
\fontsize{20}{0}
\selectfont\put(69.8755,248.706){\makebox(0,0)[r]{\textcolor[rgb]{0,0,0}{{8}}}}
\fontsize{20}{0}
\selectfont\put(69.8755,318.113){\makebox(0,0)[r]{\textcolor[rgb]{0,0,0}{{10}}}}
\fontsize{20}{0}
\selectfont\put(69.8755,387.521){\makebox(0,0)[r]{\textcolor[rgb]{0,0,0}{{12}}}}
\end{picture}
}
\scalebox{0.4}{
\setlength{\unitlength}{1pt}
\begin{picture}(0,0)
\includegraphics{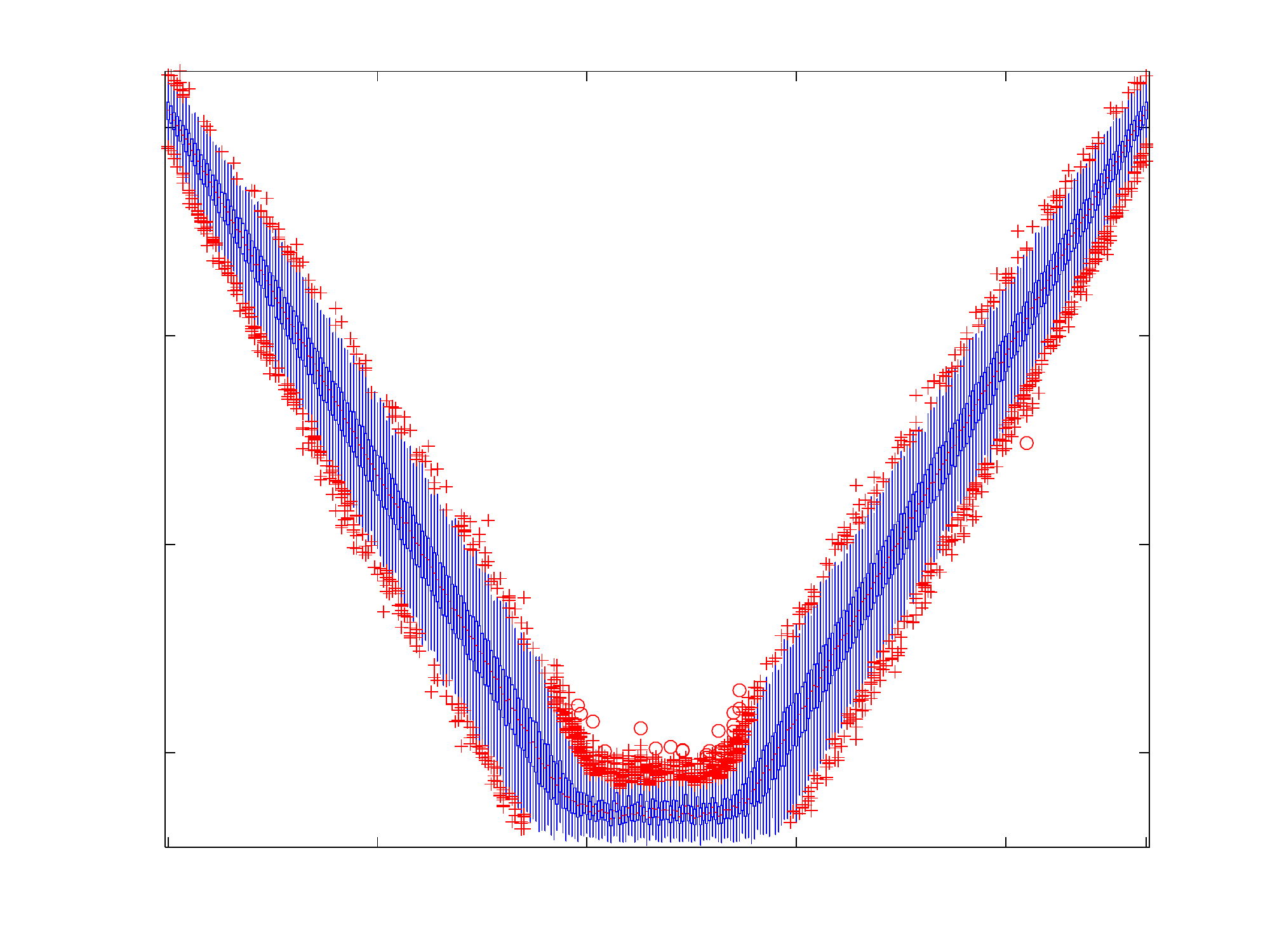}
\end{picture}%
\begin{picture}(576,432)(0,0)
\fontsize{20}{0}
\selectfont\put(76.2368,42.5189){\makebox(0,0)[t]{\textcolor[rgb]{0,0,0}{{0}}}}
\fontsize{20}{0}
\selectfont\put(171.216,42.5189){\makebox(0,0)[t]{\textcolor[rgb]{0,0,0}{{70}}}}
\fontsize{20}{0}
\selectfont\put(266.194,42.5189){\makebox(0,0)[t]{\textcolor[rgb]{0,0,0}{{140}}}}
\fontsize{20}{0}
\selectfont\put(361.173,42.5189){\makebox(0,0)[t]{\textcolor[rgb]{0,0,0}{{210}}}}
\fontsize{20}{0}
\selectfont\put(456.152,42.5189){\makebox(0,0)[t]{\textcolor[rgb]{0,0,0}{{280}}}}
\fontsize{20}{0}
\selectfont\put(519.923,42.5189){\makebox(0,0)[t]{\textcolor[rgb]{0,0,0}{{327}}}}
\fontsize{20}{0}
\selectfont\put(69.8755,90.3527){\makebox(0,0)[r]{\textcolor[rgb]{0,0,0}{{5}}}}
\fontsize{20}{0}
\selectfont\put(69.8755,184.968){\makebox(0,0)[r]{\textcolor[rgb]{0,0,0}{{10}}}}
\fontsize{20}{0}
\selectfont\put(69.8755,279.583){\makebox(0,0)[r]{\textcolor[rgb]{0,0,0}{{15}}}}
\fontsize{20}{0}
\selectfont\put(69.8755,374.198){\makebox(0,0)[r]{\textcolor[rgb]{0,0,0}{{20}}}}
\end{picture}
}
\caption{Boxplots of the norm ratios $\|g^{(m)}\|_{L^\infty(0,1)} /
           \|g^{(m)}\|_{L^2(0,1)}$ determined via Monte Carlo simulations
           with sample size $N=1000$ for each $m$-value. From top left to
           bottom right the images correspond to the parameter values
           $\eps = 10^{-2}$, $10^{-2.5}$, $10^{-3}$, and~$10^{-3.5}$,
           respectively. While the vertical axis measures the norm ratios,
           the horizontal axis represents~$m+1$, where~$m$ is the sign
           forcing parameter ranging from~$0$ to~$|\Lambda|$. All 
           four graphs resemble ``tubs,'' with linearly growing sides and
           flat bottoms. Notice that while the ratio values at the bottom
           seem to vary only slightly with~$\eps$, the upper reaches of
           the sides increase with decreasing~$\eps$.}
  \label{fig:wanne}
\end{figure}

%

Combined, Figures~\ref{fig:typg} and~\ref{fig:varc} indicate that
in order to study the maximum norm behavior of random Fourier cosine
sums, it makes sense to relate the norm ratios to the number~$m$ of
positive coefficients. As a final numerical experiment, we created
plots similar to the ones in Figure~\ref{fig:varc}, but this time 
for $c = 1$ and varying $\eps$-values. The results from these Monte
Carlo simulations are shown in Figure~\ref{fig:wanne}, and associated
quantitative numerical values can be found in Table~\ref{tab:propTub}.
For this experiment, we have performed Monte Carlo simulations with
sample size $N=1000$ for each $m$-value, and we considered the parameter
values $\eps = 10^{-2}$, $10^{-2.5}$, $10^{-3}$, and~$10^{-3.5}$, from
top left to bottom right, respectively. As before, the vertical axis
measures the norm ratios, while the horizontal axis represents~$m+1$,
where~$m$ is the sign forcing parameter ranging from~$0$ to~$|\Lambda|$.
We would like to point out the striking similarity between the plot
for $\eps = 10^{-3}$ with the last image in Figure~\ref{fig:varc} ---
increasing the value of~$c$ from~$0.3$ to~$1$ has hardly any effect
on its shape.
\begin{table}
  \centering
  \begin{tabular}{c|c|ccc}
    $n$ & $\eps = 10^{-n}$ &
      $\displaystyle\E\frac{|g^{(0)}(0)|}{\|g^{(0)}\|_{L^2(0,1)}}$ &
      $\displaystyle\E\frac{\|g^{(|\Lambda|/2)}\|_{L^\infty(0,1)}}{\|g^{(|\Lambda|/2)}
      \|_{L^2(0,1)}}$ \\ \hline
    2 & 0.01 & 4 & 2.5 \\
    2.5 & 0.003162 & 7 & 2.8 \\
    3 & 0.001 & 11 & 3.0 \\
    3.5 & 0.0003162 & \db{21} & \db{3.2}
  \end{tabular}
  \caption{Quantitative information for the tub plots in Figure~\ref{fig:wanne}.
           The third column gives the expected value of the ratio
           $|g^{(0)}(0)| / \|g^{(0)}\|_{L^2(0,1)}$, which corresponds to
           the height of the linear tub sides, while the fourth column represents
           the heights of the tub bottoms. The latter value is given by
           $\|g^{(m)}\|_{L^\infty(0,1)} / \|g^{(m)}\|_{L^2(0,1)}$ for
           $m = |\Lambda|/2$.
           }
  \label{tab:propTub}
\end{table}

The four plots in Figure~\ref{fig:wanne} share some interesting common
features. All four graphs resemble ``tubs,'' with linearly decaying
or growing sides and flat bottoms. The ratio values at the bottom of
the tubs seem to vary only slightly with~$\eps$, whereas the upper
reaches of the sides increase sharply with decreasing~$\eps$. This
leads to the following observation.
\begin{observation}
The largest norm ratio values are achieved when there is a strong
dominance of one sign in the coefficients, i.e., most coefficients are
positive or most are negative. This occurs close to the extreme values
$m = 0$ and $m = |\Lambda|$. If this sign dominance decreases, the
observed norm ratios decline approximately linearly up to the point
where they reach a more or less constant level. The graphs are clearly
symmetric with respect to $m = |\Lambda|/2$, due to our definition
of~$g^{(m)}$ in~(\ref{eq:fnc:gm}).
\end{observation}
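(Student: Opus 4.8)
The plan is to separate the three assertions of the observation --- the symmetry about $m=|\Lambda|/2$, the linearly sloping sides, and the flat bottom --- and attack each with tools already in hand. The \emph{symmetry} is immediate: the map $s_k \mapsto -s_k$ is a measure-preserving bijection from the uniform distribution over sign vectors with exactly $m$ positive entries onto the one with exactly $|\Lambda|-m$ positive entries, it leaves the joint law of the $|c_k|$ untouched, and it sends $g^{(m)}$ to $-g^{(m)}$. Since both $\|\cdot\|_{L^\infty(0,1)}$ and $\|\cdot\|_{L^2(0,1)}$ are invariant under $g\mapsto -g$, the law of the norm ratio at parameter $m$ coincides with the one at parameter $|\Lambda|-m$, which is exactly the claimed symmetry.

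For the \emph{sloping sides}, the key point is that the entire value of $g^{(m)}$ at the left endpoint is carried by one signed sum: since all cosines equal one at $x=0$,
\begin{displaymath}
  g^{(m)}(0) = \sqrt{2}\sum_{k\in\Lambda} s_k\,|c_k| \; ,
\end{displaymath}
whose expectation over the urn model and the $|c_k|$ equals $\sqrt{2}\,(2m-|\Lambda|)\,\E|c_1| = \tfrac{2}{\sqrt{\pi}}\,(2m-|\Lambda|)$. Standard concentration (a McDiarmid-type bound for the urn variables together with the sub-Gaussian concentration of the $|c_k|$) shows that for $m$ a fixed fraction away from $|\Lambda|/2$ one has $|g^{(m)}(0)| = \tfrac{2}{\sqrt{\pi}}|2m-|\Lambda||\,(1+o(1))$ with high probability, while $\|g^{(m)}\|_{L^2(0,1)}^2 = \sum_{k\in\Lambda} c_k^2 = |\Lambda|(1+o(1))$. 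Hence the boundary ratio is $2\sqrt{|\Lambda|/\pi}\,\bigl|1-2m/|\Lambda|\bigr|\,(1+o(1))$, a linear function of $m$ whose value at $m=0$ is precisely the last column of Table~\ref{tab:modes}. To pass from the boundary value to the \emph{full} maximum norm one must show that on $[\rho,1]$ for fixed $\rho>0$ the random sum $\sum_{k\in\Lambda} s_k|c_k|\sqrt{2}\cos(k\pi x)$ cannot compete; this follows from a chaining / fractional-Sobolev estimate in the spirit of Theorem~\ref{thm:brute}, restricted to $[\rho,1]$, which yields an $O(\sqrt{|\Lambda|\log|\Lambda|})$ bound there --- of strictly smaller order than $|2m-|\Lambda||\asymp|\Lambda|$ once $m$ is a bounded fraction from the center. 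Consequently $\|g^{(m)}\|_{L^\infty(0,1)} = |g^{(m)}(0)|$ with high probability in this regime, and the norm ratio is asymptotically linear in $m$, as observed.

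For the \emph{flat bottom}, $m$ lies within $o(|\Lambda|)$ of $|\Lambda|/2$, the linear boundary term has dropped to the scale of the interior fluctuations, and the maximum is attained in the interior. There, after conditioning on the $|c_k|$, the relevant object is a near-stationary random trigonometric sum with increment variance controlled exactly as in the proof of Theorem~\ref{thm:brute}; a two-sided estimate --- upper bound from the fractional-Sobolev embedding, lower bound from a Paley--Zygmund / second-moment argument on a single well-chosen interior point or a short subinterval --- pins $\|g^{(m)}\|_{L^\infty(0,1)}$ between constant multiples of $\sqrt{|\Lambda|\log|\Lambda|}$, so the ratio lies between positive constants times $\sqrt{\log\eps^{-1}}$: slowly growing, hence ``more or less constant'' on the scales displayed in Figure~\ref{fig:wanne}. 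One then checks that the two regimes are exhaustive: the linear side formula and the $\sqrt{\log}$ interior bound cross at $|2m-|\Lambda||\asymp\sqrt{|\Lambda|\log|\Lambda|}=o(|\Lambda|)$, which is exactly where the plotted ``tubs'' bend.

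The \textbf{main obstacle} is the interior estimate near the center. In this regime the process $\sum_{k\in\Lambda} s_k|c_k|\sqrt{2}\cos(k\pi x)$ is an almost-stationary random trigonometric sum whose frequencies all live in the narrow band $[\km,\kp]$ with $\kp/\km\to\alphap/\alpham$ bounded; obtaining a sharp enough control of its supremum \emph{and} of its minimum --- matching upper and lower bounds rather than merely the $\sqrt{\log}$ order --- is genuinely delicate, both because the Neumann boundary at $x=0,1$ destroys exact stationarity and because the coefficients $|c_k|$ are not Gaussian. A clean rigorous version will most likely have to settle for the two-sided \emph{order} statements above, leaving the precise constant, and thus the exact height of the tub bottom, to the modeling and numerics of Section~\ref{sec:modelextreme}.
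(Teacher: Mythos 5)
The paper offers no proof of this observation: it is stated as an empirical summary of the Monte Carlo experiments behind Figure~\ref{fig:wanne} and Table~\ref{tab:propTub}, with the sloping sides explained only heuristically in Section~\ref{sec:L2} via the half-normal expectation of $g^{(m)}(0)$ (formulas (\ref{eqn:leftep1}) and (\ref{eqn:leftep2})), and the flat bottom explained only through the surrogate model of Section~\ref{sec:modelextreme}. You are therefore attempting something strictly stronger, and your outline is essentially a rigorous version of what the paper does in pieces: your symmetry argument is the paper's one-line remark made precise; your boundary computation $\mathbb{E}\,g^{(m)}(0)=\tfrac{2}{\sqrt{\pi}}(2m-|\Lambda|)$ together with the $\chi^2$ concentration of $\|g^{(m)}\|_{L^2(0,1)}^2$ reproduces (\ref{eqn:leftep2}); and your direct analysis of the interior supremum replaces the paper's simplified extreme value model. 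This is a legitimate and more informative route, and you correctly identify the interior estimate as the crux.

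Two technical points deserve care. First, the machinery of Theorem~\ref{thm:brute} does not deliver the $O(\sqrt{|\Lambda|\log|\Lambda|})$ interior bound you invoke: as executed in the paper it yields only $O(|\Lambda|^{(1+\delta)/2})$ with constants exploding as $\delta\to 0$. That suffices for the sloping-side regime, where the boundary term is of order $|\Lambda|$, but for the plateau you need a genuine Dudley/Salem--Zygmund entropy bound; moreover the process $\sum_{k}s_k|c_k|\sqrt{2}\cos(k\pi x)$ is \emph{not} centered, and its mean function $\tfrac{2}{\sqrt{\pi}}\sum_k s_k\cos(k\pi x)$ must be controlled separately (a Dirichlet-kernel bound for the deterministic part of the urn signs plus a Salem--Zygmund bound for their centered fluctuations). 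Second, Paley--Zygmund at a single interior point gives a lower bound of order $\sqrt{|\Lambda|}$ only, without the logarithm; matching the upper bound in order requires $\asymp\bar k$ well-separated points, which is exactly what Definition~\ref{def:simplemodel} formalizes. For the qualitative claim of a ``more or less constant level'' the resulting gap between $c$ and $C\sqrt{\log\eps^{-1}}$ after normalization is harmless, so these are repairs rather than obstructions, but they should be stated if the sketch is to be turned into a proof.
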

Together with our observations surrounding Figure~\ref{fig:typg}
this makes the case for the following scenario. For strongly asymmetric
sign distributions, where for example all signs are positive or all are
negative, the global maximum of the function~$g^{(m)}$ occurs at $x = 0$,
and in this case, one observes linear norm ratio decay as the sign 
distribution becomes more even. In the latter case, the function value
at $x = 0$ is dominated by the function behavior away from the left 
endpoint of the domain. Consequently, the maximum is attained somewhere
in the interior of the domain, and the maximum norm ratio stays more
or less constant.

While the next two sections will be devoted to a more quantitative
explanation of the above observations, we close this section with a
brief remark on the likelihood of encountering asymmetric sign
distributions.
\begin{remark} \label{rem:bin}
As we will see in more detail later, the norm ratios observed 
in the middle of the tubs shown in Figure~\ref{fig:wanne} are precisely
the norm ratios that are responsible for typical instances of the random
Fourier cosine sum~$f$ defined in~(\ref{eq:fnc}). One can readily see
that the sign distribution of the sequence $s_0, \ldots, s_{|\Lambda|}$
follows a standard binomial distribution. In other words, sign distributions
with $m$-values close to~$0$ or~$|\Lambda|$ are extremely unlikely, while
distributions with~$m$ close to~$|\Lambda|/2$ are most probable. If we
now assume that~$m$ is not chosen by us, but rather assigned randomly
according to a binomial distribution, then the probability of the sign
forcing parameter~$m$ taking a value along the flat tub bottom is given
by an expression of the form
\begin{displaymath}
  \P\left( \ell \leq m \leq r \right) =
  \sum_{m=\ell}^r \Bin(|\Lambda|, 0.5, m) \; ,
\end{displaymath}
where~$\ell$ and~$r$ denote the left and right endpoints of the bottom
part of the tub. For each of the $\eps$-values used in Figure~\ref{fig:wanne},
this probability can be computed as~$0.9077$, $0.9887$, $0.9999$, and
almost~$1$, from top left to lower right, respectively. In other words,
as~$\eps$ decreases, the probability that a random Fourier cosine sum
has a sign forcing parameter~$m$ which lies in the bottom of the tub
is basically one --- and this explains why typical~$f$ exhibit small
maximum norm ratios.
\end{remark}
%
%
%
\subsection{The Role of the Left Endpoint of the Domain}
\label{sec:L2}
%
%
%
Our simulations have already shown that the left endpoint of the
interval~$G = [0,1]$ plays a special role in the formation of
the maximum norm of a random Fourier cosine sum. Since this 
special role seems to occur only for strongly asymmetric sign
distributions, we consider in the present section the case
where all signs are equal. For the sake of definiteness, we
assume that all signs~$s_k$ are equal to~$+1$, and in this case 
the random Fourier cosine sum is given by
\begin{displaymath}
  g^{(|\Lambda|)} = \sum_{k\in\Lambda} |c_k| \cdot
  \sqrt{2} \cos(k\cdot\pi\cdot x) \; .
\end{displaymath}
Since the cosine functions all attain their maximum value~$1$
at $x = 0$, one can easily see that the maximum norm of~$g^{(|\Lambda|)}$
is given by
\begin{displaymath}
  \left\| g^{(|\Lambda|)} \right\|_{L^\infty(0,1)} =
  g^{(|\Lambda|)}(0) = \sqrt{2} \cdot \sum_{k\in\Lambda} |c_k| \; .
\end{displaymath}
The sum of the absolute values of the standard normally distributed
coefficients~$c_k$ is half-normally distributed, and it therefore
has the expected value $|\Lambda|\cdot \sqrt{2 / \pi}$ with
variance~$|\Lambda|$. In other words, one has
\begin{displaymath}
  \mathbb{E}\left\| g^{(|\Lambda|)} \right\|_{L^\infty(0,1)} =
  \frac{2 |\Lambda|}{\sqrt{\pi}} \; .
\end{displaymath}
We now turn our attention to the $L^2(0,1)$-norm of the
function~$g^{(|\Lambda|)}$. Due to the orthonormality of
the basis functions~$e_k$, this norm can be computed via
$\|g^{(|\Lambda|)}\|_{L^2(0,1)}^2 = \sum_{k\in\Lambda} c_k^2$.
The last sum is distributed according to a $\chi^2$-distribution
with~$|\Lambda|$ degrees of freedom. As mentioned several times,
our interest in the norm ratios is their behavior as $\eps \to 0$,
and therefore we focus on the case that the number of degrees of
freedom is large. In this situation, the above random variable
becomes indistinguishable from a normal random variable with
mean value $\mu = |\Lambda|$ and variance $\sigma^2 = 2 |\Lambda|$.
But even more can be said. The $L^2(0,1)$-norm of the
function~$g^{(|\Lambda|)}$ follows a $\chi$-distribution with
\begin{displaymath}
  \mbox{mean value }\quad
  \mu = \sqrt{2} \cdot
    \frac{\Gamma\left( \frac{|\Lambda|+1}{2} \right)}
         {\Gamma\left( \frac{|\Lambda|}{2} \right)}
    \sim \sqrt{|\Lambda|}
  \quad\mbox{ and variance }\quad
  \sigma^2 = |\Lambda|-\mu^2 \; .
\end{displaymath}
Note that the mean value~$\mu$ is proportional to~$\sqrt{|\Lambda|}$
with proportionality constant~$1$, which can easily be deduced from
Stirling's formula. This in turn implies that the variance~$\sigma^2$
does in fact grow much slower than~$|\Lambda|$. In other words, as a
rule of thumb we can assume that the $L^2$-norm of~$g^{(|\Lambda|)}$
satisfies $\|g^{(|\Lambda|)}\|_{L^2(0,1)} \approx \sqrt{|\Lambda|}$
with little variation.

The above discussion finally allows us to estimate the norm ratio
for the function~$g^{(|\Lambda|)}$. We expect that in fact
\begin{equation} \label{eqn:leftep1}
  \frac{\left\| g^{(|\Lambda|)} \right\|_{L^\infty(0,1)}}
       {\left\| g^{(|\Lambda|)} \right\|_{L^2(0,1)}}
  \approx \frac{2 \sqrt{|\Lambda|}}{\sqrt{\pi}} \; .
\end{equation}
In the right-most column of Table~\ref{tab:modes} we have tabulated
the values of the fraction on the right-hand side. These are in
remarkable agreement with the numerically determined values shown
in Table~\ref{tab:propTub}, see also Table~\ref{tab:prop}.

What about values of the sign forcing parameter~$m$ which are 
different from~$|\Lambda|$ or~$0$? The property that the maximum
norm of~$g^{(m)}$ is attained at $x=0$ is somewhat stable, even
if not all signs~$s_k$ are equal to~$+1$, but only a majority of
them. This is due to the fact that the point $x=0$ is the only point
in the interval~$G$ where all peaks of the cosine basis functions
are at exactly the same spot, or in other words, are in phase. If we 
therefore consider $m \neq 0$, but still suppose that
$\|g^{(m)}\|_{L^\infty(0,1)} = |g^{(m)}(0)|$, then one can easily 
see that $\|g^{(m)}\|_{L^\infty(0,1)}$ is --- up to a factor~$\sqrt{2}$
which comes from the normalization of the~$e_k$ --- still half-normally
distributed with expected value $\mu = (|\Lambda| - 2 m) \cdot
\sqrt{2/\pi}$, and this implies that
\begin{displaymath}
  \mathbb{E}\left\| g^{(m)} \right\|_{L^\infty(0,1)} =
  \left( |\Lambda| - 2 m \right) \cdot
    \frac{2}{\sqrt{\pi}} \; .
\end{displaymath}
Heuristically one would therefore expect that
\begin{equation} \label{eqn:leftep2}
  \frac{\left\| g^{(m)} \right\|_{L^\infty(0,1)}}
       {\left\| g^{(m)} \right\|_{L^2(0,1)}}
  \approx \frac{2 \sqrt{|\Lambda|}}{\sqrt{\pi}} - 
    \frac{4m}{\sqrt{\pi |\Lambda|}} \; .
\end{equation}
This heuristic formula does indeed describe the linear
decay which was observed in Figure~\ref{fig:varc}, particularly
in the top left image. Since this image only considers a small
interval to the right of $x = 0$, we have performed essentially
a Monte Carlo simulation to determine the expected value of the
random variable~$g^{(m)}(0)$. We expect its expected value to
decay linearly to~$0$ as~$m$ increases from~$0$ to~$\Lambda/2$,
and then increase again in a symmetric way. Note, however, that 
as we increase the size~$c$ of the interval~$[0,c]$, the point
$x = 0$ loses its importance as maximum norm defining argument
for $m \approx |\Lambda| / 2$. In this window, oscillations on
the rest of the interval, which have the similar magnitude for
all sign distributions, yield the norm of the function.
The above-described effect is even more pronounced in the
context of Figure~\ref{fig:wanne}, where we choose $c = 1$. 

To summarize, we have shown in this section that the height and
the linear behavior of the tubs shown in Figures~\ref{fig:varc}
and~\ref{fig:wanne} can be described accurately by the
formulas~(\ref{eqn:leftep1}) and~(\ref{eqn:leftep2}),
respectively. What still has to be explained is the horizontal
behavior along the bottom of the tub, and this will be the
subject of subsequent sections.
\subsection{The Behavior at the Remaining Points}
While the special role of the left endpoint $x = 0$ can easily
be explained using the fact that all cosine basis functions
are equal to their maximum at this point, it does make one 
wonder about the right end point $x = 1$. Also here, the basis
functions realize their maximum norm, but this time the actual
function value alternates between~$-1$ and~$1$. This implies that
the random function~$g^{(m)}$ follows the same probability law at
both the right endpoint $x = 1$ and at the left endpoint of the
interval~$G$. However, this fact does not materialize in our
previous simulations, since we considered events parametrized
by the sign forcing variable~$m$. In order for $x = 1$ to lead
to high extremal values we would need to require that the signs~$s_k$
alternate as well, in order to compensate for the alternating signs
of the basis functions.
\begin{figure}
  \centering
  \scalebox{0.5}{
  \setlength{\unitlength}{1pt}
  \begin{picture}(0,0)
  \includegraphics{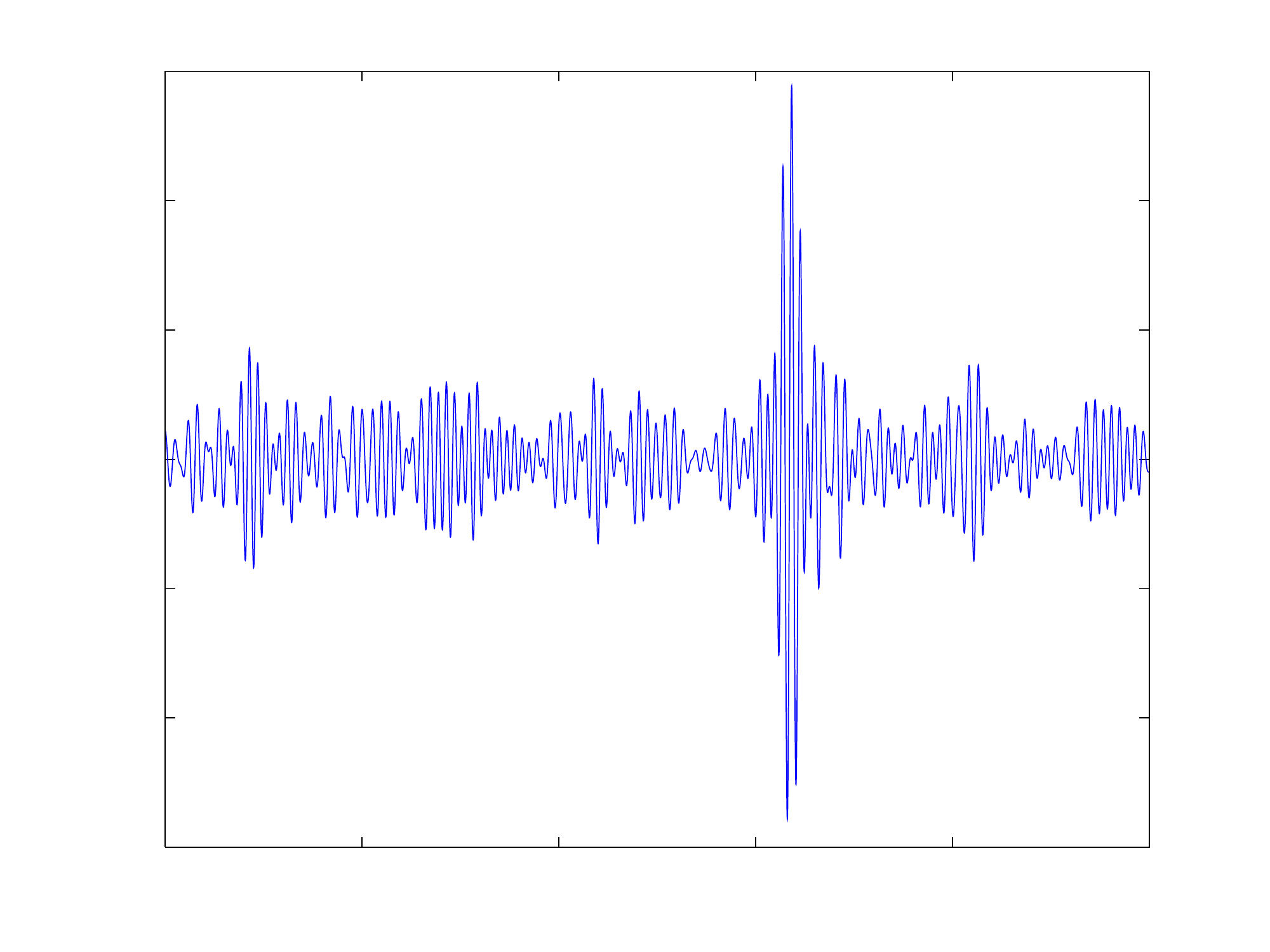}
  \end{picture}%
  \begin{picture}(576,432)(0,0)
  \fontsize{20}{0}
  \selectfont\put(74.88,42.5189){\makebox(0,0)[t]{\textcolor[rgb]{0,0,0}{{0}}}}
  \fontsize{20}{0}
  \selectfont\put(164.16,42.5189){\makebox(0,0)[t]{\textcolor[rgb]{0,0,0}{{0.2}}}}
  \fontsize{20}{0}
  \selectfont\put(253.44,42.5189){\makebox(0,0)[t]{\textcolor[rgb]{0,0,0}{{0.4}}}}
  \fontsize{20}{0}
  \selectfont\put(342.72,42.5189){\makebox(0,0)[t]{\textcolor[rgb]{0,0,0}{{0.6}}}}
  \fontsize{20}{0}
  \selectfont\put(432,42.5189){\makebox(0,0)[t]{\textcolor[rgb]{0,0,0}{{0.8}}}}
  \fontsize{20}{0}
  \selectfont\put(521.28,42.5189){\makebox(0,0)[t]{\textcolor[rgb]{0,0,0}{{1}}}}
  \fontsize{20}{0}
  \selectfont\put(69.8755,47.52){\makebox(0,0)[r]{\textcolor[rgb]{0,0,0}{{-60}}}}
  \fontsize{20}{0}
  \selectfont\put(69.8755,106.2){\makebox(0,0)[r]{\textcolor[rgb]{0,0,0}{{-40}}}}
  \fontsize{20}{0}
  \selectfont\put(69.8755,164.88){\makebox(0,0)[r]{\textcolor[rgb]{0,0,0}{{-20}}}}
  \fontsize{20}{0}
  \selectfont\put(69.8755,223.56){\makebox(0,0)[r]{\textcolor[rgb]{0,0,0}{{0}}}}
  \fontsize{20}{0}
  \selectfont\put(69.8755,282.24){\makebox(0,0)[r]{\textcolor[rgb]{0,0,0}{{20}}}}
  \fontsize{20}{0}
  \selectfont\put(69.8755,340.92){\makebox(0,0)[r]{\textcolor[rgb]{0,0,0}{{40}}}}
  \fontsize{20}{0}
  \selectfont\put(69.8755,399.6){\makebox(0,0)[r]{\textcolor[rgb]{0,0,0}{{60}}}}
  \end{picture}
  }
  \caption{By forcing the signs of the coefficients~$c_k$ in the
           random Fourier cosine sum~$f$ according to very specific 
           sign patterns, one can create worst-case behavior 
           norm ratios at any point $\hat x \in [0,1]$, not just at
           the left endpoint $x=0$. The above image demonstrates
           this for the particular choice $\hat x = 2/\pi
           \approx 0.6366$.}
  \label{fig:worstcase}
\end{figure}

The above reasoning can also be extended to arbitrary points
in the domain~$G = [0,1]$. For this, let $\hat{x} \in G$ be 
arbitrary. Our goal is to create random Fourier cosine sums~$f$
which exhibit large norm ratios~$\| f \|_{L^\infty(0,1)} /
\| f \|_{L^2(0,1)}$, but for which the maximum norm is attained
at~$\hat{x}$. For this, we need to imitate our considerations at
the left endpoint. They were based on the idea that all of the
signs of the values~$c_k e_k(x)$ had to be the same. While for
$x = 0$ this can be done explicitly, for $x = \hat{x}$ we need
to understand the signs of $e_k(\hat{x}) = \sqrt{2} \cos(k \pi
\hat{x})$. If we choose the sign of the coefficient~$c_k$ equal
to the sign of~$e_k(\hat{x})$, then the function values of~$f$ 
at~$x = \hat{x}$ accumulate to a large positive value. Notice 
that this value does not automatically have to be the maximum norm
of~$f$, since the values~$e_k(\hat{x})$ are usually not equal
to~$\pm 1$. However, At points~$x \neq \hat{x}$ one would expect
that the function values of~$e_k(x)$ and~$e_k(\hat{x})$ are 
generally out of sync, and therefore the computation of~$f(x)$
should lead to numerous cancellations. This in turn should make
it more difficult for~$f(x)$ to reach the size of the function
value~$f(\hat{x})$. More precisely, consider an arbitrary
point~$\hat{x} \in G$. Then one can show that as long as
$\cos(k \pi \hat x) \neq 0$ we have
\begin{displaymath}
  \cos(k \pi \hat x) > 0
  \quad\mbox{ if and only if }\quad
  \ell_k(\hat x) := \max\left\{ l \in \mathbb{Z} \; : \;
    \frac{2l+1}{2k} \leq \hat x \right\}
    \;\;\mbox{ is odd} \; .
\end{displaymath}
If we now define
\begin{displaymath}
  \gamma_k(\hat x) := (-1)^{1 + \ell_k(\hat x)} \; ,
\end{displaymath}
then, as long as $e_k(\hat x) \neq 0$, the statement
$\gamma_k(\hat x) = +1$ is equivalent to $e_k(\hat x) > 0$.
Finally, define~$\gamma_k(\hat x) = 1$ if $\cos(k \pi \hat x) = 0$,
and consider the random Fourier cosine sum
\begin{displaymath}
  g_{\hat x}^{(m)}(x) =
  \sum_{k \in \Lambda} s_k \cdot |c_k| \cdot \gamma_k(\hat x)
    \cdot \sqrt{2} \, \cos(k \pi x) \; ,
\end{displaymath}
where we assume that the number of signs~$s_k \in \{ \pm 1 \}$
which are equal to~$+1$ is given by~$m$. Then one can show that
the functions~$g_{\hat x}^{(0)}$ and~$g_{\hat x}^{(|\Lambda|)}$
do in fact exhibit exceedingly large norm ratios, see for example
Figure~\ref{fig:worstcase} for the case $\hat{x} = 2 / \pi$.
In this way, we can also create functions which exhibit worst-case
norm ratio behavior, but for which the maximum is attained in the
interior of the domain --- since the coefficient signs are in
sync with the signs of $\cos(k \pi \hat x)$.
%
%
%
%
\section{Modeling Extreme Values}
\label{sec:modelextreme}
%
%
%
%
\subsection{The Typical Oscillation Magnitude}
%
%
%
%
This final section is devoted to understanding the formation
of the plateaux or tub bottom in Figures~\ref{fig:varc}
and~\ref{fig:wanne}, i.e., we will try to understand what
determines the almost constant norm ratio in the regime where
formula~(\ref{eqn:leftep2}) no longer applies. This study
focuses on the local extreme values of random Fourier cosine
sums, their spatial distribution, and how precisely they are
generated through the random sum. Moreover, we will develop a
simplified model using binomial random variables to explain the
main features of this process.

We begin by considering the following natural question. How does
the function~$f$ develop its extreme values? Based on our discussion
of the last section, it seems reasonable to expect that the formation
of a local extreme point is likely near a point~$\hat{x}$ if many of
the function values~$e_k(\hat{x}) = \sqrt{2} \cos(k \pi \hat x)$ 
have the same sign as the corresponding normal coefficients~$c_k$.
This can be quantified in the following way.
\begin{definition}
We define the {\em match number\/} of a point $\hat x \in [0,1]$ as
\begin{displaymath}
  \match(\hat x) = \left| \left\{ k \; : \;
    \sgn(c_k \cdot \cos(k \pi \hat x)) = 1 \right\} \right| \; ,
\end{displaymath}
and the {\em match ratio\/} as its normalized equivalent
given by
\begin{displaymath}
  \overline{\match}(\hat x) = 
  \frac{\match(\hat x)}{|\Lambda|}
  \in [0,1] \; .
\end{displaymath}
\end{definition}
If a given point~$\hat{x}$ has a high match number, close to~$|\Lambda|$,
then many of the basis functions~$e_k$ are multiplied by a coefficient~$c_k$
of the same sign so that the sum attains a high value. If a point has a
low match number, i.e., close to~$0$, then most products~$c_k e_k(\hat{x})$
are negative, and thus~$f(\hat{x})$ becomes highly negative.
\begin{figure}
  \centering
  \scalebox{0.7}{
  \setlength{\unitlength}{1pt}
  \begin{picture}(0,0)
  \includegraphics{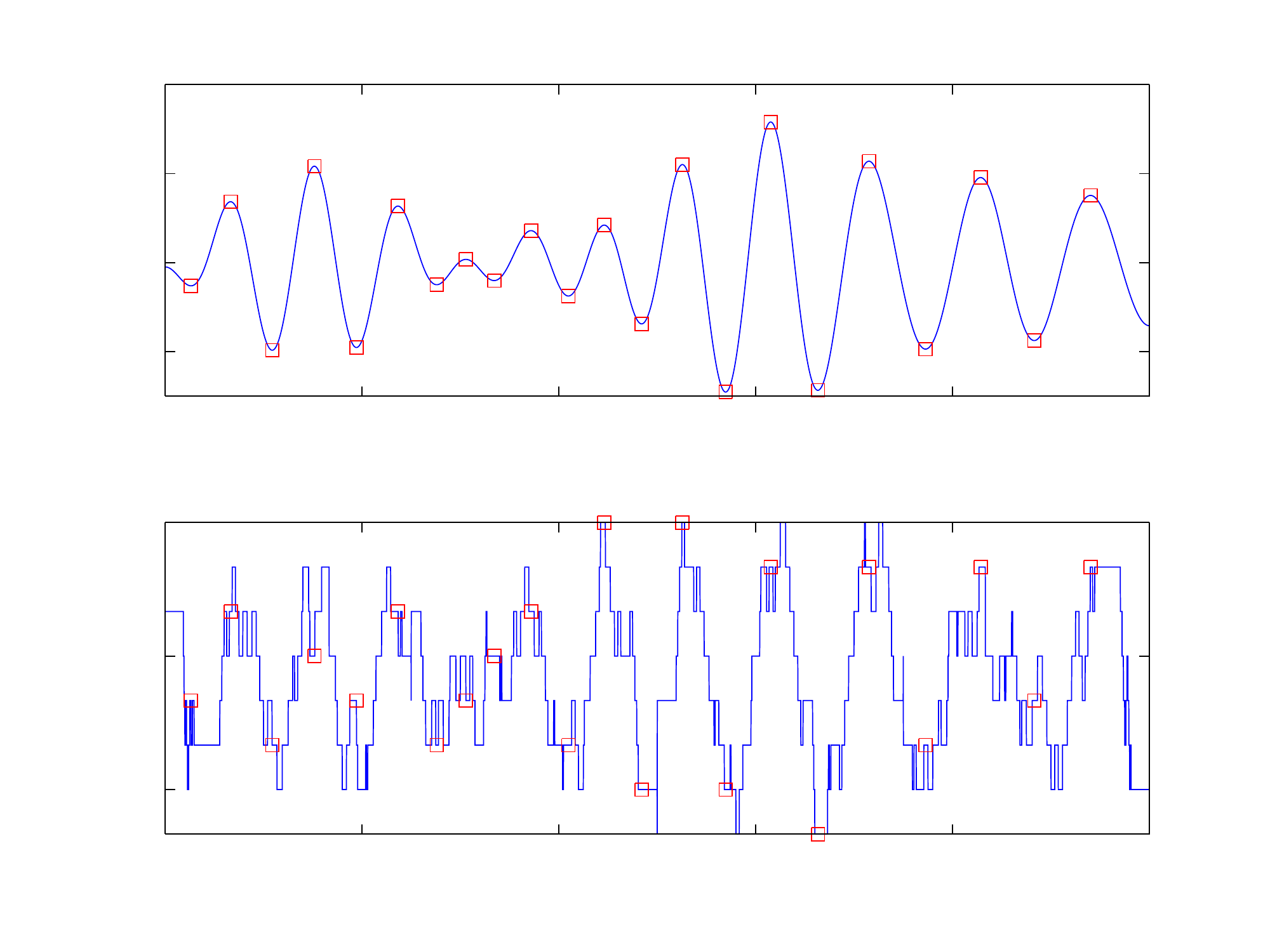}
  \end{picture}%
  \begin{picture}(576,432)(0,0)
  \fontsize{20}{0}
  \selectfont\put(74.88,247.203){\makebox(0,0)[t]{\textcolor[rgb]{0,0,0}{{0}}}}
  \fontsize{20}{0}
  \selectfont\put(164.16,247.203){\makebox(0,0)[t]{\textcolor[rgb]{0,0,0}{{0.2}}}}
  \fontsize{20}{0}
  \selectfont\put(253.44,247.203){\makebox(0,0)[t]{\textcolor[rgb]{0,0,0}{{0.4}}}}
  \fontsize{20}{0}
  \selectfont\put(342.72,247.203){\makebox(0,0)[t]{\textcolor[rgb]{0,0,0}{{0.6}}}}
  \fontsize{20}{0}
  \selectfont\put(432,247.203){\makebox(0,0)[t]{\textcolor[rgb]{0,0,0}{{0.8}}}}
  \fontsize{20}{0}
  \selectfont\put(521.28,247.203){\makebox(0,0)[t]{\textcolor[rgb]{0,0,0}{{1}}}}
  \fontsize{20}{0}
  \selectfont\put(69.8755,272.418){\makebox(0,0)[r]{\textcolor[rgb]{0,0,0}{{-4}}}}
  \fontsize{20}{0}
  \selectfont\put(69.8755,312.82){\makebox(0,0)[r]{\textcolor[rgb]{0,0,0}{{0}}}}
  \fontsize{20}{0}
  \selectfont\put(69.8755,353.221){\makebox(0,0)[r]{\textcolor[rgb]{0,0,0}{{4}}}}
  \fontsize{20}{0}
  \selectfont\put(69.8755,393.622){\makebox(0,0)[r]{\textcolor[rgb]{0,0,0}{{8}}}}
  \fontsize{20}{0}
  \selectfont\put(74.88,48.4833){\makebox(0,0)[t]{\textcolor[rgb]{0,0,0}{{0}}}}
  \fontsize{20}{0}
  \selectfont\put(164.16,48.4833){\makebox(0,0)[t]{\textcolor[rgb]{0,0,0}{{0.2}}}}
  \fontsize{20}{0}
  \selectfont\put(253.44,48.4833){\makebox(0,0)[t]{\textcolor[rgb]{0,0,0}{{0.4}}}}
  \fontsize{20}{0}
  \selectfont\put(342.72,48.4833){\makebox(0,0)[t]{\textcolor[rgb]{0,0,0}{{0.6}}}}
  \fontsize{20}{0}
  \selectfont\put(432,48.4833){\makebox(0,0)[t]{\textcolor[rgb]{0,0,0}{{0.8}}}}
  \fontsize{20}{0}
  \selectfont\put(521.28,48.4833){\makebox(0,0)[t]{\textcolor[rgb]{0,0,0}{{1}}}}
  \fontsize{20}{0}
  \selectfont\put(69.8755,73.6984){\makebox(0,0)[r]{\textcolor[rgb]{0,0,0}{{3}}}}
  \fontsize{20}{0}
  \selectfont\put(69.8755,134.3){\makebox(0,0)[r]{\textcolor[rgb]{0,0,0}{{6}}}}
  \fontsize{20}{0}
  \selectfont\put(69.8755,194.902){\makebox(0,0)[r]{\textcolor[rgb]{0,0,0}{{9}}}}
  \end{picture}
  }
  \caption{Relation between the match ratios and local extreme points.
           The first image shows a typical random Fourier cosine sum~$f$
           for $\eps=10^{-2}$ with highlighted extremal values. The second
           image contains the match rations for every point in the domain
           $G = [0,1]$. One can detect a high correlation between properties
           of ``being a local extremum'' and ``having an extreme match ratio.''
           For the shown images, the correlation factor is roughly~$0.87$.}
  \label{fig:corr}
\end{figure}
\begin{figure}
  \centering
  \includegraphics[width=0.6\textwidth]{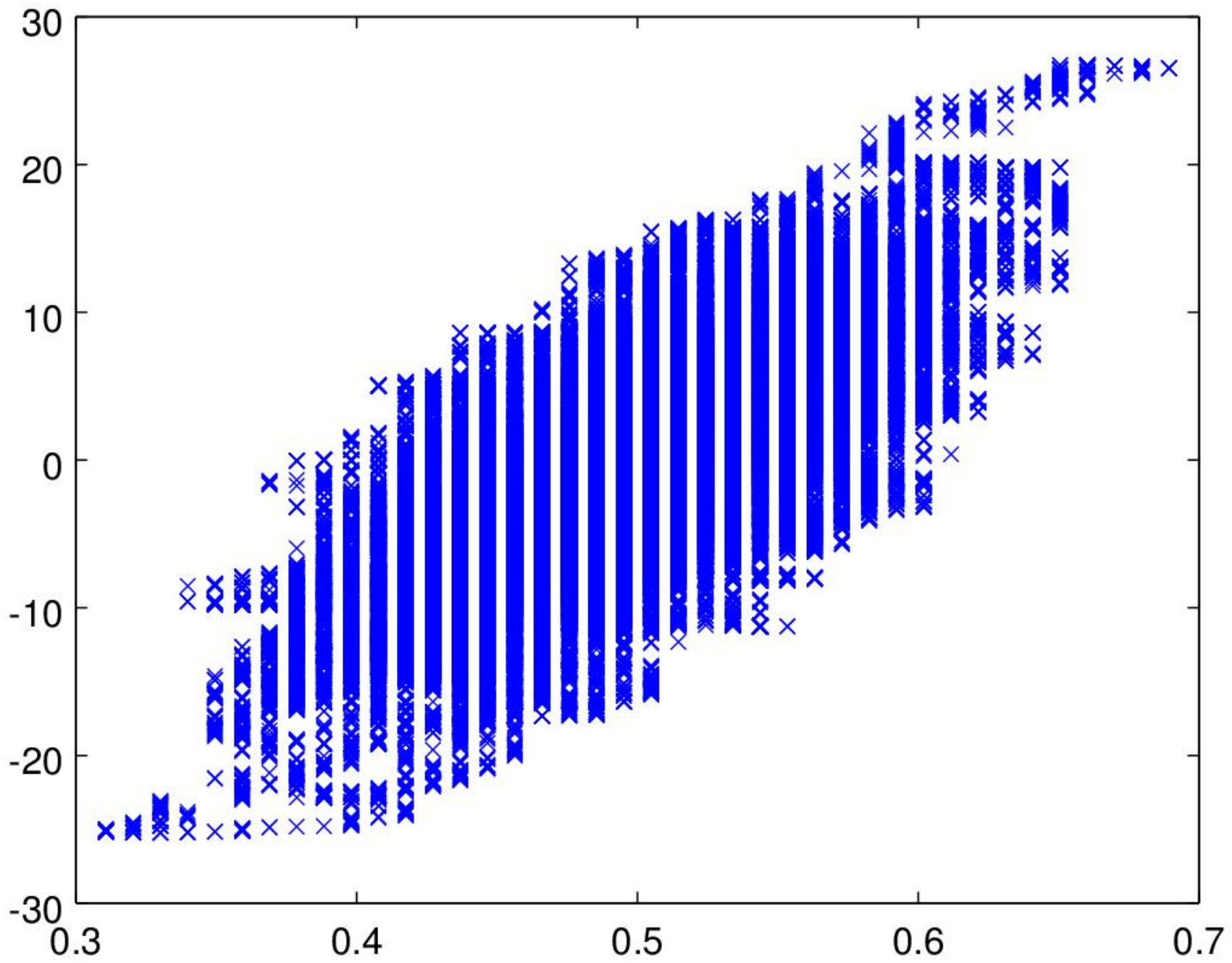}

    \scalebox{0.35}{
    \setlength{\unitlength}{1pt}
    \begin{picture}(0,0)
    \includegraphics{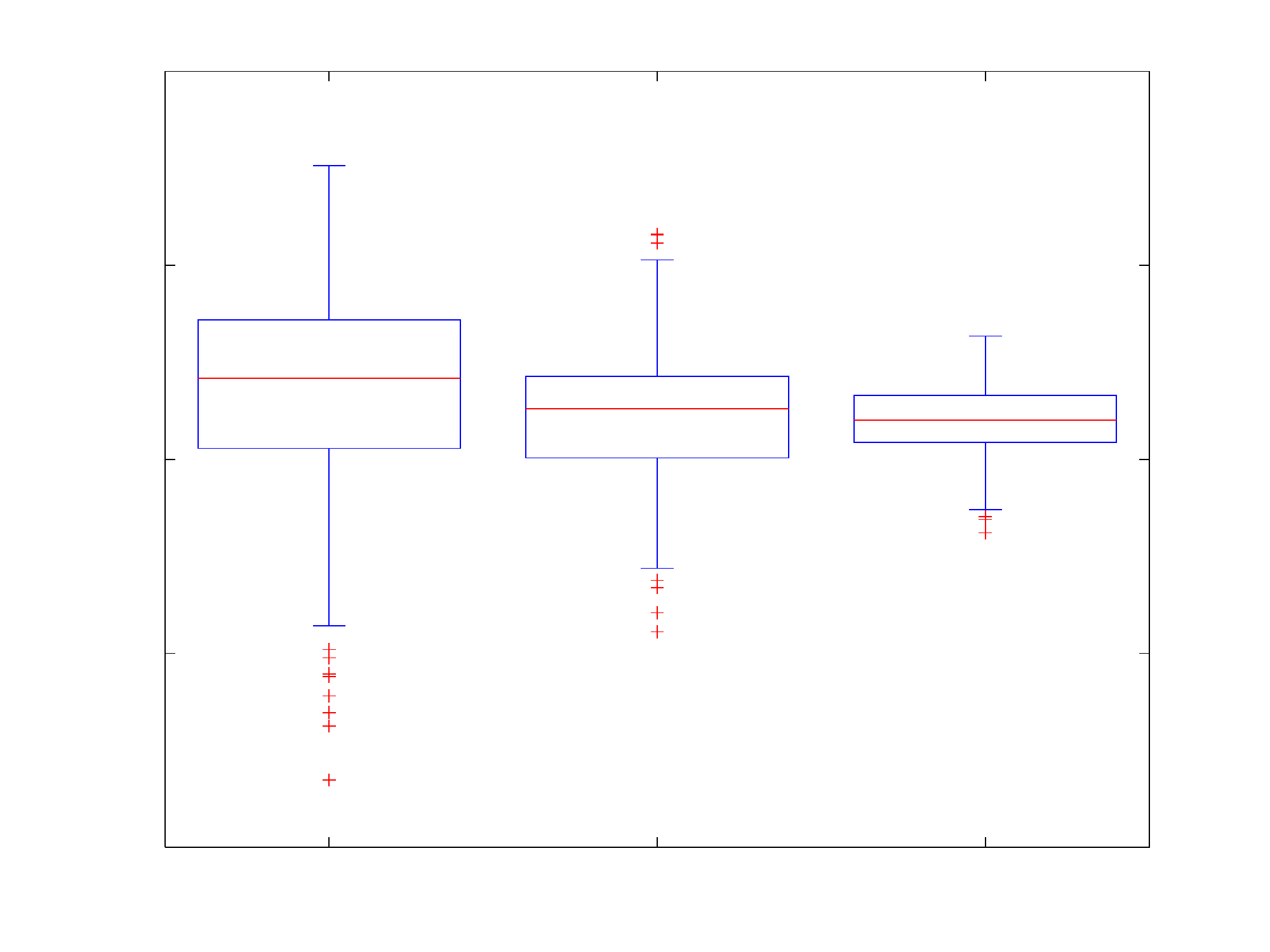}
    \end{picture}%
    \begin{picture}(576,432)(0,0)
    \fontsize{30}{0}
    \selectfont\put(149.28,42.5189){\makebox(0,0)[t]{\textcolor[rgb]{0,0,0}{{1}}}}
    \fontsize{30}{0}
    \selectfont\put(298.08,42.5189){\makebox(0,0)[t]{\textcolor[rgb]{0,0,0}{{2}}}}
    \fontsize{30}{0}
    \selectfont\put(446.88,42.5189){\makebox(0,0)[t]{\textcolor[rgb]{0,0,0}{{3}}}}
    \fontsize{30}{0}
    \selectfont\put(69.8755,135.54){\makebox(0,0)[r]{\textcolor[rgb]{0,0,0}{{0.6}}}}
    \fontsize{30}{0}
    \selectfont\put(69.8755,223.56){\makebox(0,0)[r]{\textcolor[rgb]{0,0,0}{{0.7}}}}
    \fontsize{30}{0}
    \selectfont\put(69.8755,311.58){\makebox(0,0)[r]{\textcolor[rgb]{0,0,0}{{0.8}}}}
    \fontsize{30}{0}
    \selectfont\put(69.8755,399.6){\makebox(0,0)[r]{\textcolor[rgb]{0,0,0}{{0.9}}}}
    \end{picture}
    }
      \scalebox{0.35}{
      \setlength{\unitlength}{1pt}
      \begin{picture}(0,0)
      \includegraphics{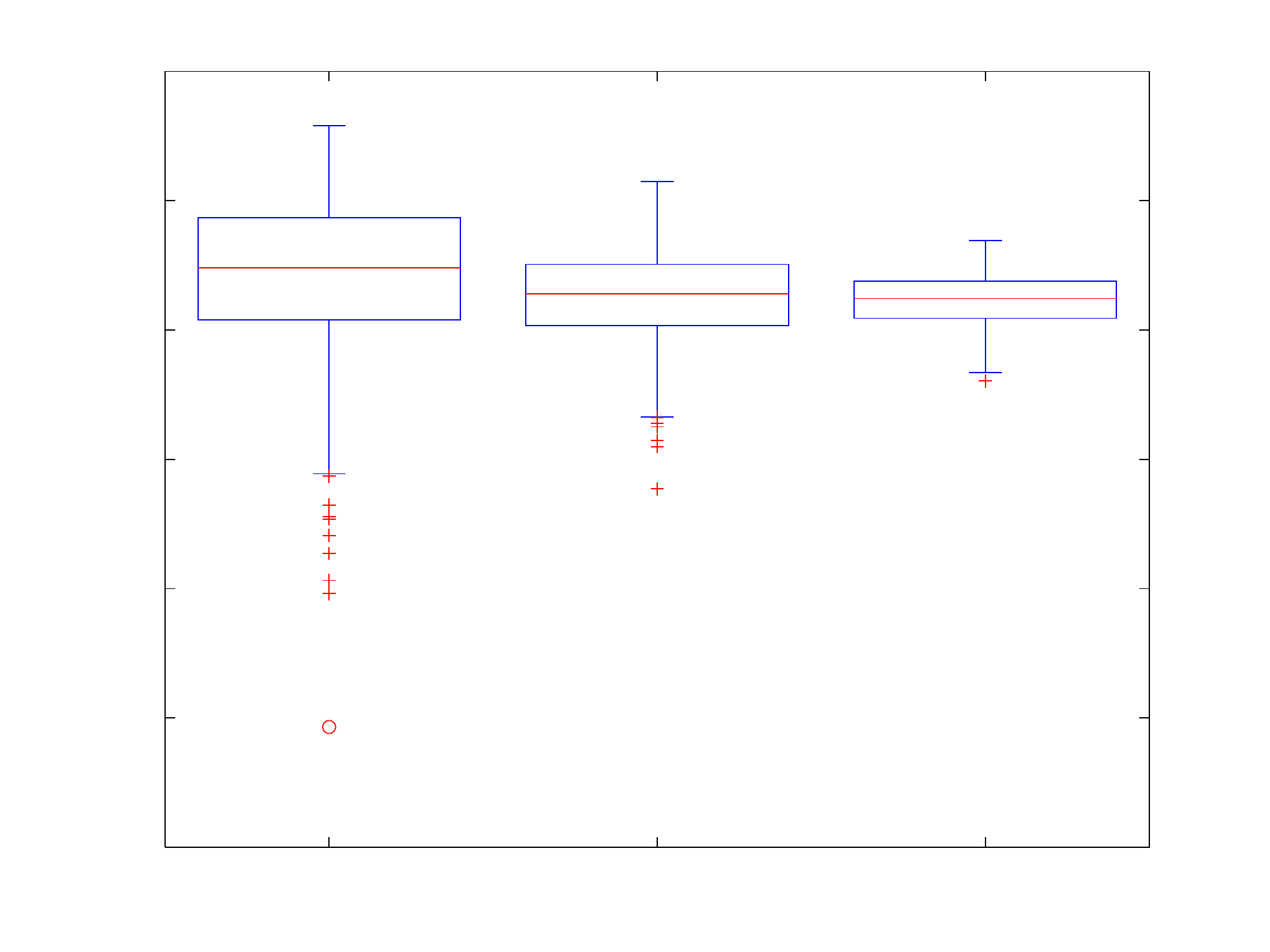}
      \end{picture}%
      \begin{picture}(576,432)(0,0)
      \fontsize{30}{0}
      \selectfont\put(149.28,42.5189){\makebox(0,0)[t]{\textcolor[rgb]{0,0,0}{{1}}}}
      \fontsize{30}{0}
      \selectfont\put(298.08,42.5189){\makebox(0,0)[t]{\textcolor[rgb]{0,0,0}{{2}}}}
      \fontsize{30}{0}
      \selectfont\put(446.88,42.5189){\makebox(0,0)[t]{\textcolor[rgb]{0,0,0}{{3}}}}
      \fontsize{30}{0}
      \selectfont\put(69.8755,106.2){\makebox(0,0)[r]{\textcolor[rgb]{0,0,0}{{0.5}}}}
      \fontsize{30}{0}
      \selectfont\put(69.8755,164.88){\makebox(0,0)[r]{\textcolor[rgb]{0,0,0}{{0.6}}}}
      \fontsize{30}{0}
      \selectfont\put(69.8755,223.56){\makebox(0,0)[r]{\textcolor[rgb]{0,0,0}{{0.7}}}}
      \fontsize{30}{0}
      \selectfont\put(69.8755,282.24){\makebox(0,0)[r]{\textcolor[rgb]{0,0,0}{{0.8}}}}
      \fontsize{30}{0}
      \selectfont\put(69.8755,340.92){\makebox(0,0)[r]{\textcolor[rgb]{0,0,0}{{0.9}}}}
      \fontsize{30}{0}
      \selectfont\put(69.8755,399.6){\makebox(0,0)[r]{\textcolor[rgb]{0,0,0}{{1}}}}
      \end{picture}
            }
  \caption{Understanding the relation between the match number
           and local extrema. The top image shows a scatter plot
           of the match ratio (horizontal axis) and the function
           value (vertical axis) for a fine grid of $x$-values in
           a numerically simulated Fourier cosine sum~$f$
           with~$\eps=10^{-3.5}$. The image indicates a high
           correlation between both quantities. The images on the
           bottom row depict boxplots of the correlation coefficients
           of~$500$ simulations analogous to the top panel, each
           time for $\eps=10^{-2}$, $\eps=10^{-2.5}$, and
           $\eps=10^{-3}$. The left picture depicts correlation of function value and match number in any point, the right picture shows the correlation only in extremal values.}
  \label{fig:corr2}
\end{figure}

The suspected relation between match ratio and local extreme values
can be observed in numerical simulations, see Figures~\ref{fig:corr}
and~\ref{fig:corr2}. These computations have consistently resulted
in average correlation factors over~$0.8$ between local extreme values
and the match ratio.

We now turn our attention to the spatial distribution of local extrema.
Their number is bounded above by the highest wave number of the involved
cosine basis functions~$e_k$, and bounded below by the lowest such wave
number. In order to see this, note that the critical points of~$f$ are
the zeros of its derivative, which is a Fourier sine function. The statement
then follows from a classical result~\cite[Theorem~6.2, p.~35]{karlin:68a}.
This leads to the following observation.
\begin{observation}
Empirical simulations show that local extreme values are
almost equidistantly distributed over the interval $G = [0,1]$,
with characteristic distance proportional to~$1/\bar{k}$, where
$\bar{k} \approx (\km+\kp) / 2$ is the average mode wave number,
see also~(\ref{eq:deflambda:kpm}).
\end{observation}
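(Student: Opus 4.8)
The statement combines a count of the local extrema, which can be made rigorous, with the stronger assertion that the extrema are \emph{almost equidistant} with spacing proportional to $1/\bar k$; for the latter I would combine a Kac--Rice density computation with the numerical evidence of Figure~\ref{fig:corr}. The plan for the count is as follows. Since the $c_k$ are almost surely all nonzero, $f'(x) = -\pi\sqrt2\sum_{k\in\Lambda} k\,c_k\sin(k\pi x)$ is a.s.\ a nontrivial sine sum with frequencies in $\Lambda$. Writing $\sin(k\pi x) = \sin(\pi x)\,U_{k-1}(\cos(\pi x))$ with $U_j$ the Chebyshev polynomials of the second kind gives $f'(x) = -\pi\sqrt2\,\sin(\pi x)\,Q(\cos(\pi x))$ for a real polynomial $Q$ of degree $\kp-1$ (a.s., since the leading coefficient is a nonzero multiple of $c_{\kp}$); as $\cos(\pi\cdot)$ maps $(0,1)$ bijectively onto $(-1,1)$ and $\sin(\pi x)>0$ there, $f'$ has at most $\kp-1$ zeros in $(0,1)$. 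For a matching lower bound I would use that $f$ has at least $\km$ zeros in $(0,1)$ --- already established in this paper via the cited result of Karlin --- so that Rolle's theorem forces at least $\km-1$ interior critical points. Adding the endpoints $x=0$ and $x=1$, which are critical points of $f$ for \emph{every} realization because all $\sin(k\pi x)$ vanish there, the number $Z$ of local extrema satisfies $\km+1\le Z\le\kp+1$. By~(\ref{eq:deflambda:kpm}) one has $\km\sim\alpham/\eps$ and $\kp\sim\alphap/\eps$ with a fixed ratio, so $Z$ is of the order of $\bar k=(\km+\kp)/2$ and the mean gap between consecutive extrema is of the order of $1/\bar k$.

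To pass from the average spacing to a genuine equidistribution statement I would localise in a fixed interior window $[\delta,1-\delta]$. Using the product-to-sum identity together with the elementary bound $\big|\sum_{k\in\Lambda}\cos(k\pi(x+y))\big|\le 1/\sin(\pi\delta)$ for $x,y\in[\delta,1-\delta]$, one gets $\mathbb{E} f(x)^2=|\Lambda|+O_\delta(1)$ and $\mathbb{E} f(x)f(y)=\sum_{k\in\Lambda}\cos(k\pi(x-y))+O_\delta(1)$; hence $f/\sqrt{|\Lambda|}$ is asymptotically stationary in the bulk, and on the natural scale $1/\bar k$ its covariance converges to that of a fixed stationary Gaussian process $X$ whose spectral measure is a rescaling of the uniform measure on $[\km\pi,\kp\pi]$, i.e.\ a compact band bounded away from the origin. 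For such an $X$ the Kac--Rice formula gives the expected number of critical points per unit length as an explicit constant $\kappa$, and --- since the spectral measure is compactly supported and absolutely continuous --- a central limit theorem for critical-point counts of band-limited stationary Gaussian processes yields concentration of the count in a window $[a,a+L]$ around $\kappa L$ with fluctuations of order $\sqrt L$. Undoing the scaling, the number of local extrema of $f$ in an arbitrary bulk subinterval of length $L/\bar k$ equals $\kappa L(1+o(1))$ with probability tending to one as $\eps\to0$, uniformly over such subintervals; the $O(1)$ boundary layers near $x=0$ and $x=1$ perturb only an $O(1)$ number of extrema and do not affect this picture. This is the precise sense in which the extrema are ``almost equidistantly distributed with characteristic distance proportional to $1/\bar k$.''

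The main obstacle is upgrading ``the correct number of extrema in every mesoscopic window'' to the strong reading that \emph{every individual gap} is close to $1/\bar k$, which is a rigidity statement: two consecutive critical points of a Gaussian field can occasionally be anomalously close, corresponding to a near-double zero of $f'$, and excluding this with overwhelming probability amounts to a small-ball estimate for the pair $(f'(x),\bar k^{-1}f''(x))$ showing that all critical points are simple and uniformly well separated. I expect this anti-concentration step --- not the Kac--Rice density, nor the Karlin-type count, both of which are routine --- to be the genuine technical difficulty, which is why I would, as the paper does, state the strong ``almost equidistant'' conclusion as an empirical observation validated by the simulations, and prove rigorously only the bound $\km+1\le Z\le\kp+1$ together with the mesoscopic density $\kappa$.
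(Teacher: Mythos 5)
Your proposal is sound, and its rigorous core coincides with what the paper actually does: the paper supports this Observation only by noting that the critical points of~$f$ are the zeros of the Fourier sine sum~$f'$, invoking Karlin's theorem to place their number between~$\km$ and~$\kp$, and then arguing heuristically that the fast oscillation of the basis functions prevents two extrema from being close, leaving the genuine equidistribution claim to the Monte Carlo simulations. Your count $\km+1\le Z\le\kp+1$ reaches the same conclusion by a slightly different packaging --- the Chebyshev factorization $f'(x)=-\pi\sqrt2\,\sin(\pi x)\,Q(\cos(\pi x))$ for the upper bound and Rolle applied to the zeros of~$f$ for the lower bound, rather than applying Karlin's theorem directly to the sine sum~$f'$ --- but this is essentially the same elementary argument and gives the same order~$\bar k$ for the number of extrema and hence~$1/\bar k$ for the mean gap. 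Where you go genuinely beyond the paper is the second half: the localization to a bulk window, the identification of the scaling limit as a stationary band-limited Gaussian process, the Kac--Rice density~$\kappa$, and the CLT for critical-point counts. None of this appears in the paper, and it would upgrade the Observation from an empirical statement to a mesoscopic equidistribution theorem; your diagnosis that the remaining obstacle is an anti-concentration (gap-rigidity) estimate excluding near-double zeros of~$f'$, and that this is the reason the strong ``almost equidistant'' reading should stay empirical, is exactly right and is consistent with the authors' decision to state this as an Observation rather than a theorem.
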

The observation shows that the function~$f$ has only about~$\bar{k}$
opportunities to generate a large $L^\infty(0,1)$-norm value. Moreover,
in the direct vicinity of a local extremum there cannot be a second one,
since the involved cosine basis functions oscillate too quickly. This 
indicates that in general, another local extremum can only be observed
after a characteristic distance which is proportional to~$1 / \bar{k}$.

Suppose now that we have found a local extremum of~$f$ at
the point~$\hat x$. The signs of the cosine basis function
values~$e_k(\hat{x}) = \sqrt{2} \cos(k\pi\hat x)$ are distributed
over the modes in a more or less random fashion, which in fact is
chaotic in a discrete dynamical systems sense. Apart from points of
resonance, one would essentially expect equal numbers of positive
and negative signs. These basis functions are then multiplied by 
the random coefficients~$c_k$, which are standard normally distributed.
From our previous discussion one can infer that the magnitude of the
function value of~$f$ is larger if the fit between the signs of the
cosine basis functions and the signs of the coefficients $c_k$
is large as well, i.e., if the match number is either large or
small. In addition to this requirement for the match number, the
size of the function value of~$f$ is also affected by the magnitude
of the random coefficients~$c_k$.
\begin{figure}
  \centering
  \scalebox{0.35}{
  \setlength{\unitlength}{1pt}
  \begin{picture}(0,0)
  \includegraphics{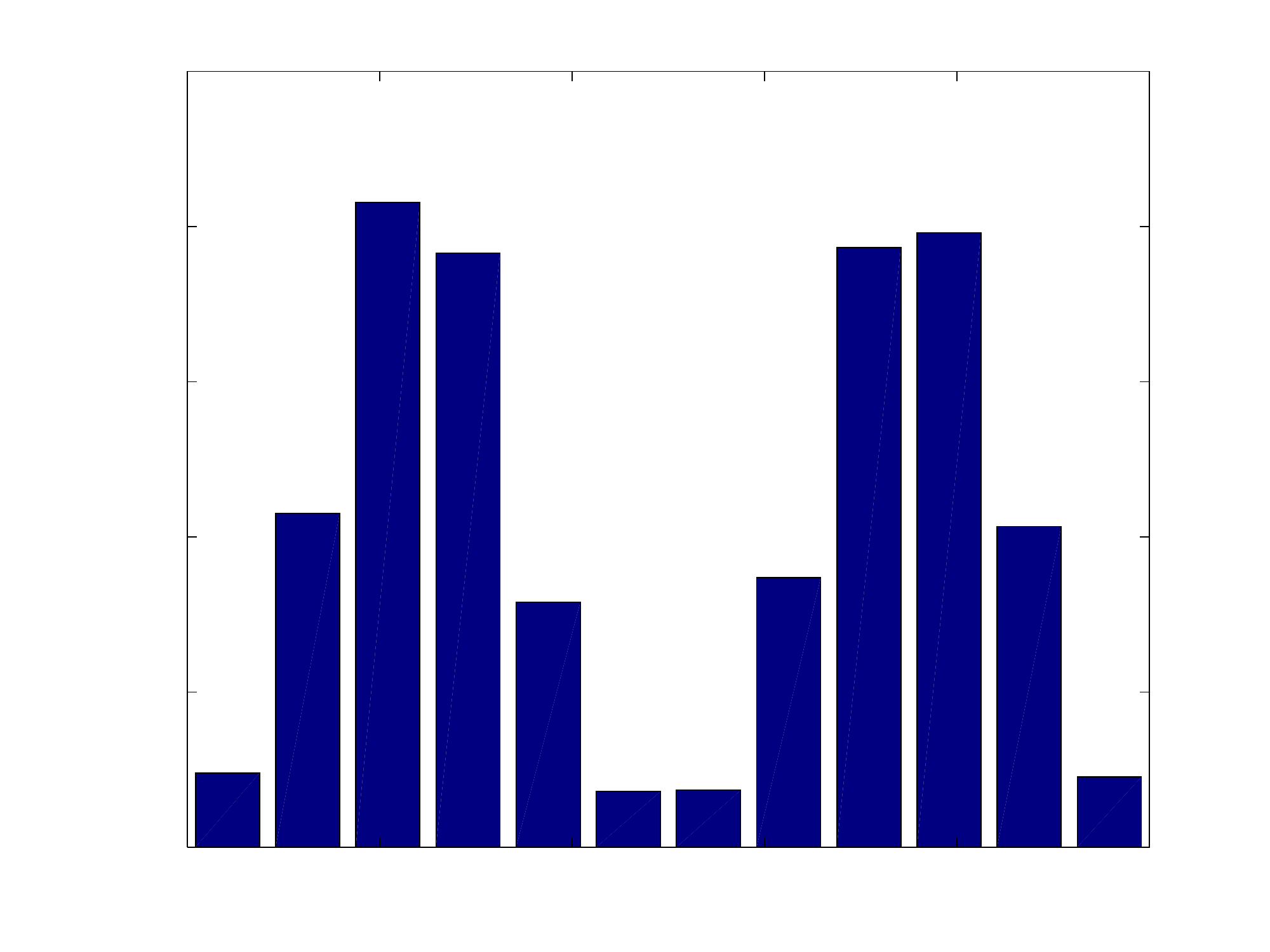}
  \end{picture}%
  \begin{picture}(576,432)(0,0)
  \fontsize{30}{0}
  \selectfont\put(85.0035,42.5189){\makebox(0,0)[t]{\textcolor[rgb]{0,0,0}{{0}}}}
  \fontsize{30}{0}
  \selectfont\put(172.259,42.5189){\makebox(0,0)[t]{\textcolor[rgb]{0,0,0}{{0.2}}}}
  \fontsize{30}{0}
  \selectfont\put(259.514,42.5189){\makebox(0,0)[t]{\textcolor[rgb]{0,0,0}{{0.4}}}}
  \fontsize{30}{0}
  \selectfont\put(346.769,42.5189){\makebox(0,0)[t]{\textcolor[rgb]{0,0,0}{{0.6}}}}
  \fontsize{30}{0}
  \selectfont\put(434.025,42.5189){\makebox(0,0)[t]{\textcolor[rgb]{0,0,0}{{0.8}}}}
  \fontsize{30}{0}
  \selectfont\put(521.28,42.5189){\makebox(0,0)[t]{\textcolor[rgb]{0,0,0}{{1}}}}
  \fontsize{30}{0}
  \selectfont\put(80.0003,47.52){\makebox(0,0)[r]{\textcolor[rgb]{0,0,0}{{0}}}}
  \fontsize{30}{0}
  \selectfont\put(80.0003,117.936){\makebox(0,0)[r]{\textcolor[rgb]{0,0,0}{{1000}}}}
  \fontsize{30}{0}
  \selectfont\put(80.0003,188.352){\makebox(0,0)[r]{\textcolor[rgb]{0,0,0}{{2000}}}}
  \fontsize{30}{0}
  \selectfont\put(80.0003,258.768){\makebox(0,0)[r]{\textcolor[rgb]{0,0,0}{{3000}}}}
  \fontsize{30}{0}
  \selectfont\put(80.0003,329.184){\makebox(0,0)[r]{\textcolor[rgb]{0,0,0}{{4000}}}}
  \fontsize{30}{0}
  \selectfont\put(80.0003,399.6){\makebox(0,0)[r]{\textcolor[rgb]{0,0,0}{{5000}}}}
  \end{picture}
  }
  \hspace{0.3cm}
    \scalebox{0.35}{
    \setlength{\unitlength}{1pt}
    \begin{picture}(0,0)
    \includegraphics{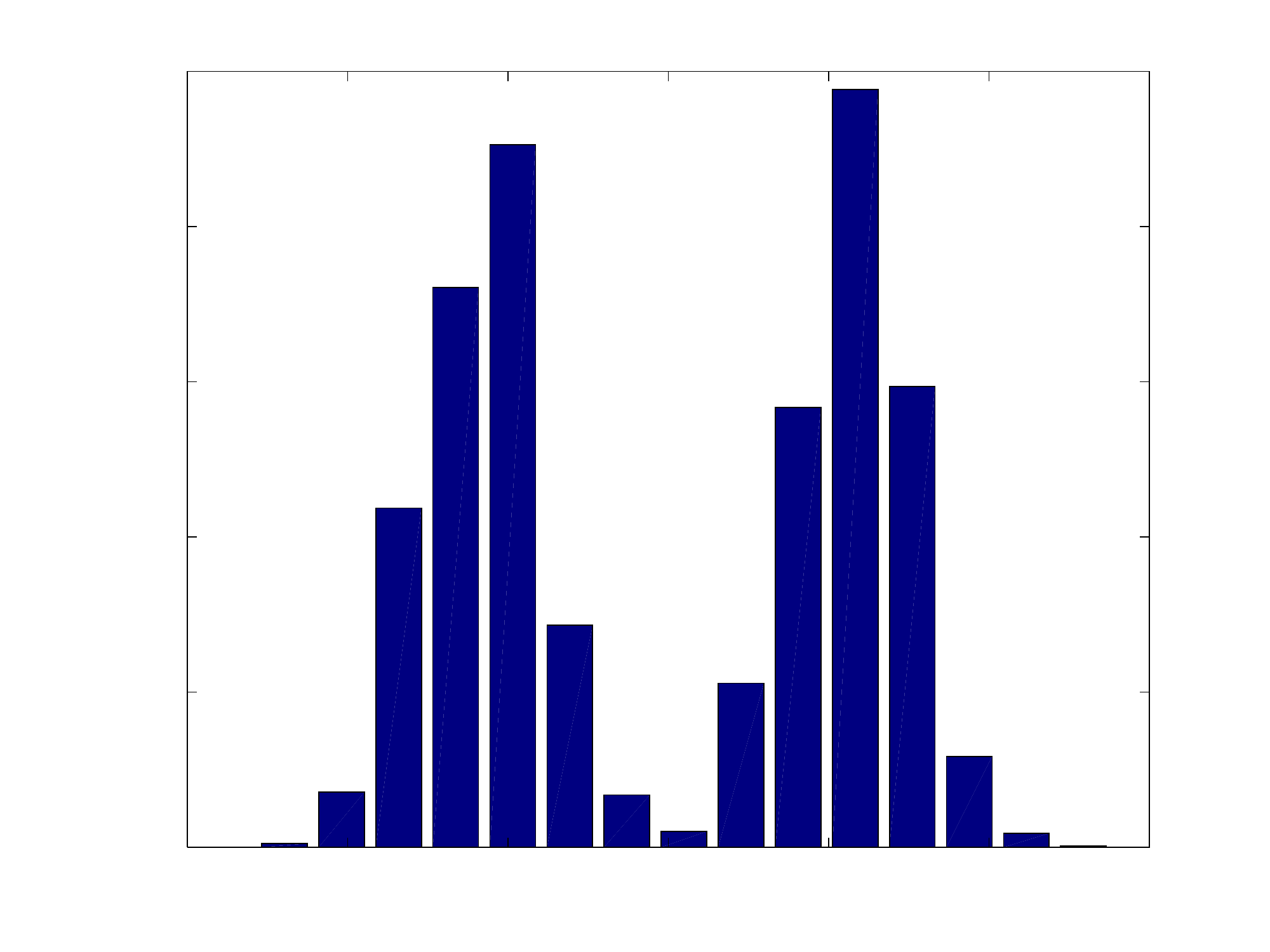}
    \end{picture}%
    \begin{picture}(576,432)(0,0)
    \fontsize{30}{0}
    \selectfont\put(85.0034,42.5189){\makebox(0,0)[t]{\textcolor[rgb]{0,0,0}{{0.2}}}}
    \fontsize{30}{0}
    \selectfont\put(157.716,42.5189){\makebox(0,0)[t]{\textcolor[rgb]{0,0,0}{{0.3}}}}
    \fontsize{30}{0}
    \selectfont\put(230.429,42.5189){\makebox(0,0)[t]{\textcolor[rgb]{0,0,0}{{0.4}}}}
    \fontsize{30}{0}
    \selectfont\put(303.142,42.5189){\makebox(0,0)[t]{\textcolor[rgb]{0,0,0}{{0.5}}}}
    \fontsize{30}{0}
    \selectfont\put(375.854,42.5189){\makebox(0,0)[t]{\textcolor[rgb]{0,0,0}{{0.6}}}}
    \fontsize{30}{0}
    \selectfont\put(448.567,42.5189){\makebox(0,0)[t]{\textcolor[rgb]{0,0,0}{{0.7}}}}
    \fontsize{30}{0}
    \selectfont\put(521.28,42.5189){\makebox(0,0)[t]{\textcolor[rgb]{0,0,0}{{0.8}}}}
    \fontsize{30}{0}
    \selectfont\put(80.0002,47.52){\makebox(0,0)[r]{\textcolor[rgb]{0,0,0}{{0}}}}
    \fontsize{30}{0}
    \selectfont\put(80.0002,117.936){\makebox(0,0)[r]{\textcolor[rgb]{0,0,0}{{1000}}}}
    \fontsize{30}{0}
    \selectfont\put(80.0002,188.352){\makebox(0,0)[r]{\textcolor[rgb]{0,0,0}{{2000}}}}
    \fontsize{30}{0}
    \selectfont\put(80.0002,258.768){\makebox(0,0)[r]{\textcolor[rgb]{0,0,0}{{3000}}}}
    \fontsize{30}{0}
    \selectfont\put(80.0002,329.184){\makebox(0,0)[r]{\textcolor[rgb]{0,0,0}{{4000}}}}
    \fontsize{30}{0}
    \selectfont\put(80.0002,399.6){\makebox(0,0)[r]{\textcolor[rgb]{0,0,0}{{5000}}}}
    \end{picture}
    }
  \caption{Histogram plot of the match ratio of the global extremum
           of the random Fourier cosine sum~$f$ from $N = 50000$
           Monte Carlo simulations. The left image is for $\eps=10^{-2}$
           while the right panel shows the case $\eps=10^{-3}$.}
  \label{fig:matchGlobExtr}
\end{figure}

Unfortunately, extremely large or small match numbers are rare
events, since we can assume that they follow a binomial distribution.
This means that match ratios equal to~$0$ or~$1$ occur with probability
$2^{-|\Lambda|}$, while the most likely event is a match ratio of~$1/2$,
which usually leads to a function value close to~$0$ due to cancellations.
Nevertheless, even slight variations from this match ratio value quickly
result in much higher function values.
\begin{observation}
Essentially, the size of the $L^\infty(0,1)$-norm is the result of a
trade-off between sharply declining binomial probabilities, which model
the likelihood of match numbers, and the creation of function values of
large size through match number ratios away from the center ratio~$1/2$.
This trade-off is visualized in Figure~\ref{fig:matchGlobExtr}, which
shows histograms of the match ratio of the global extremum of~$f$
from Monte Carlo simulations for $\eps=10^{-2}$ and $\eps=10^{-3}$.
\end{observation}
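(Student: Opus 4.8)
The plan is to promote this informal trade-off to a quantitative statement about the match ratio $\overline{\match}(\hat x^{*})$ at the location $\hat x^{*}$ of the global extremum of $|f|$, and to pin down the value around which this match ratio concentrates. The technical heart is the \emph{decoupling} of sign and magnitude alluded to in the abstract: for a fixed point $\hat x$ write $\sigma_k=\sgn\!\big(c_k\cos(k\pi\hat x)\big)$ and $a_k(\hat x)=|c_k|\,|e_k(\hat x)|\ge 0$, so that $f(\hat x)=\sum_{k\in\Lambda}\sigma_k\,a_k(\hat x)$. Since the signs $\sgn(c_k)$ are fair coins independent of everything else, the random set $\{k:\sigma_k=1\}$ has size $\match(\hat x)\sim\Bin(|\Lambda|,1/2)$, and conditionally on $\match(\hat x)=m$ it is a uniformly random $m$-subset, independent of the nonnegative magnitudes $a_k(\hat x)$. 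A one-line computation then yields
\[
  \E\big[f(\hat x)\,\big|\,\match(\hat x)=m\big]
  =(2m-|\Lambda|)\cdot\frac1{|\Lambda|}\sum_{k\in\Lambda}\E\,a_k(\hat x)
  \asymp|\Lambda|\big(2\,\overline{\match}(\hat x)-1\big),
\]
with conditional variance of order $|\Lambda|$; dividing by $\|f\|_{L^2}\asymp\sqrt{|\Lambda|}$ (as established in Section~\ref{sec:L2}), the contribution of a point with match ratio $\bar m$ to the norm ratio is $\asymp\sqrt{|\Lambda|}\,(2\bar m-1)$, modulo order-one fluctuations.

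I would then feed this into the spatial picture recorded above: $f$ has only $\bar k\asymp(\km+\kp)/2\asymp\eps^{-1}$ essentially separated candidate extrema, and — up to those order-one fluctuations — the global extremum sits at the candidate maximizing the sign imbalance $\bigl|2\match(\hat x)-|\Lambda|\bigr|$. At each single candidate the match number is $\Bin(|\Lambda|,1/2)$, so by Stirling the probability it equals $m$ is comparable to $|\Lambda|^{-1/2}\exp\!\big(-2|\Lambda|(\bar m-\tfrac12)^2\big)$; taking the most extreme among $\bar k$ such counts multiplies this by an increasing function of $|\bar m-\tfrac12|$ (of order $\bar k$ in the union-bound regime). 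The law of $\overline{\match}(\hat x^{*})$ is therefore governed by the product of a sharply decaying Gaussian factor and this ``number of chances'' factor, which is exactly the trade-off the Observation describes; it is maximized when $2|\Lambda|(\bar m-\tfrac12)^2\asymp\log\bar k$, i.e.
\[
  \overline{\match}(\hat x^{*})-\tfrac12\;\asymp\;\sqrt{\tfrac{\log\bar k}{|\Lambda|}}\;\asymp\;\sqrt{\eps\log\eps^{-1}}.
\]
Substituting back into $\sqrt{|\Lambda|}\,(2\bar m-1)$ gives a maximum norm ratio of order $\sqrt{\log\eps^{-1}}$, comfortably below the bound $C\log\eps^{-1}$ flagged in the introduction and consistent with Corollary~\ref{cor:brute} and Theorem~\ref{thm:main}; the histograms in Figure~\ref{fig:matchGlobExtr} are precisely the empirical image of this peaked product, and the argument also explains why their peaks drift toward $1/2$ as $\eps\to0$.

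The step I expect to be the main obstacle is the extreme-value analysis over the $\bar k$ candidates, because these candidates are \emph{not} independent: as $\hat x$ varies, the sign pattern $\big(\sgn(\cos(k\pi\hat x))\big)_{k\in\Lambda}$ evolves through the deterministic but chaotic rule encoded by the $\ell_k(\hat x)$ of Section~\ref{sec:forcing}, so the clean order-statistics heuristic must be replaced by a chaining/equidistribution estimate quantifying how fast these patterns decorrelate in $\hat x$ — in the spirit of Salem--Zygmund bounds for random trigonometric sums, but with the extra arithmetic rigidity of the sign patterns and with the magnitude field $\sum_{k\in\Lambda}a_k(\hat x)$ fluctuating on the very same scale $1/\bar k$. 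A secondary difficulty is to control the order-one conditional fluctuations uniformly in $\hat x$, so that ``largest sign imbalance'' and ``largest $|f|$'' coincide with high probability; near match ratio $1/2$ these fluctuations dominate the conditional mean, and one has to show that they do not shift the location of the peak.
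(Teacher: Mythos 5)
This statement is an empirical Observation, not a theorem: the paper offers no proof, only the Monte Carlo histograms of Figure~\ref{fig:matchGlobExtr} together with the surrounding heuristic that match numbers at a fixed point are $\Bin(|\Lambda|,1/2)$-distributed, so extreme match ratios are exponentially rare while match ratios near $1/2$ produce cancellation. Your proposal is a correct quantification of exactly that heuristic, and it follows essentially the same route the paper itself takes one subsection later: your conditional mean $(2m-|\Lambda|)\cdot\E a_k$ and conditional variance of order $|\Lambda|$ are precisely the $\mu=C_1(M-|\Lambda|/2)$ and $\sigma^2=C_2|\Lambda|$ of the simplified model in Definition~\ref{def:simplemodel}, and your extreme-value step over $\bar k\sim\eps^{-1}$ candidates is the computation behind Theorem~\ref{thm:main2}. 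What you add beyond the paper is the localization of the histogram peak at $\overline{\match}(\hat x^*)-\tfrac12\sim\sqrt{\eps\log\eps^{-1}}$ and the resulting $\sqrt{\log\eps^{-1}}$ growth rate, which is sharper than the $C\log\eps^{-1}$ the paper states (the paper's own model would also yield $\sqrt{\log\eps^{-1}}$, but it only proves the weaker bound).

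One caveat on a step you treat as essentially exact: you locate the global extremum at the candidate maximizing the sign imbalance $|2\match(\hat x)-|\Lambda||$, relegating the magnitude fluctuations to a ``secondary difficulty.'' In the paper's own variance accounting these two sources are comparable: $C_1^2/4=8/\pi^3\approx 0.26$ from the match number versus $C_2=1/2-8/\pi^3\approx 0.24$ from the imperfect alignment of the cosine peaks. So the record-holder among the $\bar k$ candidates is selected by the \emph{sum} of the two contributions, not by the sign imbalance alone; this pulls the histogram peak noticeably closer to $1/2$ than your pure-imbalance prediction (consistent with Figure~\ref{fig:matchGlobExtr}, where even for $\eps=10^{-3}$ the mass sits well inside $[0.2,0.8]$), although the order of magnitude $\sqrt{\eps\log\eps^{-1}}$ is unaffected. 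Your other self-identified obstacle --- the dependence between the $\bar k$ candidates --- is real but is simply assumed away in the paper (the $y_m$ are taken as independent copies, and the paper explicitly lists this as a shortcoming of the model), so it is not a gap relative to what the paper actually establishes.
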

%
%
%
%
\subsection{A Model for the Magnitude of Extreme Values}
%
%
%
%
We still have to understand what the characteristic magnitude of the
middle part of the plots in Figure~\ref{fig:wanne} is. For this, we 
will develop a model which captures the essential aspects of the generation
of extreme values. This will be based on our observations of the previous
sections.

As we have already seen, extrema are formed at approximately equidistant
points in the domain, with average distance roughly equal to $1 / \bar k$,
where we defined $\bar k = (\km+\kp) / 2$. In other words, in general the
random Fourier cosine sum~$f$ has about~$\bar k$ local minima and maxima.
Since both~$\km$ and~$\kp$ are of the same order in~$\eps$, the same can 
be said about their average~$\bar k$. In order to develop our model, we
now fix some $x \in [0,1]$ and define the two index sets
\begin{displaymath}
  \Lambda_1 = \left\{ k \in \Lambda \; : \;
    \sgn\left( c_k \cdot \cos(k \pi x) \right) = 1 \right\}
  \quad\mbox{ and }\quad 
  \Lambda_2 = \Lambda \setminus \Lambda_1 \; .
\end{displaymath}
This immediately implies the identity
\begin{displaymath}
  \sum_{k \in \Lambda} c_k \cdot \cos(k \pi x) =
  \sum_{k \in \Lambda_1} |c_k| \cdot |\cos(k \pi x)| -
    \sum_{k \in \Lambda_2} |c_k| \cdot |\cos(k \pi x)| \; ,
\end{displaymath}
where for the moment we the remove the normalization factor~$\sqrt{2}$
in the cosine basis functions for the sake of simplicity.
It follows from a standard result in discrete dynamical systems
that the function values~$\cos(k \pi x)$ are ergodic with respect
to iteration in~$k$ as long as the product~$\pi x$ is irrational.
We can therefore model each term $|\cos(k \pi x)|$ by a random
variable which is drawn according to the average magnitude of
sinusoidal functions on their positive regime, i.e., we can assume
that
\begin{displaymath}
  \left| \cos(k \pi x) \right| = \sin\left( \pi d_k \right) \; ,
\end{displaymath}
where the random variables~$d_k$ are independent and uniformly
distributed on the interval~$[0,1]$, to ensure the positivity 
of the function value. These random variables model the fact
that the peaks of the cosine functions do not coincide exactly, 
so we have to account for imperfect summation on positivity sets. 
Using the central limit theorem, one then obtains the approximation
\begin{displaymath}
  \sum_{k \in \Lambda_1} |c_k| \cdot \sin(\pi d_k) -
    \sum_{k \in \Lambda_2} |c_k| \cdot \sin(\pi d_k)
  \quad \simeq \quad
  \mathcal N \left(\mu, \sigma^2 \right) \; ,
\end{displaymath}
where
\begin{displaymath}
  \begin{array}{rcccl}
    \DS \mu & \DS = & \DS (|\Lambda_1| - |\Lambda_2|) \cdot
      \E\left( |c_k| \sin(\pi d_k) \right) & \DS = & \DS
      (|\Lambda_1| - |\Lambda_2|) \cdot
      \left( \frac{2}{\pi} \right)^{3/2} \quad\mbox{and} \\[2.5ex]
    \DS \sigma^2 & \DS = & \DS (|\Lambda_1| + |\Lambda_2|) \cdot
      \Var\left( |c_k| \sin(\pi d_k) \right) & \DS = & \DS
      |\Lambda| \cdot \left( \frac{1}{2} - \frac{8}{\pi^3}
      \right) \; .
  \end{array}
\end{displaymath}
We now need to model how the mode index sets~$\Lambda_1$ and~$\Lambda_2$
are chosen. For each $x \in [0,1]$ this will be accomplished according to
the match number $M \sim \operatorname{Bin}(|\Lambda|, 0.5)$. This yields
updated formulas for the mean~$\mu$ and variance~$\sigma^2$ in the form
\begin{displaymath}
  \mu = (2 M - |\Lambda|) \cdot
    \left(\frac{2}{\pi}\right)^{3/2}
  \quad\mbox{ and }\quad
  \sigma^2 = |\Lambda| \cdot
    \left( \frac{1}{2} - \frac{8}{\pi^3} \right) \; .
\end{displaymath}
Our rationale for modeling the match number~$M$ by a binomial
distribution can easily be explained. In every local extremum~$\hat x$,
the coefficient sequence~$(c_k)_{k \in \Lambda}$ yields a sign
distribution on the basis functions $\cos(k \pi \hat x)$. Since we
consider~$\hat x$ as being picked randomly in the interval~$[0,1]$,
the ergodic property of the system of mainly non-resonant cosine
functions gives another sequence of independent signs
$(\cos(k \pi \hat x))_{k \in \Lambda}$. Thus, their product is also
a mixture of signs, and we can assume that the number of positive signs
is binomially distributed with probability parameter~$1/2$.

We now take our simplified model one step further. Using the
Moivre-Laplace theorem one can approximate the random variable
$M - |\Lambda| / 2$ by a normal distribution with mean zero and
variance~$|\Lambda| / 4$. Hence, using the abbreviations
$C_1 = 2 (2/\pi)^{3/2}$ and $C_2 = 1/2 - 8/\pi^3$ we obtain
\begin{eqnarray*}
  y_m & \sim & \mathcal{N}\left( C_1 ( M - |\Lambda|/2 ) , \;
    C_2 |\Lambda| \right) \\[1ex]
  & \sim & \mathcal{N} \left( \mu =
    \mathcal{N}\left( 0 , \; \sigma^2 = C_1^2 |\Lambda| / 4 \right) , \;
    \sigma^2 = C_2 |\Lambda| \right)) \\[1ex]
  & = & |\Lambda|^{1/2} \cdot \mathcal{N}\left( \mu = 0, \;
    \sigma^2 = C_1^2 / 4 + C_2 \right) \\[1ex]
  & = & |\Lambda|^{1/2} \cdot \mathcal{N}\left(0 , \;
    \sigma^2 = 1/2 \right) \; ,
\end{eqnarray*}
where the second-to-last step is an elementary calculation. Note
also that $C_1^2 / 4 + C_2 = 1/2$ holds. The above derivations can
now be summarized and condensed into our simplified extreme value model.
\begin{definition}[Simplified Extreme Value Model]
\label{def:simplemodel}
In order to draw the values~$(y_m)$ of the extreme values of a
random Fourier cosine sum, we choose~$y_m$ as an independent
copy of
\begin{displaymath}
  y_m \sim \mathcal N \left( \mu = 0, \;
  \sigma^2 = |\Lambda| / 2 \right) \; ,
\end{displaymath}
and the match number approximation will be given by choosing~$M_m$
as an independent copy of
\begin{displaymath}
  M_m \sim \mathcal{N} \left( \mu = |\Lambda| / 2, \;
  \sigma^2 = 8 |\Lambda| / \pi^3 \right) \; .
\end{displaymath}
Moreover, in our model we treat the $L^2(0,1)$-norm of the random
Fourier cosine sum~$f$ as being fixed with value~$\sqrt{|\Lambda|}$,
see the discussion in Section~\ref{sec:L2}. All of these choices are
significant simplifications, but we will see later that they lead
to a model which captures the key features of the extreme values.
For abbreviation purposes, we define
\begin{displaymath}
  \left\| f^\text{\sc Model} \right\|_{L^\infty(0,1)} = \sqrt{2} \cdot
    \max_{m = 1,\ldots, \bar k} |y_m| = \sqrt{2} |y_{\hat m}|
  \quad\mbox{ and }\quad
  \left\| f^\text{\sc Model} \right\|_{L^2(0,1)} = \sqrt{|\Lambda|} \; .
\end{displaymath}
Notice that the extra factor~$\sqrt{2}$ accounts for the normalization
of the basis functions~$e_k$.
\end{definition}
Using numerical Monte Carlo simulations, we have tested the performance
of the above model for a variety of different values of~$\eps$. The results
can be found in Figures~\ref{fig:mag} through~\ref{fig:globmag}, and they
demonstrate that both the match number and the absolute magnitude of local
and global extrema are approximated fairly well. More precisely,
Figure~\ref{fig:mag} compares the actual distribution of local extreme
values with the ones generated by the simplified model in Definition~\ref{def:simplemodel},
while Figure~\ref{fig:match} does the same for the match numbers of local
extrema. Both of these figures consider the parameter~$\eps = 10^{-3.5}$.
The remaining Figures~\ref{fig:globmag_e-2} through~\ref{fig:globmag}
compare the actual global extreme values of the random Fourier cosine
sum~$f$ to the ones generated by the simplified model for $\eps = 10^{-2}$,
$\eps = 10^{-3}$, and $\eps = 10^{-4}$, respectively.
\begin{figure}
  \centering
  \scalebox{0.7}{
  \setlength{\unitlength}{1pt}
  \begin{picture}(0,0)
  \includegraphics{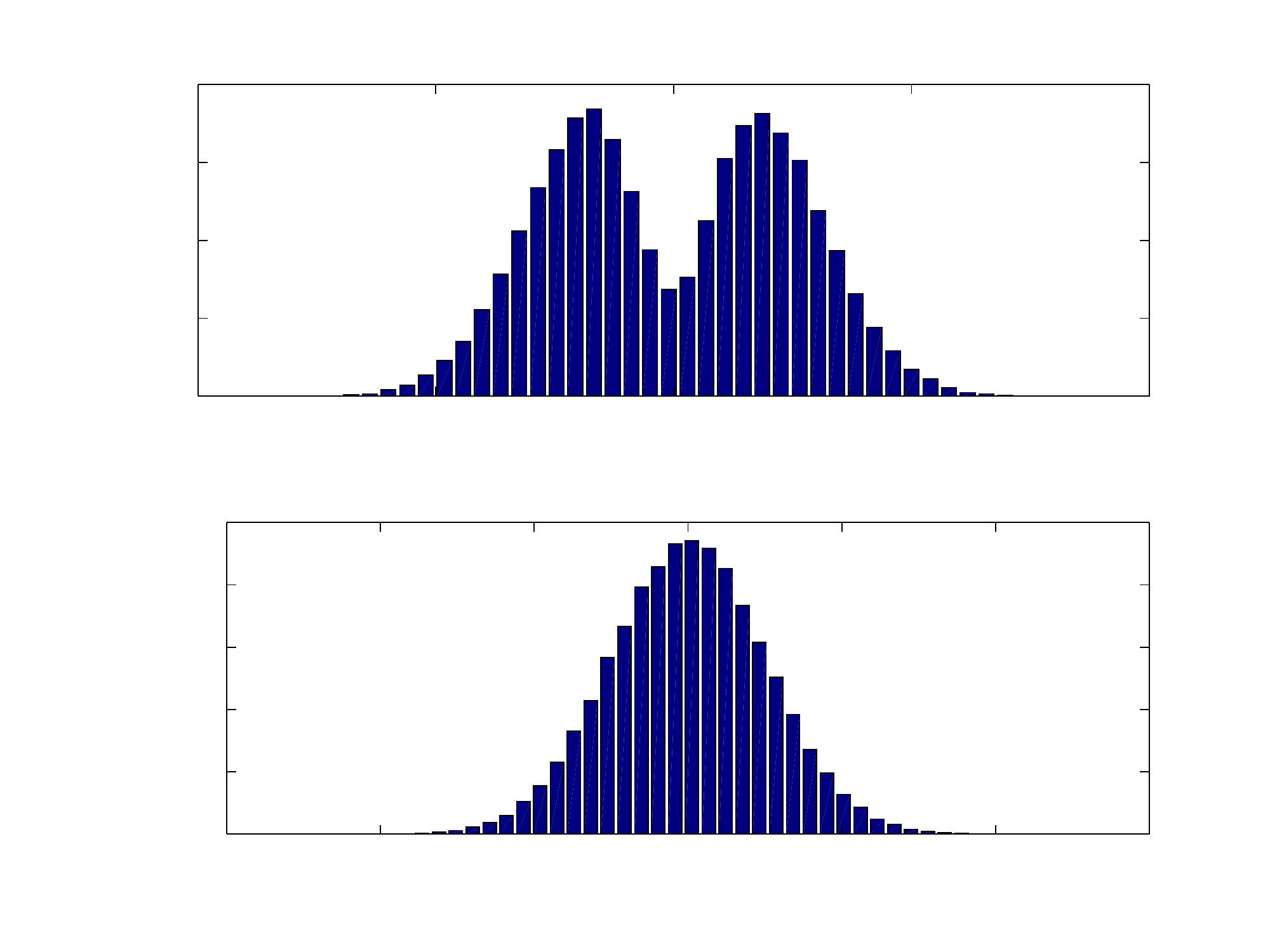}
  \end{picture}%
  \begin{picture}(576,432)(0,0)
  \fontsize{20}{0}
  \selectfont\put(89.8045,247.203){\makebox(0,0)[t]{\textcolor[rgb]{0,0,0}{{-40}}}}
  \fontsize{20}{0}
  \selectfont\put(197.673,247.203){\makebox(0,0)[t]{\textcolor[rgb]{0,0,0}{{-20}}}}
  \fontsize{20}{0}
  \selectfont\put(305.542,247.203){\makebox(0,0)[t]{\textcolor[rgb]{0,0,0}{{0}}}}
  \fontsize{20}{0}
  \selectfont\put(413.411,247.203){\makebox(0,0)[t]{\textcolor[rgb]{0,0,0}{{20}}}}
  \fontsize{20}{0}
  \selectfont\put(521.28,247.203){\makebox(0,0)[t]{\textcolor[rgb]{0,0,0}{{40}}}}
  \fontsize{20}{0}
  \selectfont\put(84.799,252.218){\makebox(0,0)[r]{\textcolor[rgb]{0,0,0}{{0}}}}
  \fontsize{20}{0}
  \selectfont\put(84.799,287.569){\makebox(0,0)[r]{\textcolor[rgb]{0,0,0}{{2000}}}}
  \fontsize{20}{0}
  \selectfont\put(84.799,322.92){\makebox(0,0)[r]{\textcolor[rgb]{0,0,0}{{4000}}}}
  \fontsize{20}{0}
  \selectfont\put(84.799,358.271){\makebox(0,0)[r]{\textcolor[rgb]{0,0,0}{{6000}}}}
  \fontsize{20}{0}
  \selectfont\put(84.799,393.622){\makebox(0,0)[r]{\textcolor[rgb]{0,0,0}{{8000}}}}
  \fontsize{20}{0}
  \selectfont\put(102.804,48.4833){\makebox(0,0)[t]{\textcolor[rgb]{0,0,0}{{-60}}}}
  \fontsize{20}{0}
  \selectfont\put(172.55,48.4833){\makebox(0,0)[t]{\textcolor[rgb]{0,0,0}{{-40}}}}
  \fontsize{20}{0}
  \selectfont\put(242.296,48.4833){\makebox(0,0)[t]{\textcolor[rgb]{0,0,0}{{-20}}}}
  \fontsize{20}{0}
  \selectfont\put(312.042,48.4833){\makebox(0,0)[t]{\textcolor[rgb]{0,0,0}{{0}}}}
  \fontsize{20}{0}
  \selectfont\put(381.788,48.4833){\makebox(0,0)[t]{\textcolor[rgb]{0,0,0}{{20}}}}
  \fontsize{20}{0}
  \selectfont\put(451.534,48.4833){\makebox(0,0)[t]{\textcolor[rgb]{0,0,0}{{40}}}}
  \fontsize{20}{0}
  \selectfont\put(521.28,48.4833){\makebox(0,0)[t]{\textcolor[rgb]{0,0,0}{{60}}}}
  \fontsize{20}{0}
  \selectfont\put(97.7988,53.4977){\makebox(0,0)[r]{\textcolor[rgb]{0,0,0}{{0}}}}
  \fontsize{20}{0}
  \selectfont\put(97.7988,81.7786){\makebox(0,0)[r]{\textcolor[rgb]{0,0,0}{{2000}}}}
  \fontsize{20}{0}
  \selectfont\put(97.7988,110.06){\makebox(0,0)[r]{\textcolor[rgb]{0,0,0}{{4000}}}}
  \fontsize{20}{0}
  \selectfont\put(97.7988,138.34){\makebox(0,0)[r]{\textcolor[rgb]{0,0,0}{{6000}}}}
  \fontsize{20}{0}
  \selectfont\put(97.7988,166.621){\makebox(0,0)[r]{\textcolor[rgb]{0,0,0}{{8000}}}}
  \fontsize{20}{0}
  \selectfont\put(97.7988,194.902){\makebox(0,0)[r]{\textcolor[rgb]{0,0,0}{{10000}}}}
  \end{picture}
  }
  \caption{Comparison of the actual distribution of local extreme
           values (lower panel) with the ones generated by the model
           in Definition~\ref{def:simplemodel} (upper panel)
           for $\eps=10^{-3}$. Except for a neighborhood around
           zero, both yield similar distributions. Notice, however,
           that the extreme values near zero do not affect the
           $L^\infty(0,1)$-norm of~$f$ anyway.}
  \label{fig:mag}
\end{figure}
\begin{figure}
  \centering
  \scalebox{0.7}{
  \setlength{\unitlength}{1pt}
  \begin{picture}(0,0)
  \includegraphics{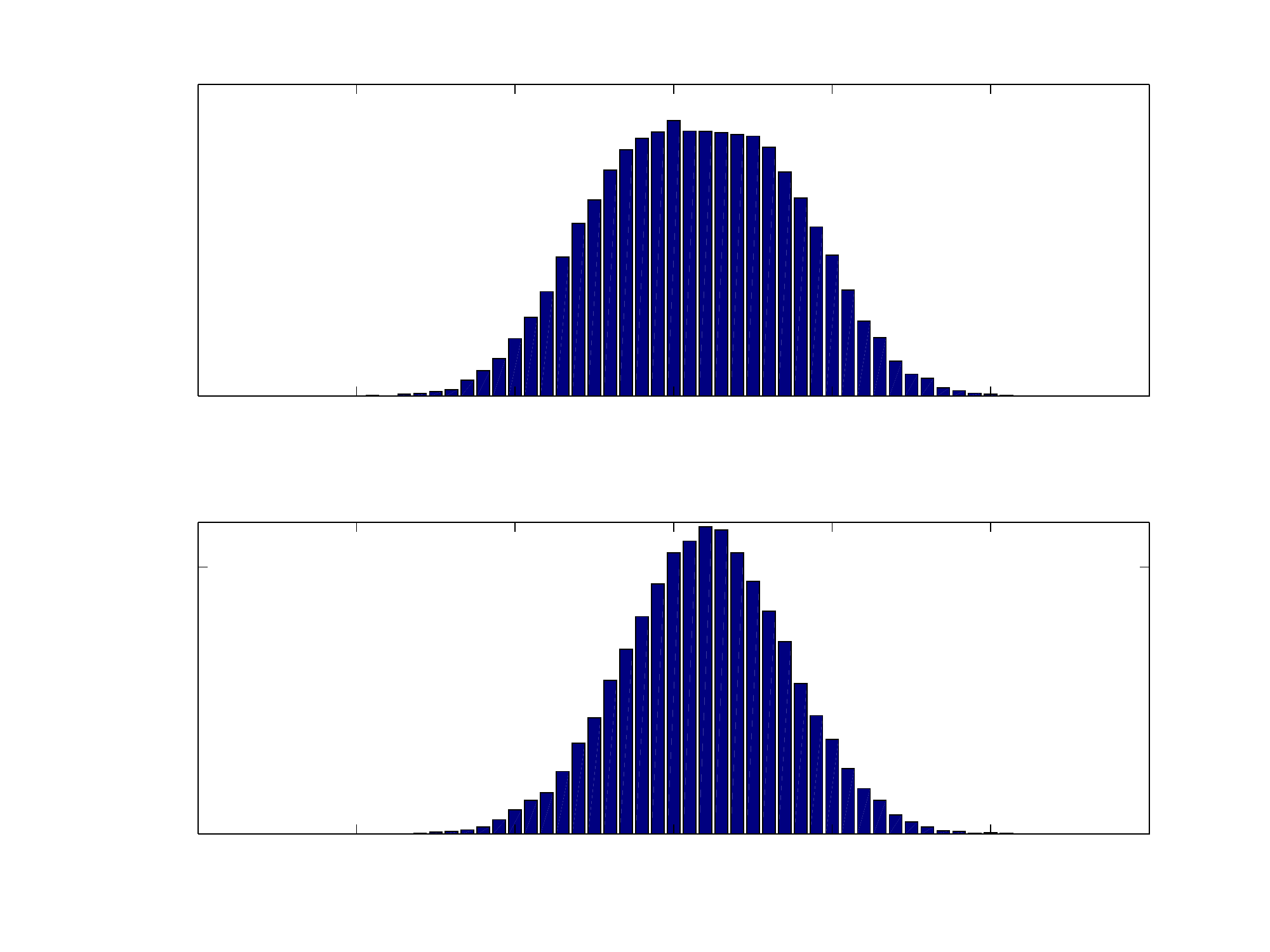}
  \end{picture}%
  \begin{picture}(576,432)(0,0)
  \fontsize{20}{0}
  \selectfont\put(161.717,247.203){\makebox(0,0)[t]{\textcolor[rgb]{0,0,0}{{30}}}}
  \fontsize{20}{0}
  \selectfont\put(233.63,247.203){\makebox(0,0)[t]{\textcolor[rgb]{0,0,0}{{40}}}}
  \fontsize{20}{0}
  \selectfont\put(305.542,247.203){\makebox(0,0)[t]{\textcolor[rgb]{0,0,0}{{50}}}}
  \fontsize{20}{0}
  \selectfont\put(377.455,247.203){\makebox(0,0)[t]{\textcolor[rgb]{0,0,0}{{60}}}}
  \fontsize{20}{0}
  \selectfont\put(449.367,247.203){\makebox(0,0)[t]{\textcolor[rgb]{0,0,0}{{70}}}}
  \fontsize{20}{0}
  \selectfont\put(521.28,247.203){\makebox(0,0)[t]{\textcolor[rgb]{0,0,0}{{80}}}}
  \fontsize{20}{0}
  \selectfont\put(84.799,252.218){\makebox(0,0)[r]{\textcolor[rgb]{0,0,0}{{0}}}}
  \fontsize{20}{0}
  \selectfont\put(84.799,393.622){\makebox(0,0)[r]{\textcolor[rgb]{0,0,0}{{3000}}}}
  \fontsize{20}{0}
  \selectfont\put(161.717,48.4833){\makebox(0,0)[t]{\textcolor[rgb]{0,0,0}{{30}}}}
  \fontsize{20}{0}
  \selectfont\put(233.63,48.4833){\makebox(0,0)[t]{\textcolor[rgb]{0,0,0}{{40}}}}
  \fontsize{20}{0}
  \selectfont\put(305.542,48.4833){\makebox(0,0)[t]{\textcolor[rgb]{0,0,0}{{50}}}}
  \fontsize{20}{0}
  \selectfont\put(377.455,48.4833){\makebox(0,0)[t]{\textcolor[rgb]{0,0,0}{{60}}}}
  \fontsize{20}{0}
  \selectfont\put(449.367,48.4833){\makebox(0,0)[t]{\textcolor[rgb]{0,0,0}{{70}}}}
  \fontsize{20}{0}
  \selectfont\put(521.28,48.4833){\makebox(0,0)[t]{\textcolor[rgb]{0,0,0}{{80}}}}
  \fontsize{20}{0}
  \selectfont\put(84.799,53.4977){\makebox(0,0)[r]{\textcolor[rgb]{0,0,0}{{0}}}}
  \fontsize{20}{0}
  \selectfont\put(84.799,174.702){\makebox(0,0)[r]{\textcolor[rgb]{0,0,0}{{3000}}}}
  \end{picture}
  }
  \caption{Comparison of the actual distribution of the match numbers
           of local extrema (upper panel) with the ones generated by
           the model in Definition~\ref{def:simplemodel} (lower panel)
           for $\eps=10^{-3}$.}
  \label{fig:match}
\end{figure}
\begin{figure}
  \centering
  \scalebox{0.7}{
  \setlength{\unitlength}{1pt}
  \begin{picture}(0,0)
  \includegraphics{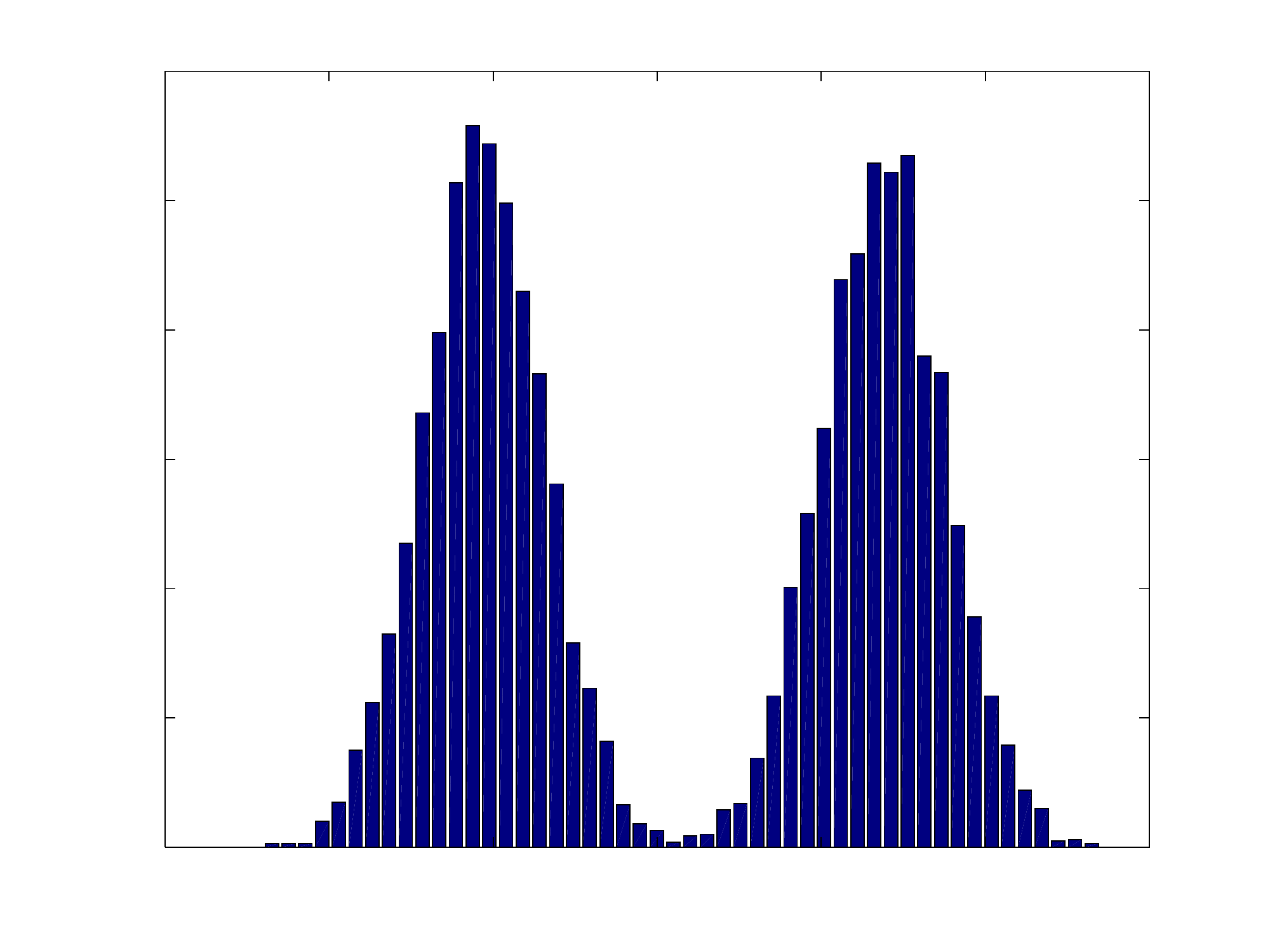}
  \end{picture}%
  \begin{picture}(576,432)(0,0)
  \fontsize{20}{0}
  \selectfont\put(74.88,42.5189){\makebox(0,0)[t]{\textcolor[rgb]{0,0,0}{{20}}}}
  \fontsize{20}{0}
  \selectfont\put(149.28,42.5189){\makebox(0,0)[t]{\textcolor[rgb]{0,0,0}{{30}}}}
  \fontsize{20}{0}
  \selectfont\put(223.68,42.5189){\makebox(0,0)[t]{\textcolor[rgb]{0,0,0}{{40}}}}
  \fontsize{20}{0}
  \selectfont\put(298.08,42.5189){\makebox(0,0)[t]{\textcolor[rgb]{0,0,0}{{50}}}}
  \fontsize{20}{0}
  \selectfont\put(372.48,42.5189){\makebox(0,0)[t]{\textcolor[rgb]{0,0,0}{{60}}}}
  \fontsize{20}{0}
  \selectfont\put(446.88,42.5189){\makebox(0,0)[t]{\textcolor[rgb]{0,0,0}{{70}}}}
  \fontsize{20}{0}
  \selectfont\put(521.28,42.5189){\makebox(0,0)[t]{\textcolor[rgb]{0,0,0}{{80}}}}
  \fontsize{20}{0}
  \selectfont\put(69.8755,47.52){\makebox(0,0)[r]{\textcolor[rgb]{0,0,0}{{0}}}}
  \fontsize{20}{0}
  \selectfont\put(69.8755,106.2){\makebox(0,0)[r]{\textcolor[rgb]{0,0,0}{{100}}}}
  \fontsize{20}{0}
  \selectfont\put(69.8755,164.88){\makebox(0,0)[r]{\textcolor[rgb]{0,0,0}{{200}}}}
  \fontsize{20}{0}
  \selectfont\put(69.8755,223.56){\makebox(0,0)[r]{\textcolor[rgb]{0,0,0}{{300}}}}
  \fontsize{20}{0}
  \selectfont\put(69.8755,282.24){\makebox(0,0)[r]{\textcolor[rgb]{0,0,0}{{400}}}}
  \fontsize{20}{0}
  \selectfont\put(69.8755,340.92){\makebox(0,0)[r]{\textcolor[rgb]{0,0,0}{{500}}}}
  \fontsize{20}{0}
  \selectfont\put(69.8755,399.6){\makebox(0,0)[r]{\textcolor[rgb]{0,0,0}{{600}}}}
  \end{picture}
  }
  \caption{Histogram of the match numbers of global extrema. Compare with the upper panel in figure \ref{fig:match}. A comparison with the model doesn't make sense as the match numbers do not correspond to model extrema.}
  \label{fig:globextr}
\end{figure}
\begin{figure}
  \centering
  \scalebox{0.7}{
  \setlength{\unitlength}{1pt}
  \begin{picture}(0,0)
  \includegraphics{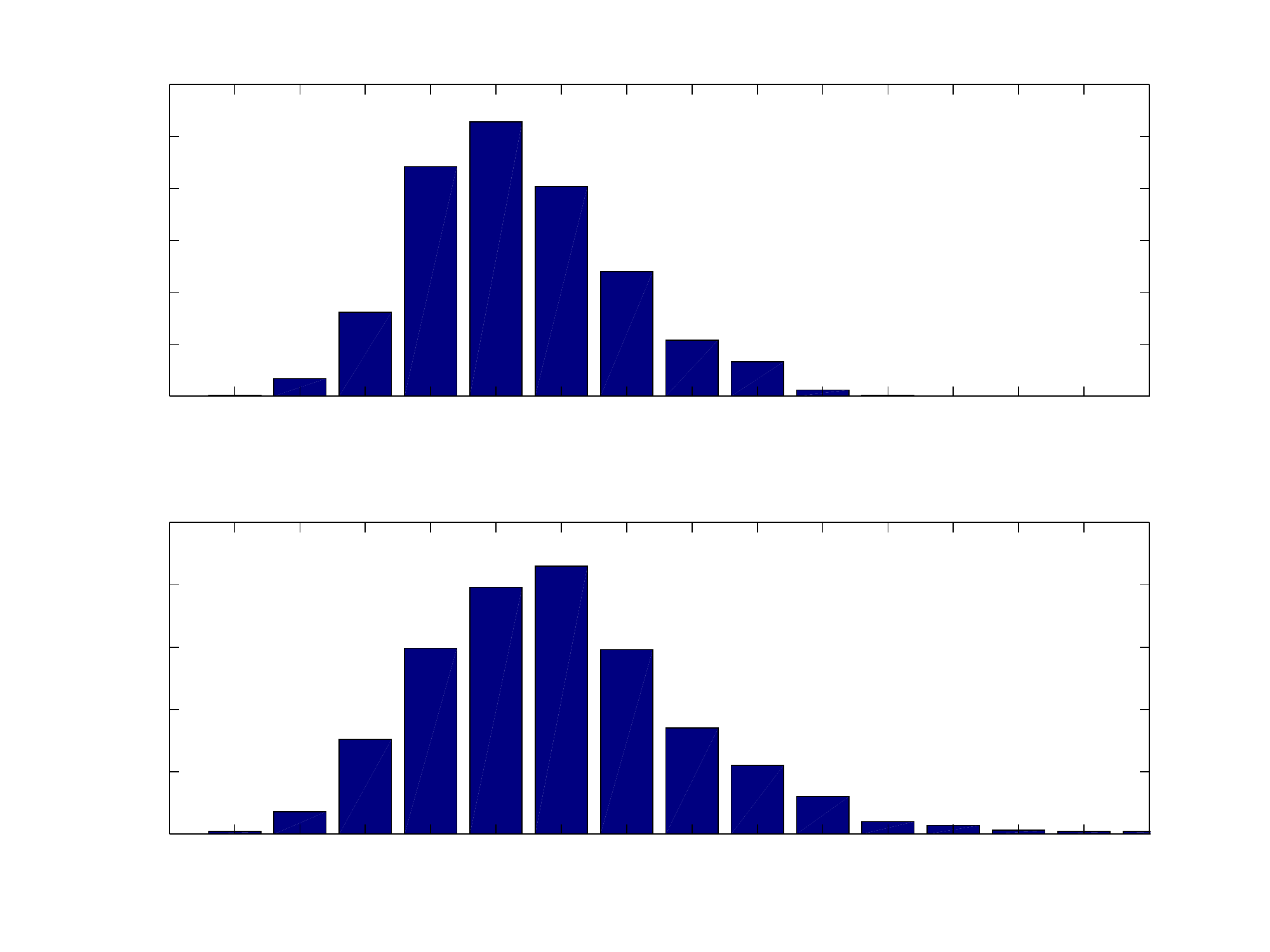}
  \end{picture}%
  \begin{picture}(576,432)(0,0)
  \fontsize{20}{0}
  \selectfont\put(76.8045,247.203){\makebox(0,0)[t]{\textcolor[rgb]{0,0,0}{{2}}}}
  \fontsize{20}{0}
  \selectfont\put(106.436,247.203){\makebox(0,0)[t]{\textcolor[rgb]{0,0,0}{{3}}}}
  \fontsize{20}{0}
  \selectfont\put(136.068,247.203){\makebox(0,0)[t]{\textcolor[rgb]{0,0,0}{{4}}}}
  \fontsize{20}{0}
  \selectfont\put(165.7,247.203){\makebox(0,0)[t]{\textcolor[rgb]{0,0,0}{{5}}}}
  \fontsize{20}{0}
  \selectfont\put(195.331,247.203){\makebox(0,0)[t]{\textcolor[rgb]{0,0,0}{{6}}}}
  \fontsize{20}{0}
  \selectfont\put(224.963,247.203){\makebox(0,0)[t]{\textcolor[rgb]{0,0,0}{{7}}}}
  \fontsize{20}{0}
  \selectfont\put(254.595,247.203){\makebox(0,0)[t]{\textcolor[rgb]{0,0,0}{{8}}}}
  \fontsize{20}{0}
  \selectfont\put(284.226,247.203){\makebox(0,0)[t]{\textcolor[rgb]{0,0,0}{{9}}}}
  \fontsize{20}{0}
  \selectfont\put(313.858,247.203){\makebox(0,0)[t]{\textcolor[rgb]{0,0,0}{{10}}}}
  \fontsize{20}{0}
  \selectfont\put(343.49,247.203){\makebox(0,0)[t]{\textcolor[rgb]{0,0,0}{{11}}}}
  \fontsize{20}{0}
  \selectfont\put(373.122,247.203){\makebox(0,0)[t]{\textcolor[rgb]{0,0,0}{{12}}}}
  \fontsize{20}{0}
  \selectfont\put(402.753,247.203){\makebox(0,0)[t]{\textcolor[rgb]{0,0,0}{{13}}}}
  \fontsize{20}{0}
  \selectfont\put(432.385,247.203){\makebox(0,0)[t]{\textcolor[rgb]{0,0,0}{{14}}}}
  \fontsize{20}{0}
  \selectfont\put(462.017,247.203){\makebox(0,0)[t]{\textcolor[rgb]{0,0,0}{{15}}}}
  \fontsize{20}{0}
  \selectfont\put(491.648,247.203){\makebox(0,0)[t]{\textcolor[rgb]{0,0,0}{{16}}}}
  \fontsize{20}{0}
  \selectfont\put(521.28,247.203){\makebox(0,0)[t]{\textcolor[rgb]{0,0,0}{{17}}}}
  \fontsize{20}{0}
  \selectfont\put(71.7992,252.218){\makebox(0,0)[r]{\textcolor[rgb]{0,0,0}{{0}}}}
  \fontsize{20}{0}
  \selectfont\put(71.7992,275.785){\makebox(0,0)[r]{\textcolor[rgb]{0,0,0}{{50}}}}
  \fontsize{20}{0}
  \selectfont\put(71.7992,299.353){\makebox(0,0)[r]{\textcolor[rgb]{0,0,0}{{100}}}}
  \fontsize{20}{0}
  \selectfont\put(71.7992,322.92){\makebox(0,0)[r]{\textcolor[rgb]{0,0,0}{{150}}}}
  \fontsize{20}{0}
  \selectfont\put(71.7992,346.487){\makebox(0,0)[r]{\textcolor[rgb]{0,0,0}{{200}}}}
  \fontsize{20}{0}
  \selectfont\put(71.7992,370.055){\makebox(0,0)[r]{\textcolor[rgb]{0,0,0}{{250}}}}
  \fontsize{20}{0}
  \selectfont\put(71.7992,393.622){\makebox(0,0)[r]{\textcolor[rgb]{0,0,0}{{300}}}}
  \fontsize{20}{0}
  \selectfont\put(76.8045,48.4833){\makebox(0,0)[t]{\textcolor[rgb]{0,0,0}{{2}}}}
  \fontsize{20}{0}
  \selectfont\put(106.436,48.4833){\makebox(0,0)[t]{\textcolor[rgb]{0,0,0}{{3}}}}
  \fontsize{20}{0}
  \selectfont\put(136.068,48.4833){\makebox(0,0)[t]{\textcolor[rgb]{0,0,0}{{4}}}}
  \fontsize{20}{0}
  \selectfont\put(165.7,48.4833){\makebox(0,0)[t]{\textcolor[rgb]{0,0,0}{{5}}}}
  \fontsize{20}{0}
  \selectfont\put(195.331,48.4833){\makebox(0,0)[t]{\textcolor[rgb]{0,0,0}{{6}}}}
  \fontsize{20}{0}
  \selectfont\put(224.963,48.4833){\makebox(0,0)[t]{\textcolor[rgb]{0,0,0}{{7}}}}
  \fontsize{20}{0}
  \selectfont\put(254.595,48.4833){\makebox(0,0)[t]{\textcolor[rgb]{0,0,0}{{8}}}}
  \fontsize{20}{0}
  \selectfont\put(284.226,48.4833){\makebox(0,0)[t]{\textcolor[rgb]{0,0,0}{{9}}}}
  \fontsize{20}{0}
  \selectfont\put(313.858,48.4833){\makebox(0,0)[t]{\textcolor[rgb]{0,0,0}{{10}}}}
  \fontsize{20}{0}
  \selectfont\put(343.49,48.4833){\makebox(0,0)[t]{\textcolor[rgb]{0,0,0}{{11}}}}
  \fontsize{20}{0}
  \selectfont\put(373.122,48.4833){\makebox(0,0)[t]{\textcolor[rgb]{0,0,0}{{12}}}}
  \fontsize{20}{0}
  \selectfont\put(402.753,48.4833){\makebox(0,0)[t]{\textcolor[rgb]{0,0,0}{{13}}}}
  \fontsize{20}{0}
  \selectfont\put(432.385,48.4833){\makebox(0,0)[t]{\textcolor[rgb]{0,0,0}{{14}}}}
  \fontsize{20}{0}
  \selectfont\put(462.017,48.4833){\makebox(0,0)[t]{\textcolor[rgb]{0,0,0}{{15}}}}
  \fontsize{20}{0}
  \selectfont\put(491.648,48.4833){\makebox(0,0)[t]{\textcolor[rgb]{0,0,0}{{16}}}}
  \fontsize{20}{0}
  \selectfont\put(521.28,48.4833){\makebox(0,0)[t]{\textcolor[rgb]{0,0,0}{{17}}}}
  \fontsize{20}{0}
  \selectfont\put(71.7992,53.4977){\makebox(0,0)[r]{\textcolor[rgb]{0,0,0}{{0}}}}
  \fontsize{20}{0}
  \selectfont\put(71.7992,81.7786){\makebox(0,0)[r]{\textcolor[rgb]{0,0,0}{{50}}}}
  \fontsize{20}{0}
  \selectfont\put(71.7992,110.06){\makebox(0,0)[r]{\textcolor[rgb]{0,0,0}{{100}}}}
  \fontsize{20}{0}
  \selectfont\put(71.7992,138.34){\makebox(0,0)[r]{\textcolor[rgb]{0,0,0}{{150}}}}
  \fontsize{20}{0}
  \selectfont\put(71.7992,166.621){\makebox(0,0)[r]{\textcolor[rgb]{0,0,0}{{200}}}}
  \fontsize{20}{0}
  \selectfont\put(71.7992,194.902){\makebox(0,0)[r]{\textcolor[rgb]{0,0,0}{{250}}}}
  \end{picture}
  }
  \caption{Comparison of the actual global extreme values (lower
           panel) with the ones generated by the model in
           Definition~\ref{def:simplemodel} (upper panel)
           for $\eps=10^{-2}$.}
  \label{fig:globmag_e-2}
\end{figure}
\begin{figure}
  \centering
    \scalebox{0.7}{
    \setlength{\unitlength}{1pt}
    \begin{picture}(0,0)
    \includegraphics{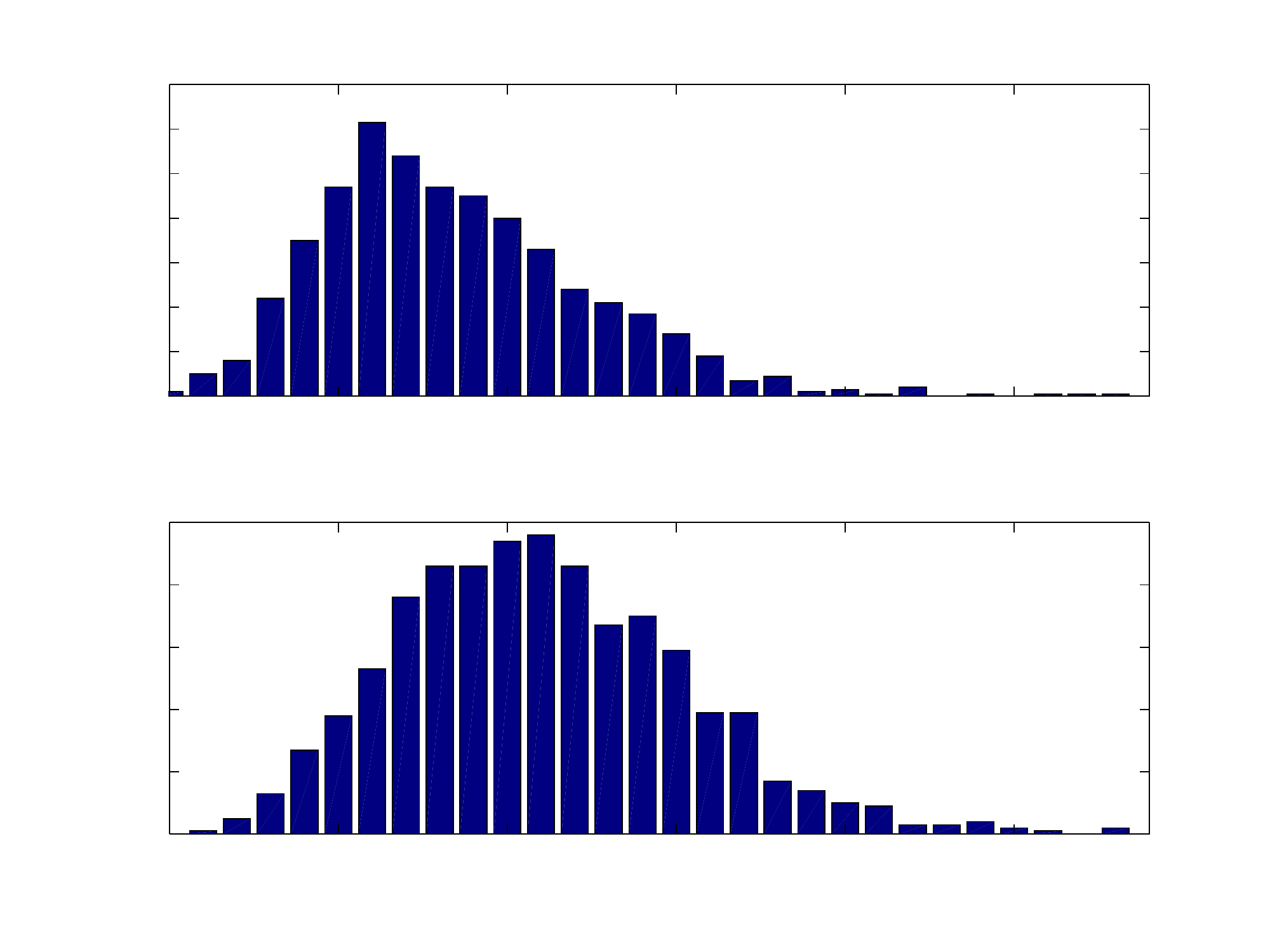}
    \end{picture}%
    \begin{picture}(576,432)(0,0)
    \fontsize{20}{0}
    \selectfont\put(76.8045,247.203){\makebox(0,0)[t]{\textcolor[rgb]{0,0,0}{{22}}}}
    \fontsize{20}{0}
    \selectfont\put(153.438,247.203){\makebox(0,0)[t]{\textcolor[rgb]{0,0,0}{{27}}}}
    \fontsize{20}{0}
    \selectfont\put(230.072,247.203){\makebox(0,0)[t]{\textcolor[rgb]{0,0,0}{{32}}}}
    \fontsize{20}{0}
    \selectfont\put(306.706,247.203){\makebox(0,0)[t]{\textcolor[rgb]{0,0,0}{{37}}}}
    \fontsize{20}{0}
    \selectfont\put(383.339,247.203){\makebox(0,0)[t]{\textcolor[rgb]{0,0,0}{{42}}}}
    \fontsize{20}{0}
    \selectfont\put(459.973,247.203){\makebox(0,0)[t]{\textcolor[rgb]{0,0,0}{{47}}}}
    \fontsize{20}{0}
    \selectfont\put(71.7991,252.218){\makebox(0,0)[r]{\textcolor[rgb]{0,0,0}{{0}}}}
    \fontsize{20}{0}
    \selectfont\put(71.7991,272.418){\makebox(0,0)[r]{\textcolor[rgb]{0,0,0}{{20}}}}
    \fontsize{20}{0}
    \selectfont\put(71.7991,292.619){\makebox(0,0)[r]{\textcolor[rgb]{0,0,0}{{40}}}}
    \fontsize{20}{0}
    \selectfont\put(71.7991,312.82){\makebox(0,0)[r]{\textcolor[rgb]{0,0,0}{{60}}}}
    \fontsize{20}{0}
    \selectfont\put(71.7991,333.02){\makebox(0,0)[r]{\textcolor[rgb]{0,0,0}{{80}}}}
    \fontsize{20}{0}
    \selectfont\put(71.7991,353.221){\makebox(0,0)[r]{\textcolor[rgb]{0,0,0}{{100}}}}
    \fontsize{20}{0}
    \selectfont\put(71.7991,373.422){\makebox(0,0)[r]{\textcolor[rgb]{0,0,0}{{120}}}}
    \fontsize{20}{0}
    \selectfont\put(71.7991,393.622){\makebox(0,0)[r]{\textcolor[rgb]{0,0,0}{{140}}}}
    \fontsize{20}{0}
    \selectfont\put(76.8045,48.4833){\makebox(0,0)[t]{\textcolor[rgb]{0,0,0}{{22}}}}
    \fontsize{20}{0}
    \selectfont\put(153.438,48.4833){\makebox(0,0)[t]{\textcolor[rgb]{0,0,0}{{27}}}}
    \fontsize{20}{0}
    \selectfont\put(230.072,48.4833){\makebox(0,0)[t]{\textcolor[rgb]{0,0,0}{{32}}}}
    \fontsize{20}{0}
    \selectfont\put(306.706,48.4833){\makebox(0,0)[t]{\textcolor[rgb]{0,0,0}{{37}}}}
    \fontsize{20}{0}
    \selectfont\put(383.339,48.4833){\makebox(0,0)[t]{\textcolor[rgb]{0,0,0}{{42}}}}
    \fontsize{20}{0}
    \selectfont\put(459.973,48.4833){\makebox(0,0)[t]{\textcolor[rgb]{0,0,0}{{47}}}}
    \fontsize{20}{0}
    \selectfont\put(71.7991,53.4977){\makebox(0,0)[r]{\textcolor[rgb]{0,0,0}{{0}}}}
    \fontsize{20}{0}
    \selectfont\put(71.7991,81.7786){\makebox(0,0)[r]{\textcolor[rgb]{0,0,0}{{20}}}}
    \fontsize{20}{0}
    \selectfont\put(71.7991,110.06){\makebox(0,0)[r]{\textcolor[rgb]{0,0,0}{{40}}}}
    \fontsize{20}{0}
    \selectfont\put(71.7991,138.34){\makebox(0,0)[r]{\textcolor[rgb]{0,0,0}{{60}}}}
    \fontsize{20}{0}
    \selectfont\put(71.7991,166.621){\makebox(0,0)[r]{\textcolor[rgb]{0,0,0}{{80}}}}
    \fontsize{20}{0}
    \selectfont\put(71.7991,194.902){\makebox(0,0)[r]{\textcolor[rgb]{0,0,0}{{100}}}}
    \end{picture}
    }
  \caption{Comparison of the actual global extreme values (lower
           panel) with the ones generated by the model in
           Definition~\ref{def:simplemodel} (upper panel)
           for $\eps=10^{-3}$.}
          \label{fig:globmag_e-3}
\end{figure}
\begin{figure}
  \centering
    \scalebox{0.7}{
    \setlength{\unitlength}{1pt}
    \begin{picture}(0,0)
    \includegraphics{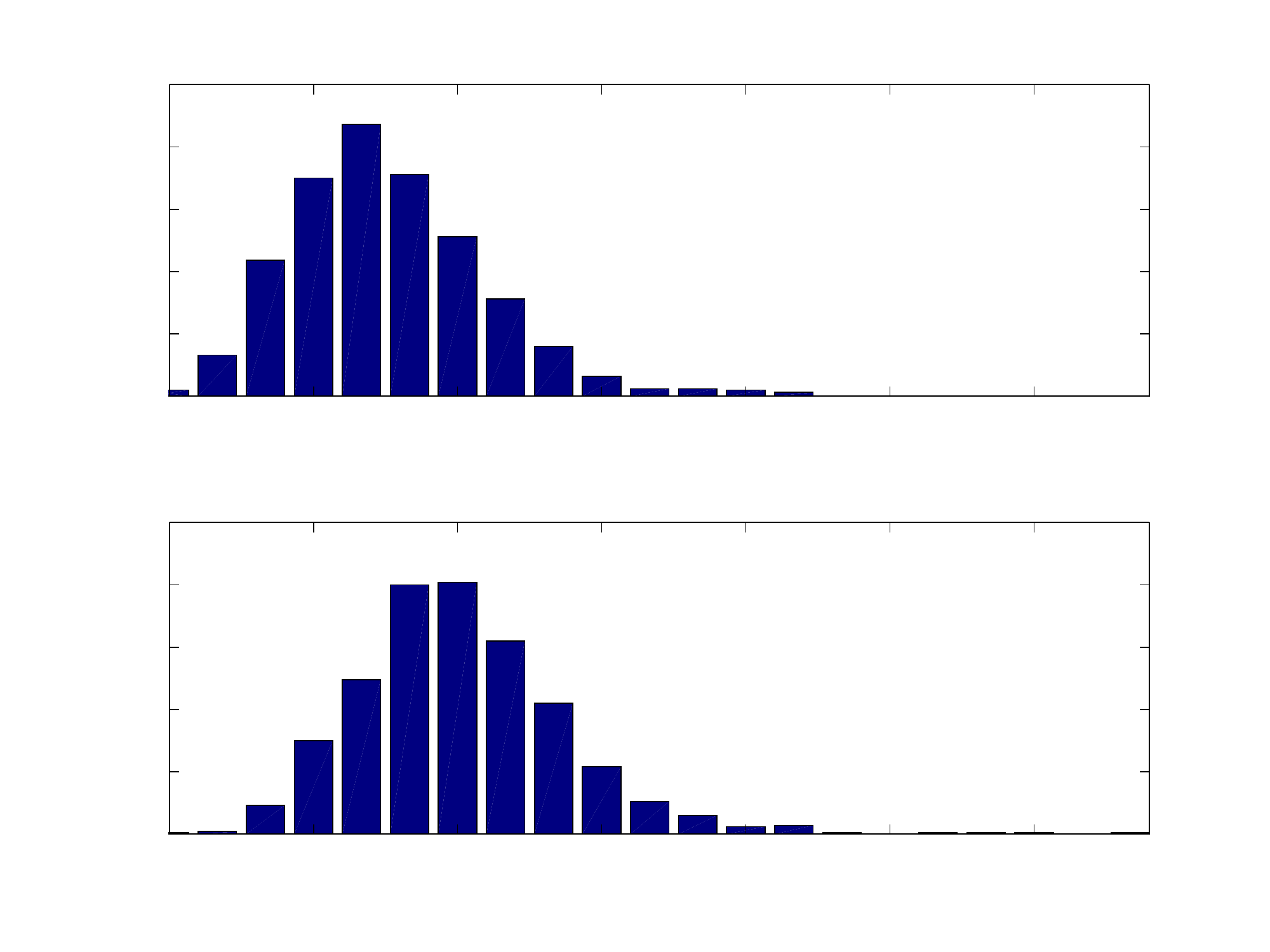}
    \end{picture}%
    \begin{picture}(576,432)(0,0)
    \fontsize{20}{0}
    \selectfont\put(76.8045,247.203){\makebox(0,0)[t]{\textcolor[rgb]{0,0,0}{{94}}}}
    \fontsize{20}{0}
    \selectfont\put(142.169,247.203){\makebox(0,0)[t]{\textcolor[rgb]{0,0,0}{{109}}}}
    \fontsize{20}{0}
    \selectfont\put(207.533,247.203){\makebox(0,0)[t]{\textcolor[rgb]{0,0,0}{{124}}}}
    \fontsize{20}{0}
    \selectfont\put(272.897,247.203){\makebox(0,0)[t]{\textcolor[rgb]{0,0,0}{{139}}}}
    \fontsize{20}{0}
    \selectfont\put(338.261,247.203){\makebox(0,0)[t]{\textcolor[rgb]{0,0,0}{{154}}}}
    \fontsize{20}{0}
    \selectfont\put(403.625,247.203){\makebox(0,0)[t]{\textcolor[rgb]{0,0,0}{{169}}}}
    \fontsize{20}{0}
    \selectfont\put(468.989,247.203){\makebox(0,0)[t]{\textcolor[rgb]{0,0,0}{{184}}}}
    \fontsize{20}{0}
    \selectfont\put(71.7992,252.218){\makebox(0,0)[r]{\textcolor[rgb]{0,0,0}{{0}}}}
    \fontsize{20}{0}
    \selectfont\put(71.7992,280.499){\makebox(0,0)[r]{\textcolor[rgb]{0,0,0}{{50}}}}
    \fontsize{20}{0}
    \selectfont\put(71.7992,308.78){\makebox(0,0)[r]{\textcolor[rgb]{0,0,0}{{100}}}}
    \fontsize{20}{0}
    \selectfont\put(71.7992,337.06){\makebox(0,0)[r]{\textcolor[rgb]{0,0,0}{{150}}}}
    \fontsize{20}{0}
    \selectfont\put(71.7992,365.341){\makebox(0,0)[r]{\textcolor[rgb]{0,0,0}{{200}}}}
    \fontsize{20}{0}
    \selectfont\put(71.7992,393.622){\makebox(0,0)[r]{\textcolor[rgb]{0,0,0}{{250}}}}
    \fontsize{20}{0}
    \selectfont\put(76.8045,48.4833){\makebox(0,0)[t]{\textcolor[rgb]{0,0,0}{{94}}}}
    \fontsize{20}{0}
    \selectfont\put(142.169,48.4833){\makebox(0,0)[t]{\textcolor[rgb]{0,0,0}{{109}}}}
    \fontsize{20}{0}
    \selectfont\put(207.533,48.4833){\makebox(0,0)[t]{\textcolor[rgb]{0,0,0}{{124}}}}
    \fontsize{20}{0}
    \selectfont\put(272.897,48.4833){\makebox(0,0)[t]{\textcolor[rgb]{0,0,0}{{139}}}}
    \fontsize{20}{0}
    \selectfont\put(338.261,48.4833){\makebox(0,0)[t]{\textcolor[rgb]{0,0,0}{{154}}}}
    \fontsize{20}{0}
    \selectfont\put(403.625,48.4833){\makebox(0,0)[t]{\textcolor[rgb]{0,0,0}{{169}}}}
    \fontsize{20}{0}
    \selectfont\put(468.989,48.4833){\makebox(0,0)[t]{\textcolor[rgb]{0,0,0}{{184}}}}
    \fontsize{20}{0}
    \selectfont\put(71.7992,53.4977){\makebox(0,0)[r]{\textcolor[rgb]{0,0,0}{{0}}}}
    \fontsize{20}{0}
    \selectfont\put(71.7992,81.7786){\makebox(0,0)[r]{\textcolor[rgb]{0,0,0}{{50}}}}
    \fontsize{20}{0}
    \selectfont\put(71.7992,110.06){\makebox(0,0)[r]{\textcolor[rgb]{0,0,0}{{100}}}}
    \fontsize{20}{0}
    \selectfont\put(71.7992,138.34){\makebox(0,0)[r]{\textcolor[rgb]{0,0,0}{{150}}}}
    \fontsize{20}{0}
    \selectfont\put(71.7992,166.621){\makebox(0,0)[r]{\textcolor[rgb]{0,0,0}{{200}}}}
    \fontsize{20}{0}
    \selectfont\put(71.7992,194.902){\makebox(0,0)[r]{\textcolor[rgb]{0,0,0}{{250}}}}
    \end{picture}
    }
  \caption{Comparison of the actual global extreme values (lower
           panel) with the ones generated by the model in
           Definition~\ref{def:simplemodel} (upper panel)
           for $\eps=10^{-4}$.}
  \label{fig:globmag_e-4}
\end{figure}
%
%
%
%
\subsection{Growth Rate of the Maximum Norm}
\label{sec:result}
%
%
%
%
Can we use the simplified model in Definition~\ref{def:simplemodel}
to derive the height of the plateaux in Figure~\ref{fig:wanne}, and
in particular its dependence on~$\eps$? For this, notice that the
Moivre-Laplace theorem can be used in combination with the strong
law of large numbers to obtain a precise approximation result for
the $m$-th local extremal values, which in the model are given by
\begin{displaymath}
  \frac{y_m}{\|f^\text{\sc  Model}\|_{L^2(0,1)}} \sim
  \mathcal{N}\left( \mu = 0, \; \sigma^2 = 1/2 \right)
\end{displaymath}
Note that in this formulation, we again ignore the normalization
factors~$\sqrt{2}$, as they can be absorbed into the proportionality
constant later on. We then obtain the approximation
\begin{displaymath}
  \P\left( \max_{m=1,\ldots,\bar k}
    \frac{|y_m|}{\left\|f^\text{\sc Model}\right\|_{L^2(0,1)}} < t
    \right) =
  \prod_{m=1,\ldots,\bar k}
    \P\left( \frac{|y_m|}{\left\|f^\text{\sc Model}
    \right\|_{L^2(0,1)}} < t \right) \leq
  \left( \int_{-\infty}^t \frac{e^{-s^2}}{\sqrt{\pi}} \, ds
    \right)^{\bar k} \; .
\end{displaymath}
The improper integral can be bounded below in the form
\begin{displaymath}
  \int_{-\infty}^t \frac{e^{-s^2}}{\sqrt{\pi}} \, ds \; = \;
  1 - \int_t^\infty \frac{e^{-s^2}}{\sqrt{\pi}} \, ds \; \ge \;
  1 -C \int_t^\infty e^{-cs} \, ds \; = \;
  1 - C e^{-ct}
\end{displaymath}
for every positive constant $c>0$ and all sufficiently large~$t$.
If we now use Definition~\ref{def:simplemodel} and choose
$t = \sqrt{2} \ln( 1 / \eps)$, then one obtains from the above
calculations the estimate
\begin{displaymath}
  \P\left( \frac{\left\|f^\text{\sc Model}\right\|_{L^\infty(0,1)}}
    {\left\|f^\text{\sc Model}\right\|_{L^2(0,1)}} < t \right) \; = \;
  \left( \int_{-\infty}^{\ln(1/\eps)} \frac{e^{-s^2}}{\sqrt{\pi}} \, ds
    \right)^{(\alpha^\oplus + \alpha^\ominus) / (2\eps)} \; \ge \;
  \left( 1 - C \eps^c \right)^{D / \eps} \; ,
\end{displaymath}
see also~(\ref{e:defalphapm}). One can easily see that the term
on the right-hand side satisfies
\begin{displaymath}
  \lim_{\eps \to 0} \left( 1 - C \eps^{c} \right)^{D / \eps} \; = \;
  \lim_{\delta \to 0} \left( 1 - \delta^c \right)^{E/\delta} \; = \; 1
  \quad\mbox{ for all }\quad
  c > 1 \; .
\end{displaymath}
This finally furnishes the following result.
\begin{theorem}\label{thm:main2}
Consider the simplified extreme value model introduced
in Definition~\ref{def:simplemodel}. Then for any fixed 
constant~$C > 0$ we have
\begin{displaymath}
  \P\left( \frac{\left\|f^\text{\sc Model}\right\|_{L^\infty(0,1)}}
    {\left\|f^\text{\sc Model}\right\|_{L^2(0,1)}} <
    C \log\eps^{-1} \right)
  \;\xrightarrow\; 1
  \quad\mbox{ as }\quad \eps \to 0 \; .
\end{displaymath}
\end{theorem}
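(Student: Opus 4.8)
The plan is to unwind Definition~\ref{def:simplemodel} and reduce the claim to a standard large deviation estimate for the maximum of independent Gaussians. Setting $z_m = y_m / \sqrt{|\Lambda|}$, the definition says that the random variables $z_1,\ldots,z_{\bar k}$ are independent and $\mathcal N(0,1/2)$-distributed, with common density $s \mapsto \pi^{-1/2} e^{-s^2}$, and that
\begin{displaymath}
  \frac{\left\|f^\text{\sc Model}\right\|_{L^\infty(0,1)}}
       {\left\|f^\text{\sc Model}\right\|_{L^2(0,1)}}
  = \sqrt{2}\, \max_{m=1,\ldots,\bar k} |z_m| \; .
\end{displaymath}
Hence, with $t_\eps := \tfrac{C}{\sqrt{2}}\log\eps^{-1}$, it suffices to prove that $\P\bigl(\max_m |z_m| < t_\eps\bigr) \to 1$ as $\eps \to 0$.

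First I would use independence to write, for all sufficiently small $\eps$,
\begin{displaymath}
  \P\Bigl(\max_{m=1,\ldots,\bar k} |z_m| < t_\eps\Bigr)
  = \bigl(1 - p_\eps\bigr)^{\bar k},
  \qquad
  p_\eps = \frac{2}{\sqrt{\pi}}\int_{t_\eps}^\infty e^{-s^2}\,ds \; ,
\end{displaymath}
and then bound the Gaussian tail exactly as in the discussion preceding the theorem: for any fixed constant $c>0$ one has $e^{-s^2}\le e^{-cs}$ once $s\ge c$, so that $p_\eps \le \tfrac{2}{c\sqrt{\pi}}e^{-c t_\eps} = \tfrac{2}{c\sqrt{\pi}}\,\eps^{\,cC/\sqrt{2}}$ as soon as $\eps$ is small enough that $t_\eps\ge c$.

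The decisive step is the choice of the free parameter $c$. Since~$C$ is fixed, I would pick $c>\sqrt{2}/C$, so that the exponent $a:=cC/\sqrt{2}$ satisfies $a>1$. By~(\ref{eq:deflambda:kpm}) and~(\ref{e:defalphapm}) the quantity $\bar k=(\km+\kp)/2$ is of order $\eps^{-1}$, say $\bar k\le D/\eps$ for all small $\eps$; hence, since $0<1-p_\eps<1$, monotonicity in the exponent gives
\begin{displaymath}
  \P\Bigl(\max_m |z_m| < t_\eps\Bigr)
  = \bigl(1-p_\eps\bigr)^{\bar k}
  \ge \Bigl(1 - \tfrac{2}{c\sqrt{\pi}}\,\eps^{\,a}\Bigr)^{D/\eps} \; ,
\end{displaymath}
and the elementary limit $\lim_{\delta\to0}(1-\delta^a)^{E/\delta}=1$ for $a>1$ --- obtained by taking logarithms, since $\tfrac1\delta\log(1-\delta^a)\sim-\delta^{\,a-1}\to0$ --- completes the argument. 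I do not anticipate a genuine obstacle here: the only subtlety is that the statement is asserted for \emph{every} $C>0$, in particular for arbitrarily small $C$, which is precisely why $c$ must be kept as a free parameter and enlarged in terms of $C$ rather than fixed once and for all. Alternatively, one could invoke the sharper bound $\int_{t_\eps}^\infty e^{-s^2}\,ds\le e^{-t_\eps^2}/(2t_\eps)$, so that $p_\eps$ decays faster than any power of $\eps$ and the conclusion follows for all $C>0$ without optimizing~$c$.
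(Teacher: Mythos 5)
Your proposal is correct and follows essentially the same route as the paper: reduce to the maximum of $\bar k\sim\eps^{-1}$ independent $\mathcal N(0,1/2)$ variables, bound the Gaussian tail by $e^{-cs}$, and conclude via $(1-K\eps^{a})^{D/\eps}\to1$ for $a>1$. Your explicit choice $c>\sqrt{2}/C$ (so that the exponent exceeds $1$ for \emph{every} fixed $C>0$) actually makes the parameter bookkeeping cleaner than in the paper's own write-up, which fixes $t=\sqrt{2}\ln(1/\eps)$ and leaves the dependence of $c$ on $C$ implicit.
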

Theorem~\ref{thm:main2} improves the estimate obtained in
Theorem~\ref{thm:main}, but for the simplified extreme value
model. In fact, numerical simulations indicate that the
logarithmic bound describes the growth of the plateaux
heights in Figure~\ref{fig:wanne} precisely.

Before closing this section, we would like to point out a number of
shortcomings of our simplified model. First of all, the model ignores
any dependencies between close extrema. While this simplifies our model
tremendously, it does not seem to alter the quality of its prediction in
a significant way. Subsequent extrema are strongly dependent, but also of
roughly the same magnitude. Since we are only interested in the order
of magnitude of the norm ratio growth as $\eps \to 0$, we do not expect
any large negative effects.

Another shortcoming, however, is more serious. As shown in
Figure~\ref{fig:mag}, while the local extrema do indeed exhibit a
binomially shaped match number distribution, the distribution of their
function values differs from the model in a major point --- only rarely
are local extrema observed whose values are close to zero. For our
purposes, however, this is not too important. We are interested in the
behavior of the $L^\infty(0,1)$-norm, and therefore in the global extremal
value. The latter one is realized via local extrema with high match numbers.
On the other hand, our modeling error only affects extremal values with
low match numbers. For simulating and predicting the $L^\infty(0,1)$-norm, 
this can be ignored. We refer the reader again to Figures~\ref{fig:globmag_e-2}
through~\ref{fig:globmag}.
%
%
%
%
\subsection{Generalization to Higher Dimensions}
%
%
%
%
While the bulk of the paper considered the case of random Fourier
cosine sums on one-dimensional domains, the extension to higher
dimensions is not difficult. 
\db{First the results of Section\ref{sec:brute} 
hold almost verbatim with only minor modifications.}
\db{In section \ref{sec:forcing} in case of a square or cube domain, 
the role of the boundary points is taken over by the corners, 
and,  when we force the signs, we expect similar numercial results with a plateau in the middle. 
They would just be significantly more time consuming, as the number of terms in the series
is on the order of $\eps^{-d}$ and grows with the dimension $d$. }

\db{Finally, we give a brief discussion of the simplified model derived in section\ref{sec:modelextreme}.}
We only consider the case of a two-dimensional
square domain, and leave the straightforward generalization to higher
dimensions to the reader. Consider the domain~$G = [0,1]^2$ and define
\begin{displaymath}
  \Lambda = \left\{ (k,\ell) \in \N^2 \; : \;
    \km \leq \sqrt{k^2+\ell^2} \leq \kp \right\} \; ,
\end{displaymath}
as well as independent and identically distributed random
normal variables~$c_{k,\ell}$ for $(k, \ell) \in \Lambda$,
with mean zero and variance one. Finally, define the random
Fourier cosine sum in two dimensions by
\begin{displaymath}
  f(x, y) = \sum_{(k,\ell) \in \Lambda}
    c_{k,\ell} \cos(k \pi x) \cos(\ell\pi y) \; .
\end{displaymath}
In this new situation, the index set is a quarter annulus.
As before, the local extreme values can be modeled as
\begin{displaymath}
  y_m \sim \sum_{(k,\ell) \in \Lambda_1} |c_{k,\ell}|
    \sin(\pi d_{k,\ell}) \sin(\pi \tilde{d}_{k,\ell}) -
    \sum_{(k,\ell) \in \Lambda_2} |c_{k,\ell}|
    \sin(\pi d_{k,\ell}) \sin(\pi \tilde{d}_{k,\ell}) \; ,
\end{displaymath}
where~$\Lambda_1$ and~$\Lambda_2$ denote the modes for which the
product $c_{k,\ell} \cos(k \pi x) \cos(\ell \pi y)$ is strictly
positive or negative, respectively, were~$(x,y) \in G$ is the 
location of a local extremum. Moreover, the random variables~$d_{k,\ell}$
and~$\tilde{d}_{k,\ell}$ are uniformly distributed in~$[0,1]$. Through
elementary calculations similar to the ones in the one-dimensional
case one obtains
\begin{displaymath}
  y_m \sim \mathcal{N}(\mu, \sigma)
  \quad\mbox{ with }\quad
  \mu = \left( |\Lambda_1| - |\Lambda_2| \right)
    \left( \frac{2}{\pi}\right)^{5/2}
  \quad\mbox{ and }\quad
  \sigma^2 = |\Lambda| \left( \frac{3}{8} - \frac{2}{\pi^2} -
    \frac{2}{\pi^3} \right) \; .
\end{displaymath}
As before we assume that $|\Lambda_1| - |\Lambda_2| = 2 M -
|\Lambda| \sim 2 \operatorname{Bin}(|\Lambda|,0.5) -
|\Lambda| \sim \mathcal{N}(0, |\Lambda|/4)$. In addition,
the number of modes contained in~$\Lambda$ is now asymptotically
given by
\begin{displaymath}
  |\Lambda| \sim
  \frac{\pi \left( (\alphap)^2-(\alpham)^2 \right)}{4 \eps^2} \; .
\end{displaymath}
Analogously to the one-dimensional proceeding, one can then derive
\begin{displaymath}
  \frac{y_m}{\left\| f^{\text{\sc Model}} \right\|_{L^2(G)}}
  \sim \mathcal{N}(0, C) \; ,
\end{displaymath}
where~$C$ denotes some $\eps$-independent constant, and the number
of extrema is empirically given by
\begin{displaymath}
  \bar k^2 = \frac{(\kp+\km)^2}{4} =
  \frac{(\alphap+\alpham)^2}{4\eps^2} \; .
\end{displaymath}
Using the highest possible mode frequency we can now say that the
number of extrema is bounded by $({k^\oplus})^2 \sim (\alpha^\oplus /
\eps)^2$. Hence, the probability distribution can be derived as above
as
\begin{displaymath}
  \P\left( \max_{m=1,\ldots,\bar k^2}
    \frac{|y_m|}{\left\| f^{\text{\sc Model}} \right\|_{L^2(G)}}
    < t \right) >
  \left( \int_{-\infty}^t C_1 e^{-s^2 / C_2} \, ds
    \right)^{(\alphap)^2 / \eps^2} \; ,
\end{displaymath}
and as in Section~\ref{sec:result}, this yields an estimate for
the $L^\infty(G)$-norm growth rate in two dimensions, if one chooses
$t = C \log\eps^{-1}$. This implies
\begin{displaymath}
  \lim_{\eps \to 0} \P\left( \frac{\left\| f^{\text{\sc Model}}
    \right\|_{L^\infty(G)}}{\left\| f^{\text{\sc Model}}
    \right\|_{L^2(G)}} < C \log\eps^{-1} \right) = 1 \; ,
\end{displaymath}
where we assumed that $\|\db{f^{\text{\sc Model}}}\|_{L^2(G)} = C \eps^{-1}$ is roughly
constant also in two dimensions.
\subsection*{Acknowledgements}
D.B.\ would like to thank Vitaly Wachtel for fruitful discussions about
Theorem \ref{thm:main}. P.W.\ is thankful for the funding he received
by Cusanuswerk. Finally, T.W.\ was partially supported by NSF grants
DMS-1114923 and DMS-1407087.

%
%

\addcontentsline{toc}{section}{References}
\footnotesize
\bibliographystyle{plain}
\bibliography{references}

\begin{thebibliography}{10}

\bibitem{aurich:etal:99a}
R.~Aurich, A.~B\"acker, R.~Schubert, and M.~Taglieber.
\newblock Maximum norms of chaotic quantum eigenstates and random waves.
\newblock {\em Physica D}, 129:1--14, 1999.

\bibitem{bloemker:etal:01b}
Dirk Bl\"omker, Stanislaus Maier-Paape, and Thomas Wanner.
\newblock Spinodal decomposition for the {C}ahn-{H}illiard-{C}ook equation.
\newblock {\em Communications in Mathematical Physics}, 223(3):553--582, 2001.

\bibitem{bloemker:etal:05a}
Dirk Bl\"omker, Stanislaus Maier-Paape, and Thomas Wanner.
\newblock Phase separation in stochastic {C}ahn-{H}illiard models.
\newblock In Alain Miranville, editor, {\em Mathematical Methods and Models in
  Phase Transitions}, pages 1--41. Nova Science Publishers, New York, 2005.

\bibitem{bloemker:etal:08a}
Dirk Bl\"omker, Stanislaus Maier-Paape, and Thomas Wanner.
\newblock Second phase spinodal decomposition for the {C}ahn-{H}illiard-{C}ook
  equation.
\newblock {\em Transactions of the American Mathematical Society},
  360(1):449--489, 2008.

\bibitem{cahn:59a}
John~W. Cahn.
\newblock Free energy of a nonuniform system. {II}. {T}hermodynamic basis.
\newblock {\em Journal of Chemical Physics}, 30:1121--1124, 1959.

\bibitem{cahn:hilliard:58a}
John~W. Cahn and J.~E. Hilliard.
\newblock Free energy of a nonuniform system {I}. {I}nterfacial free energy.
\newblock {\em Journal of Chemical Physics}, 28:258--267, 1958.

\bibitem{cook:70a}
H.~Cook.
\newblock Brownian motion in spinodal decomposition.
\newblock {\em Acta Metallurgica}, 18:297--306, 1970.

\bibitem{courant:hilbert:53a}
Richard Courant and David Hilbert.
\newblock {\em Methods of Mathematical Physics}.
\newblock Intersciences, New York, 1953.

\bibitem{daprato:debussche:96a}
Giuseppe Da~Prato and Arnaud Debussche.
\newblock Stochastic {C}ahn-{H}illiard equation.
\newblock {\em Nonlinear Analysis. Theory, Methods \& Applications},
  26(2):241--263, 1996.

\bibitem{daprato:zabczyk:14a}
Giuseppe {Da Prato} and Jerzy {Zabczyk}.
\newblock {\em {Stochastic Equations in Infinite Dimensions}}.
\newblock Cambridge University Press, Cambridge, second edition, 2014.

\bibitem{desi:sander:wanner:06a}
Jonathan~P. Desi, Evelyn Sander, and Thomas Wanner.
\newblock Complex transient patterns on the disk.
\newblock {\em Discrete and Continuous Dynamical Systems, Series A},
  15(4):1049--1078, 2006.

\bibitem{kahane:85a}
Jean-Pierre Kahane.
\newblock {\em Some Random Series of Functions}.
\newblock Cambridge University Press, Cambridge -- London -- New York, second
  edition, 1985.

\bibitem{karlin:68a}
Samuel Karlin.
\newblock {\em Total Positivity}.
\newblock Stanford University Press, Stanford, 1968.

\bibitem{langer:71a}
J.~S. Langer.
\newblock Theory of spinodal decomposition in alloys.
\newblock {\em Annals of Physics}, 65:53--86, 1971.

\bibitem{maier:wanner:98a}
Stanislaus Maier-Paape and Thomas Wanner.
\newblock Spinodal decomposition for the {C}ahn-{H}illiard equation in higher
  dimensions. {P}art {I}: {P}robability and wavelength estimate.
\newblock {\em Communications in Mathematical Physics}, 195(2):435--464, 1998.

\bibitem{maier:wanner:00a}
Stanislaus Maier-Paape and Thomas Wanner.
\newblock Spinodal decomposition for the {C}ahn-{H}illiard equation in higher
  dimensions: {N}onlinear dynamics.
\newblock {\em Archive for Rational Mechanics and Analysis}, 151(3):187--219,
  2000.

\bibitem{runst:sickel:96a}
Thomas Runst and Winfried Sickel.
\newblock {\em Sobolev Spaces of Fractional Order, {N}emytskij Operators, and
  Nonlinear Partial Differential Equations}.
\newblock Walter de Gruyter, Berlin, 1996.

\bibitem{sander:wanner:99a}
Evelyn Sander and Thomas Wanner.
\newblock Monte {C}arlo simulations for spinodal decomposition.
\newblock {\em Journal of Statistical Physics}, 95(5--6):925--948, 1999.

\bibitem{sander:wanner:00a}
Evelyn Sander and Thomas Wanner.
\newblock Unexpectedly linear behavior for the {C}ahn-{H}illiard equation.
\newblock {\em SIAM Journal on Applied Mathematics}, 60(6):2182--2202, 2000.

\bibitem{wanner:04a}
Thomas Wanner.
\newblock Maximum norms of random sums and transient pattern formation.
\newblock {\em Transactions of the American Mathematical Society},
  356(6):2251--2279, 2004.

\end{thebibliography}
\end{document}